\title[Pure subrings, endomorphism rings and Frobeniuses]{Pure subrings of regular local rings, endomorphism rings and Frobenius morphisms}
\author{Takehiko Yasuda}
\address{Department of Mathematics and Computer Science, 
Kagoshima University, 1-21-35 Korimoto, Kagoshima 890-0065, Japan}
\email{yasuda@sci.kagoshima-u.ac.jp}
\thanks{This research  was supported by  Grants-in-Aid for Scientific Research (22740020).}
\begin{document}

\maketitle

\begin{abstract}
The aim of this paper is threefold: first, to prove that the endomorphism ring
associated to a pure subring of a regular local ring is a noncommutative crepant resolution
if it is maximal Cohen-Macaulay; second, to see that in that situation, a different, but  Morita equivalent,
noncommutative crepant resolution can be constructed by using Frobenius morphisms;
finally, to study the relation
between Frobenius morphisms of noncommutative rings and the finiteness of global dimension. 
As a byproduct, we will obtain a result on wild quotient singularities: 
If the smooth cover of a wild quotient singularity
is unramified in codimension one, then the singularity is not strongly
F-regular.
\end{abstract}


\section{Introduction}

\subsection{Pure subrings of regular local rings and NCCRs (Section \ref{sec-NCRs-pure})}

After Van den Bergh \cite{VandenBergh:2002tm}  introduced the notion of NCCR (noncommutative crepant resolution),
its importance is now well recognized. 
A nice reference of the theory is Leuschke's recent survey article \cite{Leuschke:2011te}.
The most basic example is the following: 

\begin{ex}\label{ex-intro-tame-quot}
Let $S:= k[[x_{1},\dots,x_{d}]]$, a power series ring over a field $k$, 
 $G \subset GL_{d}(k)$ a small finite subgroup with $ \mathrm{char}(k) \nmid \sharp G$,
 and $R := S^{G}$  the invariant subring.  Then 
the endomorphism ring   of the $R$-module $S$, $\End_{R}(S)$, is isomorphic to the skew group ring 
$S*G$ and so a NCCR. Namely it has finite global dimension and is a MCM (maximal Cohen-Macaulay) $R$-module.
(Unlike the original definition of NCCR by Van den Bergh, we do not assume that the base ring $R$ is Gorenstein.)
\end{ex}

We are interested in generalizing this example. In particular, we would like to investigate the following problem:

\begin{prob}\label{prob-intro-motivating}
Let $R \subset S$ be a module-finite  and pure extension of commutative domains such that they are both
Noetherian, complete, local and normal, and moreover $S$ is regular.
Then is $\End_{R}(S)$ a NCCR?
\end{prob}

Our main motivation for this problem is 
 the situation in 
positive characteristic. Then the extension $R \subset S$ can be purely inseparable
and we have no Galois group $G$ and cannot use the isomorphism $\End_{R}(S)\cong S*G$ 
to show that $\End_{R}(S)$ is a NCCR. 

The purity condition in the problem does not matter in characteristic zero, 
because every module-finite extension of normal domains is pure.
As Corollary \ref{cor-byproduct} shows, the condition is necessary in positive characteristic. 

Apart from Example \ref{ex-intro-tame-quot},  the answer to the problem is known to be positive in dimension two. 
Indeed, in this case, $\End_{R}(S)$
 is MCM, as every reflexive module is MCM.
 Moreover
$S$ contains every indecomposable MCM $R$-module as a direct summand 
\cite{Anonymous:xoYdZhOA}.
Then $\End_{R}(S)$ has finite global dimension from the following theorem:

\begin{thm}[\cite{Leuschke:2007va}]\label{thm-intro-Leuschke}
Let $S$ be a commutative Noetherian complete local CM ring
and $M$ a finitely generated MCM $S$-module
which includes every indecomposable MCM $S$-module as a direct summand.
Then $\End_{S}(M)$ has finite global dimension.
\end{thm}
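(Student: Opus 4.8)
The plan is to reduce the computation to the projective dimensions of the simple modules and to resolve those simples via Auslander--Reiten theory. Write $\Gamma := \End_{S}(M)$ and fix a decomposition $M = \bigoplus_{i=0}^{n} M_{i}$ into indecomposables with $M_{0} = S$ the free summand; by the hypothesis and the Krull--Schmidt theorem (available since $S$ is complete local), the $M_{i}$ are, up to isomorphism, all of the indecomposable MCM $S$-modules, so $S$ has finite CM type. Since $\Gamma$ is module-finite over the complete local ring $S$, it is semiperfect, and therefore $\operatorname{gl.dim}\Gamma = \sup_{i}\operatorname{pd}_{\Gamma} T_{i}$, where $T_{i}$ is the simple $\Gamma$-module associated to $M_{i}$. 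Thus it suffices to bound each $\operatorname{pd}_{\Gamma}T_{i}$.

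The main tool is the functor $F := \operatorname{Hom}_{S}(M,-)$, which restricts to an equivalence from $\operatorname{add}(M)$ (that is, the category of all MCM $S$-modules) onto the category of finitely generated projective $\Gamma$-modules, carrying $M_{i}$ to the indecomposable projective $P_{i}$ with top $T_{i}$. Consequently, a finite exact sequence of MCM modules whose image under $F$ is exact with cokernel $T_{i}$ yields at once a finite projective $\Gamma$-resolution of $T_{i}$.

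For the simples attached to the non-free indecomposables I would use almost split sequences. Here I invoke Auslander's theorem that finite CM type forces $S$ to be an isolated singularity, so that the category of MCM $S$-modules admits almost split sequences. Given $i \ge 1$, let $0 \to \tau M_{i} \to E_{i} \to M_{i} \to 0$ be the almost split sequence ending in $M_{i}$. The defining right-almost-split property says precisely that the radical maps $M \to M_{i}$ are those factoring through $E_{i}$, so applying the left exact functor $F$ produces an exact sequence
\[
0 \to F(\tau M_{i}) \to F(E_{i}) \to F(M_{i}) \to T_{i} \to 0 .
\]
Since $\tau M_{i}$, $E_{i}$ and $M_{i}$ are MCM, the first three terms are projective, whence $\operatorname{pd}_{\Gamma}T_{i} \le 2$.

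The remaining case, the simple $T_{0}$ attached to the free summand $M_{0}=S$, is the main obstacle: there is no almost split sequence ending in the projective object $S$, so $T_{0}$ must be resolved directly. Here I would fix a maximal $S$-regular sequence $x_{1},\dots,x_{d}$ ($d = \dim S$) and build a length-$d$ complex of free, hence MCM, modules whose image under $F$ resolves $T_{0}$, giving $\operatorname{pd}_{\Gamma}T_{0}\le d$. The delicate points, and the reason $\dim S$ enters the final bound, are to verify that this complex remains exact after applying the merely left exact functor $F$ and that its cokernel is exactly $T_{0}$; the isolated-singularity hypothesis is what makes this work, since $S$ is regular on the punctured spectrum, $M$ is locally free there, and hence $F$ is exact away from the closed point, forcing any homology to have finite length where a depth count can control it. Combining the two cases gives $\operatorname{gl.dim}\Gamma \le \max\{2,d\} < \infty$, which is the assertion.
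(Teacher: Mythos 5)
Your reduction to the simple $\Gamma$-modules and your treatment of the simples attached to the non-free indecomposables are correct, and they follow a genuinely different route from the paper's. The paper proves this statement as the special case $R=S$ of Theorem \ref{thm-pure-finite-gldim-main}: it resolves an \emph{arbitrary} $\Gamma$-module uniformly, transporting the first $\max\{d,2\}$ steps of a projective resolution back into $\add(M)$ through the projectivization equivalence and using the depth lemma to see that the resulting syzygy is again MCM, hence projective after applying $\Hom_{S}(M,-)$; no almost split sequences and no isolated-singularity theorem are needed. Your argument buys the sharper bound $\projdim_{\Gamma}T_{i}\le 2$ for the non-free simples, at the price of invoking Auslander's theorems (finite CM type implies isolated singularity, and existence of almost split sequences in $\CM(S)$), and for those simples it is sound.

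The gap is exactly where you flag it, at $T_{0}$, and the construction you sketch cannot work. First, the cokernel is wrong: $T_{0}=\Coker\bigl(F(\fm)\hookrightarrow F(S)\bigr)$ where $F=\Hom_{S}(M,-)$, whereas applying $F$ to the Koszul complex $K_{\bullet}(x_{1},\dots,x_{d};S)$ produces in degree zero the cokernel $\Hom_{S}(M,S)/(\mathbf{x})\Hom_{S}(M,S)$, which has length $>1$ in general since $(\mathbf{x})F(S)\subsetneq F(\fm)$. Second, no complex of \emph{free} $S$-modules can resolve $T_{0}$: the minimal resolution of $T_{0}$ continues with a projective cover of $F(\fm)=\Hom_{S}(M,\fm)$, i.e.\ with $F(N)$ for $N$ a minimal right $\add(M)$-approximation of $\fm$, and $N$ is not free in general — already for a two-dimensional quotient singularity the minimal resolution has the shape $0\to P_{0}\to\bigoplus_{i\ne 0}P_{i}^{\oplus a_{i}}\to P_{0}\to T_{0}\to 0$ with the $a_{i}$ read off the McKay quiver. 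Third, even the exactness of $F(K_{\bullet})$ requires $\mathbf{x}$ to be a regular sequence on $M^{*}=\Hom_{S}(M,S)$, i.e.\ $M^{*}$ to be MCM, which can fail when $S$ is not Gorenstein. The honest way to close the case of $T_{0}$ is to run the paper's argument on the minimal $\Gamma$-projective resolution of $T_{0}$ itself: pull it back to an exact complex in $\add(M)$ via the inverse equivalence $\Hom_{\Gamma}(F(S),-)$ (exact because $F(S)$ is projective), observe by the depth lemma that the $\max\{d,2\}$-th kernel is MCM and hence lies in $\add(M)$, and apply $F$ again. Once you do that, the same argument handles every module at once and the detour through Auslander--Reiten theory becomes optional.
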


In dimension $\ge 3$, 
 one can hardly expect that  a ring $R$ with a regular covering $S$ has only finitely many indecomposable MCM modules.
However we will generalize Theorem  \ref{thm-intro-Leuschke}    as follows: 
\begin{thm}[Theorem \ref{thm-pure-finite-gldim-main}]
Let $S$ and $M$ be as in Theorem \ref{thm-intro-Leuschke}. 
Suppose that $R \subset S$ is a pure subring such that $S$ is a finitely generated $R$-module.
Suppose also that $\Hom_{R}(S,M)$ is MCM. Then $\End_{R}(M)$ has finite global dimension.
\end{thm}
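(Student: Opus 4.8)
The plan is to reduce the claim to Leuschke's Theorem \ref{thm-intro-Leuschke} by transporting the finite global dimension of $\Lambda := \End_{S}(M)$ to $\Gamma := \End_{R}(M)$ along the adjunction between restriction of scalars and coinduction $\Hom_{R}(S,-)$ for the finite extension $R \subset S$.

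First I would collect the consequences of purity. Since $R \subset S$ is pure and $S$ is a finitely generated $R$-module, the inclusion splits as a map of $R$-modules, so $R$ is an $R$-module direct summand of $S$; in particular $R$ is CM and $M$ is a direct summand of $\widetilde{M} := \Hom_{R}(S,M)$ as $R$-modules. Next, the standing hypothesis that $\widetilde{M}$ is MCM, combined with the assumption that $M$ contains every indecomposable MCM $S$-module as a direct summand, forces (by Krull--Schmidt over the complete local ring $S$) the containment $\widetilde{M} \in \operatorname{add}_{S} M$. More precisely, decomposing $M$ into indecomposable $R$-modules and applying $\Hom_{R}(S,-)$, the same argument shows that the coinduction $\Hom_{R}(S,N)$ of every indecomposable $R$-summand $N$ of $M$ is MCM over $S$ and hence lies in $\operatorname{add}_{S} M$.

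The key structural input is the Hom--coinduction adjunction, which gives a natural isomorphism $\Hom_{R}(M,N) \cong \Hom_{S}(M,\Hom_{R}(S,N))$, functorial in the $R$-module $N$. Taking $N = M$ yields a ring isomorphism $\Gamma \cong \Hom_{S}(M,\widetilde{M})$; since $\widetilde{M} \in \operatorname{add}_{S} M$, this exhibits $\Gamma$ as a projective right $\Lambda$-module and, via the Yoneda-type equivalence $\Hom_{S}(M,-) \colon \operatorname{add}_{S} M \xrightarrow{\sim} \operatorname{proj}\Lambda$, lets me compare resolutions of $\Gamma$-modules with resolutions of $\Lambda$-modules. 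Concretely, the indecomposable projective $\Gamma$-modules are the $\Hom_{R}(M,N)$ for indecomposable $R$-summands $N$ of $M$, and to bound the global dimension I would resolve the corresponding simple $\Gamma$-modules by building $\operatorname{add}_{R} M$-approximation sequences of the $N$'s and applying $\Hom_{R}(M,-)$.

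Finiteness of these resolutions is the crux, and I expect it to be the main obstacle. The clean statement I would aim for is that every $\Gamma$-module has finite projective dimension, reduced---through the functorial isomorphism above and the equivalence $\operatorname{add}_{S} M \simeq \operatorname{proj}\Lambda$---to the finite global dimension of $\Lambda$ furnished by Theorem \ref{thm-intro-Leuschke}, together with the Auslander--Buchweitz fact that, $S$ having finite CM type so that $\operatorname{add}_{S} M$ is the whole category of MCM $S$-modules, every finitely generated $S$-module admits a finite resolution by objects of $\operatorname{add}_{S} M$. The difficulty is that coinduction $\Hom_{R}(S,-)$ is not exact, because $S$ is typically not projective over $R$ in the (purely inseparable) situations of interest; its derived functors $\operatorname{Ext}^{i}_{R}(S,-)$ measure the failure, and a separability/splitting argument that would make the transport formal is exactly what is unavailable here. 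The role of the MCM hypothesis on $\widetilde{M}$---and, via the first step, on each $\Hom_{R}(S,N)$---is precisely to guarantee that the objects produced when resolving remain MCM, hence stay inside $\operatorname{add}_{S} M$, so that the comparison with the finite-length $\Lambda$-resolutions goes through and the relevant higher coinduction terms can be controlled. Establishing this exactness control is where the real work lies; once it is in place, the bound on the global dimension of $\Gamma$ follows from that of $\Lambda$.
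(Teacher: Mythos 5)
You have assembled the correct ingredients---the adjunction $\Hom_{R}(M,N)\cong\Hom_{S}(M,\Hom_{R}(S,N))$, the observation that the CM hypothesis forces $\Hom_{R}(S,N)\in\add({}_{S}M)$ for every $N\in\add({}_{R}M)$, and the splitting of $M$ off from $\Hom_{R}(S,M)$ coming from purity---but the proof is not complete: you explicitly leave unresolved the step you yourself identify as the crux, namely how to control the non-exactness of the coinduction $\Hom_{R}(S,-)$. The argument stops precisely where the real content lies. Note also a strategic mismatch: the paper does not quote the finite global dimension of $\End_{S}(M)$ (Theorem \ref{thm-intro-Leuschke}) as a black box and ``transport'' it; it reruns Leuschke's syzygy argument for $\End_{R}(M)$ directly, bounding the projective dimension of an arbitrary module by $\max\{d,2\}$.

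The missing idea is that one never needs $\Hom_{R}(S,-)$ to be exact on all of $\mods(R)$; one only needs exactness on the particular complexes that arise, and these are recognized as images of exact complexes under exact functors. Concretely: given a module $A$ over $\Lambda=\End_{R}(M)^{\op}$, take the first $e=\max\{d,2\}$ steps $P_{\bullet}$ of a projective resolution and pull back through the projectivization equivalence $\alpha=\Hom_{R}(M,-):\add({}_{R}M)\xrightarrow{\sim}\proj(\Lambda)$ to a complex $L_{\bullet}$ with terms in $\add({}_{R}M)$. Your adjunction gives a commutative square $\beta\circ\psi\cong\phi\circ\alpha$, where $\psi=\Hom_{R}(S,-)$, where $\beta=\Hom_{S}(M,-):\CM(S)\xrightarrow{\sim}\proj(\Gamma)$ with $\Gamma=\End_{S}(M)^{\op}$, and where $\phi$ is restriction of scalars along the subring $\Gamma\subset\Lambda$. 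Hence $\psi(L_{\bullet})\cong\beta^{-1}(\phi(P_{\bullet}))$, and the right-hand side is exact because $\phi$ is exact and $\beta^{-1}=\Hom_{\Gamma}(\beta(S),-)$ is Hom from a projective module. This is the exactness control you were seeking, and it costs nothing once the complex is seen to come from $\proj(\Gamma)$. The depth lemma then shows the $e$-th syzygy $K_{e}\cong\psi(L_{e})$ of this exact complex of MCM $S$-modules is itself MCM, hence lies in $\add({}_{S}M)$ because $M$ is a CM generator; purity splits $L_{e}$ off from ${}_{R}K_{e}=\Hom_{R}(S,L_{e})$, so $L_{e}\in\add({}_{R}M)$, and applying $\alpha$ yields a projective resolution of $A$ of length $e$. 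Without this (or an equivalent) mechanism, your reduction does not close.
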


As a corollary we obtain a partial answer of Problem \ref{prob-intro-motivating}:

\begin{cor}[Corollary \ref{cor-pure-subring-regular-CMEnd->NCCR}]
Let $R$ and $S$ be as in Problem \ref{prob-intro-motivating}.
If $\End_{R}(S)$ is MCM, then
it is a NCCR. 
\end{cor}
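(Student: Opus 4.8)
The plan is to obtain this as an immediate specialization of Theorem \ref{thm-pure-finite-gldim-main}, applied with the module $M$ chosen to be $S$ itself. Recall that the notion of NCCR in use here asks for two things: that $\End_{R}(S)$ be a maximal Cohen-Macaulay $R$-module, and that it have finite global dimension. The first is supplied as a hypothesis of the corollary, so the whole content to be proved is the finiteness of the global dimension, which is exactly what the general theorem delivers.

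First I would set $M := S$ and check that the pair $(S,M)$ satisfies the conditions imposed on $S$ and $M$ in Theorem \ref{thm-intro-Leuschke} (and hence in Theorem \ref{thm-pure-finite-gldim-main}). The crucial point is that $S$ is regular. By the Auslander--Buchsbaum formula, a finitely generated $S$-module is MCM if and only if its projective dimension is zero, i.e.\ if and only if it is free. Consequently the only indecomposable MCM $S$-module, up to isomorphism, is $S$ itself, and $M = S$, being free of rank one, is MCM and trivially contains every indecomposable MCM $S$-module as a direct summand. Thus $M = S$ is a legitimate representation generator in the sense required.

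Next I would translate the remaining hypothesis of Theorem \ref{thm-pure-finite-gldim-main}. With this choice, one has $\Hom_{R}(S,M) = \Hom_{R}(S,S) = \End_{R}(S)$, and this is MCM precisely by the standing assumption of the corollary. The extension $R \subset S$ is pure and module-finite by the hypotheses of Problem \ref{prob-intro-motivating}, so $S$ is a finitely generated $R$-module. Therefore all the hypotheses of the theorem are met, and it yields that $\End_{R}(M) = \End_{R}(S)$ has finite global dimension.

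Combining the two properties---MCM from the hypothesis and finite global dimension from Theorem \ref{thm-pure-finite-gldim-main}---shows that $\End_{R}(S)$ is a NCCR. I do not expect any genuine obstacle here, since the argument is a direct reduction; the single point that demands care is the observation that the regularity of $S$ forces every MCM $S$-module to be free, which is precisely what makes $S$ itself a representation generator and allows the general statement to specialize to this corollary.
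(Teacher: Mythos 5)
Your proposal is correct and is exactly the paper's argument: the paper likewise sets $M:=S$, notes that regularity of $S$ makes ${}_{S}S$ a CM generator, and invokes Theorem \ref{thm-pure-finite-gldim-main} with $\Hom_{R}(S,S)=\End_{R}(S)$ MCM by hypothesis. Your added justification via Auslander--Buchsbaum that every MCM module over a regular local ring is free is the right (and standard) reason behind the paper's one-line observation.
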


Thus the  remaining problem is:

\begin{prob}
With the assumption as in Problem \ref{prob-intro-motivating},
 is $\End_{R}(S)$ always MCM? Or, when is it?
\end{prob}

\subsection{A result on wild quotient singularities (Section \ref{sec-byproduct})}

Using the above result and Yi's theorem of the global dimension of 
a skew group ring \cite{Yi:1994jl}, we will prove the following result on wild quotient singularities:

\begin{cor}[Corollary \ref{cor-byproduct}]
Let  $S:=k[[x_{1},\dots,x_{d}]]$, $G \subset \Aut_{k} (S)$  a finite group of order divisible by 
the characteristic of $k$ and $R := S^{G}$.  Suppose that $S$ is unramified over $R=S^{G}$ in codimension one.
Then $R$ is not a pure subring of $S$
or strongly F-regular.
\end{cor}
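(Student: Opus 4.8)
The plan is to argue by contradiction, showing that if $R$ were a pure subring of $S$ then the skew group ring $S*G$ would simultaneously have finite and infinite global dimension; the non-purity then also rules out strong F-regularity. First I would identify the endomorphism ring with the skew group ring. The natural $R$-algebra homomorphism $S*G \to \End_{R}(S)$ sending $s\cdot g$ to the map $t \mapsto s\,g(t)$ is an isomorphism in codimension one precisely because $S$ is unramified over $R = S^{G}$ in codimension one, i.e.\ because $G$ acts freely outside a closed subset of codimension $\ge 2$. Since both $S*G$ and $\End_{R}(S)$ are reflexive $R$-modules and they agree in codimension one, the map is an isomorphism (Auslander's classical argument). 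Thus $\End_{R}(S) \cong S*G$, and as an $R$-module the latter is $\cong S^{\oplus \sharp G}$.

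Next I would verify that this endomorphism ring is MCM. Because $S$ is regular and $R \subset S$ is module-finite with $\dim R = \dim S = d$, the ideal $\mathfrak{m}_{R}S$ is $\mathfrak{m}_{S}$-primary, so $\mathrm{depth}_{R}S = \mathrm{depth}_{S}S = d = \dim R$; hence $S$, and therefore $S^{\oplus \sharp G} \cong \End_{R}(S)$, is MCM over $R$. Now suppose $R$ is a pure subring of $S$. Then the hypotheses of Corollary \ref{cor-pure-subring-regular-CMEnd->NCCR} are met: $R \subset S$ is a module-finite pure extension of complete local normal domains, $S$ is regular, and $\End_{R}(S)$ is MCM. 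The corollary then gives that $\End_{R}(S) \cong S*G$ is a NCCR, and in particular has finite global dimension.

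The contradiction comes from Yi's theorem \cite{Yi:1994jl}: the global dimension of $S*G$ can be finite only if the inertia group of every prime has order invertible in the corresponding residue field. At the maximal ideal $\mathfrak{m}$ of $S$, every element of $G \subset \Aut_{k}(S)$ fixes $k$ and so acts trivially on $S/\mathfrak{m} = k$; the inertia group there is therefore all of $G$, whose order is divisible by $\mathrm{char}(k)$ and hence not invertible in $k$. Consequently $S*G$ has infinite global dimension, contradicting the previous paragraph. This shows that $R$ cannot be a pure subring of $S$. Finally, since a strongly F-regular ring is a splinter and thus a direct summand, in particular a pure subring, of every module-finite extension, strong F-regularity of $R$ would force $R$ to be pure in $S$; as this is impossible, $R$ is not strongly F-regular either.

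The main obstacle I expect lies in making the two structural inputs precise. The first is the reflexivity and codimension-one comparison that upgrade the map $S*G \to \End_{R}(S)$ to a genuine isomorphism under the unramifiedness hypothesis, which must be checked carefully in the complete local setting. The second, and more delicate, is pinning down the exact form of Yi's criterion and confirming that the inertia group at the closed point is genuinely all of $G$; this is the step that converts the modular hypothesis $\mathrm{char}(k) \mid \sharp G$ into infinitude of the global dimension and thereby drives the whole argument.
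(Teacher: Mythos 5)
Your proposal is correct and follows essentially the same route as the paper: it deduces from purity and Corollary \ref{cor-pure-subring-regular-CMEnd->NCCR} that $S*G\cong\End_{R}(S)$ (which is MCM, being $S^{\oplus\sharp G}$) would be a NCCR with finite global dimension, contradicts this with Yi's criterion since the inertia group at the closed point is all of $G$ and $p\mid\sharp G$, and then eliminates strong F-regularity via the splinter property. The paper packages this as the implications $(4)\Rightarrow(1)$ and $(5)\Rightarrow(4)$ among five equivalent conditions, but the logical content is identical.
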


\begin{rem}
Several cases are known where the invariant ring $S^{G}$ is not Cohen-Macaulay,
hence nor F-regular. For instance, it is the case if $G \cong \ZZ/ p^{n} \ZZ$, the action is linear
and the fixed point locus has codimension $\ge 3$ \cite{Ellingsrud:1980wy}. 
\end{rem}

\subsection{NCCRs via Frobeniuses (Section \ref{sec-NCCRs-Frob})}

We are interested also in the role of Frobenius morphism in the theory of 
 noncommutative (crepant) resolution. (By a NCR (noncommutative resolution), we mean 
 an endomorphism ring $\End_{R}(M)$ having finite global dimension.)
Firstly an interesting problem similar to  Problem \ref{prob-intro-motivating} is:

\begin{prob}\label{prob-frob-NCCR}
Let $R$ be a commutative ring of characteristic $p>0$ such that its Frobenius map $R \hookrightarrow R^{1/p}$ is finite. Then one can consider the endomorphism ring $\End_{R}(R^{1/p^{e}})$ associated to the $e$-iterated Frobenius 
$R \hookrightarrow R^{1/p^{e}}$. When is it  a NC(C)R?
 \end{prob}
 
This is of particular intereset, because $\End_{R}(R^{1/p^{e}})\cong\End_{R^{p^{e}}}(R)$
and 
\[
D(R) := \bigcup_{e\ge 0} \End_{R^{p^{e}}}(R)
\]
 is the ring of differential operators on $R$.
 If for sufficiently large $e$'s, $\End_{R}(R^{1/p^{e}})$ have global dimensions bounded from above,
 then from \cite{Bervsteuin:1958wx}, $D(R)$ also has finite global dimension. 
The  ring $\End_{R}(R^{1/p^{e}})$ is also closely related to the F-blowup \cite{Toda:2009bw}.
 
The following is our partial answer to Problem \ref{prob-frob-NCCR}:
\begin{thm}[Corollary \ref{cor-generator-by-Frobenius}]
Let $S$ and $R$ be as in Problem \ref{prob-intro-motivating}.
Suppose that $\End_{R}(S)$ is MCM.
Then for sufficiently large $e$, $\End_{R}(R^{1/p^{e}})$ is a NCCR and Morita equivalent to $\End_{R}(S)$.
\end{thm}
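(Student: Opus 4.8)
The plan is to prove the stronger statement that $\operatorname{add}_R(R^{1/p^e}) = \operatorname{add}_R(S)$ for all sufficiently large $e$, and then to read off both conclusions formally. First I would record that $R$ is strongly F-regular: being a pure, hence (as $R$ is complete local and $S$ is module-finite over it) split, subring of the regular F-finite ring $S$, it is a direct summand of a strongly F-regular ring and therefore strongly F-regular, in particular F-split. F-splitness means that $R \hookrightarrow R^{1/p}$ is $R$-split; applying the exact functor $(-)^{1/p^e}$ shows that $R^{1/p^e}$ is an $R$-direct summand of $R^{1/p^{e+1}}$, so we obtain an increasing chain $\operatorname{add}_R(R) \subseteq \operatorname{add}_R(R^{1/p}) \subseteq \cdots$.

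Next I would bound this chain inside $\operatorname{add}_R(S)$. Since $R \hookrightarrow S$ is pure, so is $R^{1/p^e} \hookrightarrow S^{1/p^e}$; as $R^{1/p^e}$ is again complete local and $S^{1/p^e}$ is module-finite over it, purity gives an $R^{1/p^e}$-, hence $R$-, splitting, so $R^{1/p^e}$ is an $R$-summand of $S^{1/p^e}$. By Kunz's theorem $S^{1/p^e}$ is $S$-free, hence $S^{1/p^e} \cong S^{\oplus q_e}$ as $R$-modules, and therefore $R^{1/p^e} \in \operatorname{add}_R(S)$ for every $e$. Because $R$ is complete local, the category of finite $R$-modules is Krull--Schmidt and $\operatorname{add}_R(S)$ has only finitely many indecomposables, so the increasing chain stabilizes: $\operatorname{add}_R(R^{1/p^e}) = \mathcal{A}$ for $e \geq e_0$, where $\mathcal{A} \subseteq \operatorname{add}_R(S)$ is closed under $(-)^{1/p}$ and direct summands. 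In passing, this shows that $R$ has finite F-representation type.

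The heart of the argument, and the step I expect to be the main obstacle, is the reverse inclusion $\mathcal{A} = \operatorname{add}_R(S)$, i.e. that every indecomposable $R$-summand of $S$ occurs in $R^{1/p^e}$ for $e \gg 0$. My plan is to produce, for large $e$, an $R$-split surjection $(R^{1/p^e})^{\oplus N} \twoheadrightarrow S$: starting from a presentation $R^{\oplus N}\twoheadrightarrow S$, I would apply $(-)^{1/p^e}$ and compose with an $R$-splitting of $S\hookrightarrow S^{1/p^e}$ to get a surjection, and then use the strong F-regularity of $R$ (the splitting, or trivial tight closure, property, controlling the relevant $\operatorname{Ext}$ obstruction) to split it for $e$ large. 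Equivalently, one may decompose $\operatorname{Frac}(R)\subseteq\operatorname{Frac}(S)$ into its separable and purely inseparable parts and treat them separately: the purely inseparable part lands $S$ inside $S_{\mathrm{s}}^{1/p^{f}}$, where $S_{\mathrm{s}}$ is the normalization of $R$ in the separable closure of $\operatorname{Frac}(R)$ in $\operatorname{Frac}(S)$, and is captured by Frobenius roots directly, while the separable (generically \'etale) part is handled by F-splitting, exactly as in the tame quotient case where the summands of $R^{1/p^e}$ are the McKay modules. The delicate point is the positivity, namely that the multiplicity of each summand is eventually nonzero; I would control this through the F-signatures of the relevant summands rather than a bare rank count, since ranks alone are too weak (one checks $\operatorname{rank}_R R^{1/p^e} = \operatorname{rank}_S S^{1/p^e}$, which leaves too much room).

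Finally I would assemble the conclusion. Once $\operatorname{add}_R(R^{1/p^e}) = \operatorname{add}_R(S)$, writing each of $S$ and $R^{1/p^e}$ as a summand of a power of the other shows that $\End_R(R^{1/p^e})$ is an $R$-module summand of a power of $\End_R(S)$ and conversely, and that the two endomorphism rings are Morita equivalent. By the hypothesis that $\End_R(S)$ is MCM together with Corollary~\ref{cor-pure-subring-regular-CMEnd->NCCR}, $\End_R(S)$ is a NCCR; finite global dimension is a Morita invariant and MCM-ness over $R$ passes to summands, so $\End_R(R^{1/p^e})$ is MCM of finite global dimension, that is, a NCCR, and Morita equivalent to $\End_R(S)$, as claimed.
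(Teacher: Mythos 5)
Your overall architecture matches the paper's: establish $\add({}_{R}R^{1/p^{e}})=\add({}_{R}S)$ for $e\gg 0$ and then read off the Morita equivalence and the NCCR property formally. The easy inclusion ($R^{1/p^{e}}$ is an $R$-summand of $S^{1/p^{e}}\cong S^{\oplus q_{e}}$ by purity and Kunz, so $\add({}_{R}R^{1/p^{e}})\subseteq\add({}_{R}S)$, with the chain increasing by F-splitting) and the final assembly (Morita equivalence from equality of additive closures, MCM-ness passing to summands, Corollary \ref{cor-pure-subring-regular-CMEnd->NCCR} for finite global dimension) are correct and agree with the paper. But the step you yourself flag as ``the heart of the argument'' --- that $S\in\add({}_{R}R^{1/p^{e}})$ for $e\gg 0$ --- is not proved. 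You offer three alternative plans (splitting a surjection $(R^{1/p^{e}})^{\oplus N}\twoheadrightarrow S$ by ``controlling the relevant $\operatorname{Ext}$ obstruction'', a separable/purely-inseparable d\'evissage, and F-signature positivity), none of which is carried out, and the last of which is essentially a restatement of what must be shown. The clearest symptom of the gap is that none of these sketches uses the hypothesis that $\End_{R}(S)$ is MCM, whereas that hypothesis enters the paper's proof of precisely this step in an essential way.

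The paper's argument (Proposition \ref{prop-Hara}, following Hara) runs as follows: since $\add({}_{R}S)$ has only finitely many indecomposables, all torsion-free, one can choose $0\neq c\in R$ such that multiplication by $c$ on every $N\in\add({}_{R}S)$ factors through a free $R$-module. The MCM hypothesis on $\End_{R}(S)$ forces $S^{*}=\Hom_{R}(S,R)$ to be MCM, which in turn places $\Hom_{R}(S^{*},R^{1/q})$ in $\add({}_{R^{1/q}}S^{1/q})$, so that multiplication by $c^{1/q}$ on it factors through a free $R^{1/q}$-module $(R^{1/q})^{\oplus m}$. Strong F-regularity supplies a $q$ for which $R\to R^{1/q}$, $1\mapsto c^{1/q}$, splits; applying $\Hom_{R}(S^{*},-)$ and using reflexivity of $S$ gives a split injection $S\to\Hom_{R}(S^{*},R^{1/q})$ that factors through $(R^{1/q})^{\oplus m}$, whence $S\in\add({}_{R}R^{1/q})$. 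By contrast, splitting your chosen surjection $(R^{1/p^{e}})^{\oplus N}\twoheadrightarrow S$ would require killing a class in $\operatorname{Ext}^{1}_{R}(S,\,\cdot\,)$ that strong F-regularity does not obviously control, and your rank/F-signature remarks do not supply the needed positivity. Until the reverse inclusion is actually established --- with the MCM hypothesis on $\End_{R}(S)$ doing real work --- the proof is incomplete.
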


In fact, the core of our proof is due to Hara \cite{Anonymous:xoYdZhOA}, who proved 
the corresponding result in dimension two.
Also 
Toda and the author \cite{Toda:2009bw} previously proved the theorem in the case of tame quotient singularities.
Their proof uses a result from the representation theory,
while Hara's argument replaces it with an elegant ring-theoretic argument.

We should notice that $\End_{R}(R^{1/p^{e}})$ is not  a NCCR for a general singularity $R$.
Indeed Dao proved \cite{Dao:2010vd} that if $R$ is a local complete intersection which is regular in codimension two,
then $\End_{R}(R^{1/p^{e}})$ is not a NCCR for any $e$. 

\subsection{Noncommutative Frobeniuses and global dimensions (Sections \ref{sec-NC-Koh-Lee} and \ref{sec-NC-Frob-gldim})}

For commutative rings or schemes, the Frobenius morphism has been exploited as
a tool to study singularities: Kunz's characterization of regularity (smoothness) \cite{Kunz:1969to}, the study of F-singularities (see \cite{Anonymous:OoDKDTwF}) among others.
In \cite{Yasuda:2009wn}, the author defined 
the Frobenius morphism of the endomorphism ring of a module over a commutative ring. 
In Section \ref{sec-NC-Frob-gldim},
we will also define the Frobenius morphism of the skew group ring associated to 
a commutative ring and a finite group in an obvious way.
Then we will study relation between the finiteness of global dimension and the flatness of Frobenius morphism.
For commutative rings,  both properties are characterizations of regularity and
 Herzog \cite{Herzog:1974vk} gave a direct proof that the latter implies the former.
Koh and Lee  \cite{Koh:1998cp} refined his result and proved certain constraint which the minimal resolution
of every module satisfies. We will prove the noncommutative version of Koh and Lee's result (Theorem \ref{thm-koh-lee}).

However, in order to obtain finite global dimension using this, the flatness of Frobenius morphism
is not sufficient unlike the commutative case.
Then we will axiomize the properties of Frobenius morphism which are necessary to 
deduce finite global dimension. Moreover if the relevant noncommutative ring is $\End_{R}(M)$,
then we will relate the properties with those of $M$ in terms of the Frobenius of $R$.

\subsection{Convention}\label{subsec-convention}

Throughout the paper, $R$ denotes some commutative Noetherian local complete  domain of Krull dimension $d$.
Unless otherwise noted, a \emph{ring} means a (commutative or noncommutative) $R$-algebra which is a finitely generated torsion-free
$R$-module. Thus every commutative ring has Krull dimension $d$.

A \emph{module} means a finitely generated left module unless otherwise noted. 
Then the category of modules over a ring has the Krull-Schmidt property: Every module uniquely decomposes 
into the direct sum of indecomposable modules.
A ring or module is called \emph{maximal Cohen-Macaulay} or simply \emph{Cohen-Macaulay} (for short, CM)
 if they are maximal Cohen-Macaulay $R$-modules. Similarly a ring or module is called \emph{reflexive} if it is a reflexive $R$-module. The notation ${}_{R}M$  (resp.\ $M_{R}$) means that $M$ is a left (resp.\ right) $R$-module.

We denote the category of $R$-modules by $\mods(R)$
and subcategories of projective (resp.\ CM, reflexive) $R$-modules by 
$\proj(R)$ (resp.\ $\CM(R)$, $\refls(R)$).
A sequence of modules in such a subcategory is said to be \emph{exact} if
it is exact in the ambient category $\mods(R)$. 
A functor between such subcategories is said to be \emph{exact} if it preserves exact sequences.

\subsection{Acknowledgments}

I would like to thank Nobuo Hara, Colin Ingalls, Osamu  Iyama, Hiroyuki Nakaoka, Tadakazu Sawada
 and Shunsuke Takagi for useful discussions.

\section{NCRs of  pure subrings}\label{sec-NCRs-pure}

\subsection{Preliminaries}

First of all we recall some basic notions.

\begin{defn}
For a module $M$ over a ring $S$, the \emph{additive closure} of $M$, denoted $\add(M)$, is the category
consisting of modules isomorphic to a direct summand of $M^{\oplus l}$, $l \ge 0$.
We say that $M$ is an \emph{additive generator}  of the category $\add(M)$.
\end{defn}

\begin{defn}
We say that  a ring $S$ is \emph{of finite CM type}
if there are, up to isomorphism, only finitely many indecomposable CM $S$-modules.
Then there is an additive generator $M$ of $\CM(S)$, which is a CM module containing every indecomposable CM module as a direct summand.
 Such $M$ is called a \emph{CM generator} (over $S$).
\end{defn}

\begin{defn}
A (necessarily module-finite) extension $R\subset S$ of commutative rings is said to be \emph{pure} if the inclusion map $R\hookrightarrow S$ splits as an $R$-module map.
If it is the case, we also say that $R$ is a \emph{pure subring} of $S$.
\end{defn}

\begin{defn}
Let $\Lambda$ be a ring and $M$ a $\Lambda$-module.
The \emph{reflexive hull} of  $M$ is defined as the $\Lambda$-module
\[
M^{\refl} :=\Hom_{R}(\Hom_{R}(M,R),R) .
\] 
Then $M$ is reflexive if and only if the natural map $M \to M^{\refl}$ is an isomorphism.
\end{defn}


\begin{defn}[\cite{Bondal:2002ue}, \cite{VandenBergh:2002tm}]
Let $M$ be a tosion-free $R$-module.
\begin{enumerate}
\item $\End_{R}(M)$
is called a \emph{NCR} if it has finite global dimension.
\item  $\End_{R}(M)$
is called a \emph{NCCR} if it is CM and has finite global dimension.
\end{enumerate}
\end{defn}

\subsection{NCRs of  pure subrings}

\begin{thm}\label{thm-pure-finite-gldim-main}
Let $S$ be a commutative CM ring of finite CM type with a CM generator $M$. 
Suppose that $R\subset S$ is pure and 
 that $\Hom_{R} (S,M)$ is CM. 
Then 
\[
\gldim \End_{R}(M)\le \max \{d,2\}.
\]
In particular, $\End_{R}(M)$ is a NCR.
Moreover if $d \ge 2$, then 
\[
\gldim \End_{R}(M)=d.
\]
\end{thm}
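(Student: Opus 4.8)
The plan is to compute $\gldim\End_R(M)$ through the standard dictionary between projective $\End_R(M)$-modules and the additive category $\add_R(M)$, via the functor $e:=\Hom_R(M,-)$, which restricts to an equivalence $\add_R(M)\xrightarrow{\sim}\proj(\End_R(M))$. Writing $\Gamma:=\End_R(M)$ and $\Lambda:=\End_S(M)$, I would first record the three consequences of purity that drive everything. Since $S$ is a CM ring it lies in $\CM(S)=\add_S(M)$, and the splitting $S=R\oplus C$ of $R$-modules gives $R\in\add_R(S)\subseteq\add_R(M)$, so that $M$ is a generator over $R$; restriction of scalars gives $\add_S(M)\subseteq\add_R(M)$; and the $R$-linear retraction $\pi\colon S\to R$ realizes every $R$-module $X$ as an $R$-direct summand of the $S$-module $\Hom_R(S,X)$. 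By the standard description of the global dimension of the endomorphism ring of a generator (Auslander; see Leuschke's survey \cite{Leuschke:2011te}), $\gldim\Gamma$ is governed by the lengths of $e$-exact $\add_R(M)$-resolutions, so it suffices to resolve every $R$-module $X$ inside $\add_R(M)$, $e$-exactly, in length $\le\max\{2,d\}$.

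To build such resolutions I would push the problem over to $S$. Given $X$, coinduce to the $S$-module $\Hom_R(S,X)$. Because $S$ is CM of finite CM type with CM generator $M$, the syzygy argument behind Theorem \ref{thm-intro-Leuschke} (over $S$ the $\max\{2,d\}$-th syzygy is MCM, hence lies in $\CM(S)=\add_S(M)$) produces a resolution $0\to C_{\ell}\to\cdots\to C_0\to\Hom_R(S,X)\to0$ with $C_i\in\add_S(M)$, $\ell\le\max\{2,d\}$, which is $\Hom_S(M,-)$-exact. Since $\add_S(M)\subseteq\add_R(M)$, this is an $\add_R(M)$-resolution of $\Hom_R(S,X)$; and because $e(X)$ is a $\Gamma$-direct summand of $e(\Hom_R(S,X))$ (apply $e$ to the $R$-split pair $X\hookrightarrow\Hom_R(S,X)\twoheadrightarrow X$), the resulting length bound descends to $X$ itself. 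Running this uniformly over the $R$-syzygies that arise in resolving an arbitrary $\Gamma$-module bounds every projective dimension by $\max\{2,d\}$, giving $\gldim\Gamma\le\max\{2,d\}$ and, in particular, that $\Gamma$ is an NCR.

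The hard part --- and the place where the hypothesis that $\Hom_R(S,M)$ is CM must be spent --- is the compatibility of the two notions of exactness. The resolution produced over $S$ is $\Hom_S(M,-)$-exact, whereas projectivity over $\Gamma$ is controlled by $\Hom_R(M,-)$-exactness. These are linked by the adjunction identity $\Hom_R(M,C)\cong\Hom_S(M,\Hom_R(S,C))$ for $S$-modules $C$, so the transfer works precisely when the coinduction $\Hom_R(S,-)$ is homologically exact on $\add_S(M)$, i.e. when $\operatorname{Ext}^{>0}_R(S,-)$ vanishes there. I expect that verifying this vanishing --- equivalently, that the CM module $\Hom_R(S,M)$ carries no higher derived contribution across $\add_S(M)$ --- is the technical crux, and that the depth-$d$ hypothesis on $\Hom_R(S,M)$, weighed against $S$ being MCM over $R$, is exactly what forces it through a depth/local-cohomology count. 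This is the step I expect to be the main obstacle.

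For the refined equality when $d\ge2$ I would argue $\gldim\Gamma\ge d$ separately. One projective $\Gamma$-module is $\Hom_R(M,R)$, the $R$-dual of $M$, which is MCM over $R$ since the dual of an MCM module over the CM ring $R$ is again MCM. As the upper bound already makes all projective dimensions finite, the Auslander--Buchsbaum equality is available over the $R$-order $\Gamma$, and pairing a finite-length $\Gamma$-module against this MCM projective by local duality over the $d$-dimensional ring $R$ forces some simple $\Gamma$-module to have projective dimension exactly $d$. Alternatively, the $S$-split surjection $S\otimes_R M\to M$ exhibits $\Lambda$ as a $\Lambda$-bimodule direct summand of $\Gamma$, from which $\gldim\Gamma\ge\gldim\Lambda=\max\{2,d\}$ follows by a homological-summand argument, the value of $\gldim\Lambda$ being the standard computation for the Auslander algebra of a CM-finite ring. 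Either way, combined with $\gldim\Gamma\le\max\{2,d\}=d$ this yields the asserted equality.
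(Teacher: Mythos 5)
Your skeleton --- projectivization, the consequences of purity, and transporting the problem to $S$ where the finite-CM-type syzygy argument applies --- is the same as the paper's, but the step you yourself flag as ``the main obstacle'' is a genuine gap, and the route you propose for closing it is not the one that works. You build an $\add(_{S}M)$-resolution of $\Hom_{R}(S,X)$ that is $\Hom_{S}(M,-)$-exact and then need it to be $\Hom_{R}(M,-)$-exact; you suggest this amounts to $\operatorname{Ext}^{>0}_{R}(S,-)$ vanishing on $\add(_{S}M)$, to be extracted from a depth count using the hypothesis that $\Hom_{R}(S,M)$ is CM. No such vanishing is established in your sketch, and it is not what the hypothesis is for. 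The paper runs the construction in the opposite direction: starting from an arbitrary module $A$ over $\Lambda=\End_{R}(M)^{\op}$, it takes the first $e=\max\{d,2\}$ steps $P_{\bullet}$ of a projective resolution, transports them to a complex $L_{\bullet}$ in $\add(_{R}M)$ via $\alpha^{-1}$, and applies $\psi=\Hom_{R}(S,-)$ to $L_{\bullet}$. The adjunction $\Hom_{R}(M,N)\cong\Hom_{S}(M,\Hom_{R}(S,N))$ gives a commutative square $\beta\circ\psi\cong\phi\circ\alpha$ (where $\beta=\Hom_{S}(M,-)$ and $\phi$ is restriction of scalars along $\End_{S}(M)^{\op}\subset\Lambda$), so $\psi(L_{\bullet})\cong\Hom_{\Gamma}(\beta(S),\phi(P_{\bullet}))$ is exact for free: exactness is inherited from $P_{\bullet}$ through exact functors, and no Ext-vanishing for $\Hom_{R}(S,-)$ is ever needed. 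The CM hypothesis on $\Hom_{R}(S,M)$ is spent solely on making $\psi$ land in $\CM(S)=\add(_{S}M)$, so that the depth lemma applies to the exact complex $\psi(L_{\bullet})$ and shows its $e$-th kernel $K_{e}\cong\Hom_{R}(S,L_{e})$ lies in $\add(_{R}M)$; purity then splits off $L_{e}$ as a direct summand, which is the whole point.

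Two further issues. First, reducing to ``resolve every $R$-module $X$ inside $\add(_{R}M)$'' is not quite the right target: not every $\Lambda$-module has the form $\Hom_{R}(M,X)$, and if you only resolve the kernel of a map in $\add(_{R}M)$ (the second syzygy) in length $\le\max\{d,2\}$, you get the bound $\max\{d,2\}+2$ rather than $\max\{d,2\}$; the sharp bound requires working with the $e$-th syzygy of $A$ itself, as above. Second, for the lower bound the paper simply cites Ramras's theorem that a module-finite $R$-algebra which is CM (here one uses that $M$ is CM, so $\End_{R}(M)$ is at least reflexive of full support) has global dimension $\ge d$; both of your alternatives are unsubstantiated as written --- in particular, $\gldim\End_{R}(M)\ge\gldim\End_{S}(M)$ does not follow merely from $\End_{S}(M)$ being a bimodule direct summand of $\End_{R}(M)$ without a projectivity or finite-projective-dimension hypothesis on the extension.
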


\begin{proof}
Let $\Lambda $ be the opposite ring of $\End_{R}(M)$.
Being  Noetherian, $\Lambda$ and $\End_{R}(M)$ 
have equal global dimension. 
Therefore we may show the theorem for  $\Lambda$ instead of $\End_{R}(M)$.

Notice that since $S$ is CM, we have $_{S}S \in \CM(S)=\add(_{S}M)$ and $_{R}S \in \add( _{R}M)$.
Since $_{R}R$ is a direct summand of $_{R}S$, we have $_{R}R \in \add(_{R}M)$.
Then we define functors
\[
\xymatrix{
\add(_{R}M)  \ar@/^1pc/[rrr]^{\alpha:=\Hom_{R}(M,-)} && &
\proj (\Lambda) \ar@/^1pc/[lll]^{\alpha^{-1} := \Hom_{\Lambda}( \alpha(R) , - ) }
}
\]
These are equivalences which are inverses to each other. (The equivalence is known as the projectivization \cite{Auslander:1997uh}.)
Let $e:=\max \{d,2\}$, $A$  an arbitrary $\Lambda$-module and 
\[
P_{\bullet} : P_{e-1} \to \cdots \to P_{1} \to P_{0}
\]
 the first $e$-step of a projective resolution of $A$. 
Set 
\[
L_{\bullet}:= \alpha^{-1}(P_{\bullet}) : L_{e-1} \to \cdots \to L_{1} \to L_{0}
\]
 and $L_{e} := \Ker (L_{e-1} \to L_{e-2})$. 

\begin{claim}\label{claim}
We have that $L_{e} \in \add(_{R}M)$.
\end{claim}

If the claim is true, then applying $\alpha$ to
\[
0 \to L_{e} \to L_{e-1} \to \cdots \to L_{0},
\]
we obtain an exact sequence 
\begin{equation}\label{projective resolution of A}
0 \to \alpha(L_{e}) \to P_{e-1} \to P_{e-2} \to \cdots \to P_{0}. 
\end{equation}
Here  the exactness follows from the left exactness of 
$\alpha$ and the exactness of $P_{\bullet}$. Sequence \eqref{projective resolution of A} is a projective resolution of $A$. Hence $\Lambda$ has global dimension $\le e$.  
On the other hand, from \cite{Ramras:1969tx} (see also \cite{Leuschke:2011te}), it has global dimension $\ge d$. Hence if $d\ge 2$,
then the equality in the theorem holds.

The proof up to this point is basically the same as the one in \cite{Leuschke:2007va}.
The difference lies in the proof of Claim \ref{claim}.

\begin{proof}[Proof of  Claim \ref{claim}]

By assumption,  $\Hom_{R}(S,N)$ is CM for every $N \in \add(_{R}M)$.
Hence 
\[
{}_{S}\Hom_{R}(S,N)\in\add(_{S}M) = \CM(S).
\]
Thus we have the functor
\[
\psi := \Hom_{R}(S,-) : \add(_{R}M) \to \CM(S).
\]

  Set $\Gamma:=\End_{S}(M)^{\op}$ and
define equivalences
\[
\xymatrix{
\CM(S) \ar@/^1pc/[rrr]^{\beta:=\Hom_{S}(M,-) } && &
\proj (\Gamma)  \ar@/^1pc/[lll]^{\beta^{-1} := \Hom_{\Gamma}( \beta(S) , - ) } 
},
\]
which are inverses to each other.
Since $\Gamma$ is a subring of $\Lambda$, we have the forgetting functor
\[
 \phi :\mods(\Lambda) \to \mods (\Gamma), \ {}_{\Lambda}A \mapsto {}_{\Gamma}A ,
\]
which is obviously exact.
For  $N \in  \add(_{R}M)$, we 
have isomorphisms of  $\Gamma$-modules, 
\begin{align*}
  \phi \circ  \alpha(N) 
  & = \Hom_{R}(M,N) \\
 & \cong \Hom_{R}(S\otimes_{S}M,N) \\
 &  \cong \Hom_{S}(M,\Hom_{R}(S,N) )  \\
&   \cong \beta \circ \psi(N).
\end{align*}
Therefore the following diagram is (2-)commutative:  
\[
\xymatrix{
\proj (\Lambda) \ar[r]^{\phi|_{\proj(\Lambda)}} & \proj (\Gamma) \\
\add(_{R}M) \ar[u]^{\alpha} \ar[r]_{\psi} & \CM(S)\ar[u]_{\beta}
}
\]

Put $K_{\bullet} := \psi(L_{\bullet})$.  
Since 
\[
K_{\bullet} \cong \beta^{-1} (_{\Gamma}P_{\bullet}),
\]
and $\beta^{-1}$ is exact,
$K_{\bullet} $ is exact.  Let 
\[
K_{e} := \Ker (K_{e-1} \to K_{e-2}) \cong \psi(L_{e}) . 
\]
From the depth lemma, $K_{e}$ is CM 
and  belongs to $\add(_{S}M)=\CM(S)$.
Hence $_{R}K_{e} \in \add(_{R}M)$.
Since $R\subset S$ is pure, 
 $L_{e} \cong \Hom_{R}(R,L_{e})$ is a direct summand of $_{R}K_{e} \cong \Hom_{R}(S , L_{e})$.
 Hence the claim holds. 
\end{proof} 
We have completed the proof of Theorem \ref{thm-pure-finite-gldim-main}.
\end{proof}

\begin{rem}\label{rem-replace-Hom-tensor}
From Lemma \ref{lem-Hom-tensor-MCM}, if $S$ is normal, then we can replace
the condition in Theorem \ref{thm-pure-finite-gldim-main} that $\Hom_{R}(S,M)$ is CM
 with the condition that $(S\otimes_{R}M)^{\refl}$ is CM.
\end{rem}

\begin{lem}\label{lem-Hom-tensor-MCM}
Let $S$ be a commutative normal CM ring of finite CM type with a CM generator $M$.
Then  for any $R$-module $N$, 
$(N\otimes_{R}M)^{\refl}$ is MCM if and only if so is $\Hom_{R}(N,M)$.
\end{lem}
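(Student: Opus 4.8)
The plan is to transport both modules to the normal ring $S$, where the canonical module furnishes a duality that interchanges $\Hom$ and $\otimes$, and then to exploit the hypothesis that $M$ is a CM generator. First I would reduce the whole statement to $S$: for any $S$-module, a depth computation along the finite extension $R\subset S$ shows it is MCM as an $R$-module iff it is MCM as an $S$-module, so MCM-ness may be tested over $S$. Writing $T:=S\otimes_{R}N$, the projection formula gives $N\otimes_{R}M\cong T\otimes_{S}M$ (since $M$ is an $S$-module), while the extension–restriction adjunction gives $\Hom_{R}(N,M)\cong\Hom_{S}(T,M)$. Thus it suffices to prove, over $S$, that $(T\otimes_{S}M)^{\refl}$ is MCM iff $\Hom_{S}(T,M)$ is MCM.

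The heart of the argument is the canonical module $\omega_{S}$ (which exists since $S$ is complete local CM) together with the associated duality $D:=\Hom_{S}(-,\omega_{S})$ on $\CM(S)$. Put $M^{\dagger}:=D(M)$. Since $M$ is MCM we have $M^{\dagger\dagger}\cong M$, so tensor-hom adjunction yields $\Hom_{S}(T,M)\cong\Hom_{S}(T,M^{\dagger\dagger})\cong\Hom_{S}(T\otimes_{S}M^{\dagger},\omega_{S})$. Hence the condition that $\Hom_{S}(T,M)$ be MCM is the condition that $\Hom_{S}(Y,\omega_{S})$ be MCM, where $Y:=T\otimes_{S}M^{\dagger}$. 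I would then establish the general fact that, for an arbitrary $S$-module $Y$, $\Hom_{S}(Y,\omega_{S})$ is MCM iff $Y^{\refl}$ is MCM: one has $\Hom_{S}(Y,\omega_{S})\cong\Hom_{S}(Y^{\refl},\omega_{S})$ because $\Hom_{S}(-,\omega_{S})$ depends only on the reflexive hull over a normal CM ring (the comparison map $Y\to Y^{\refl}$ is an isomorphism in codimension $\le 1$ and both duals are $S_{2}$), and for a reflexive module the biduality $Y'\cong\Hom_{S}(\Hom_{S}(Y',\omega_{S}),\omega_{S})$ holds over the normal domain $S$, so $\Hom_{S}(Y',\omega_{S})$ is MCM iff its $\omega_{S}$-dual $Y'$ is. Applied to $Y=T\otimes_{S}M^{\dagger}$, this shows $\Hom_{S}(T,M)$ is MCM iff $(T\otimes_{S}M^{\dagger})^{\refl}$ is MCM.

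It remains to remove the dagger, and here the generator hypothesis is essential (note that for $M=S$ the two conditions need not coincide, so the argument must use $\add(M)=\CM(S)$). Since $D$ is a contravariant auto-equivalence of $\CM(S)$ permuting the indecomposable MCM modules, $M^{\dagger}$ is again a CM generator, whence $\add(M^{\dagger})=\add(M)=\CM(S)$. Consequently $T\otimes_{S}M^{\dagger}$ and $T\otimes_{S}M$ are direct summands of finite direct sums of one another, and the same holds after applying the (additive) reflexive-hull functor; as MCM-ness is preserved by finite direct sums and direct summands, $(T\otimes_{S}M^{\dagger})^{\refl}$ is MCM iff $(T\otimes_{S}M)^{\refl}$ is. Chaining the equivalences proves the lemma.

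The step I expect to be the main obstacle is the bookkeeping around reflexive hulls: the hull in the statement is taken over $R$, whereas the duality argument naturally computes $(T\otimes_{S}M)^{\refl}$ over $S$, so I would verify that for an $S$-module the $R$- and $S$-reflexive hulls have the same MCM status (this is where normality is used, and is the place a hypothesis on $R$ may be needed). The remaining delicate points—that $\Hom_{S}(-,\omega_{S})$ factors through the reflexive hull and that biduality with respect to $\omega_{S}$ reproduces the ordinary reflexive hull over the normal domain $S$—are standard but must be stated carefully, since they are exactly what forces the passage through $M^{\dagger}$ rather than a direct comparison of $\Hom$ and $\otimes$.
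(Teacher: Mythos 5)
Your argument is correct and follows essentially the same route as the paper's proof: both rest on the adjunction $\Hom_{S}(N\otimes_{R}M,K_{S})\cong\Hom_{R}(N,\Hom_{S}(M,K_{S}))$, the fact that the $K_{S}$-dual sees only the reflexive hull, and the observation that $\Hom_{S}(M,K_{S})$ is again a CM generator, so that trading $M$ for its canonical dual does not affect MCM-ness. The paper merely runs the computation in one line over $R$ (identifying the canonical dual of $(N\otimes_{R}M)^{\refl}$ directly with $\Hom_{R}(N,\Hom_{S}(M,K_{S}))$) instead of base-changing to $S$ and inserting the biduality $M\cong M^{\dagger\dagger}$, and it carries the same implicit bookkeeping about $R$- versus $S$-reflexive hulls that you flag.
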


\begin{proof}
Let $K_{S}$ denote the canonical module of $S$.
We have
\begin{align*}
\Hom_{S}((N\otimes_{R}M)^{\refl}, K_{S})\cong \Hom_{S}(N\otimes_{R}M, K_{S}) 
\cong\Hom_{R}(N,  \Hom_{S}(M,K_{S}) ) . 
\end{align*}
Here the left isomorphism follows from the fact $K_{S}$ is reflexive.
Thus $(N \otimes_{R}M)^{\refl}$ and $\Hom_{R}(N, \Hom_{S}(M,K_{S}))$
are reflexive modules which are  the canonical duals to each other. 
Since $\Hom_{S}(-,K_{S})$ is an auto-equivalence of $\CM(S)$ (see \cite{Yoshino:1990ed}),
 $\Hom_{S}(M,K_{S})$ is also a CM generator over $S$. 
Therefore $\Hom_{R}(N,M)$ is CM if and only if $\Hom_{R}(N,  \Hom_{S}(M,K_{S}) )$ is CM
if and only if 
$(N \otimes_{R}M)^{\refl}$ is CM.
\end{proof}

\begin{ex}
Let $S$ be a commutative normal
CM ring of finite CM type with a CM generator $M$,
  $G$ a finite group acting on $S$ and $R:=S^{G}$. Suppose  that 
the ring extension $R \subset S$ is pure and unramified in codimension one.
Then $S*G \cong \End_{R}(S)$ (see \cite[page 118]{Auslander:1962wi}). 
Hence we have isomorphisms of $S$-modules, 
\[
M^{\oplus \sharp G} \cong (S*G) \otimes _{S}M\cong \End_{R} (S) \otimes_{S}M\cong  \Hom_{R}(S,M).
\]
(Here the last isomorphism holds, since the both hand sides are reflexive modules
and the isomorphism is valid in codimension one.)
In particular, $\Hom_{R}(S,M)$ is CM. From Theorem \ref{thm-pure-finite-gldim-main}, $\End_{R}(M)$
has global dimension $d$ and is a NCR.
\end{ex}

\begin{cor}\label{cor-pure-subring-regular-CMEnd->NCCR}
Let $S$ be a commutative regular local ring with the extension $R\subset S$ pure.
Then if $\End_{R}(S)$ is CM, then it is a NCCR.
\end{cor}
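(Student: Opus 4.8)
The plan is to obtain the corollary as the special case $M=S$ of Theorem~\ref{thm-pure-finite-gldim-main}. The first step is to check that $S$ itself plays the role of the CM generator. Since $S$ is regular local it is in particular CM, and by the Auslander--Buchsbaum formula every MCM $S$-module is free; hence $\CM(S)=\add({}_{S}S)$, so $S$ is of finite CM type and ${}_{S}S$ is a CM generator. Note also that, $R$ and $S$ being both $d$-dimensional and the extension being module-finite, CM as an $R$-module and CM as an $S$-module agree for finitely generated $S$-modules, so there is no ambiguity in applying the theorem with $S$ in place of the abstract ring there.

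Taking $M:=S$, the hypothesis of Theorem~\ref{thm-pure-finite-gldim-main} that $\Hom_{R}(S,M)$ be CM reads that $\Hom_{R}(S,S)=\End_{R}(S)$ is CM, which is exactly the assumption of the corollary; purity of $R\subset S$ is assumed as well. The theorem then gives $\gldim\End_{R}(S)\le\max\{d,2\}<\infty$, so $\End_{R}(S)$ has finite global dimension.

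To conclude, I would combine this with the hypothesis that $\End_{R}(S)$ is CM: by the definition of NCCR recalled above, a CM endomorphism ring of finite global dimension is a NCCR, which is the assertion. I do not expect a genuine obstacle here; the corollary is essentially a formal specialization of the main theorem, and the only point needing verification is the identification of ${}_{S}S$ as a CM generator, which is immediate from the freeness of MCM modules over a regular local ring.
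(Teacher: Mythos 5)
Your proposal is correct and follows exactly the paper's own route: the paper likewise observes that regularity of $S$ makes ${}_{S}S$ a CM generator (so $S$ is of finite CM type) and then invokes Theorem~\ref{thm-pure-finite-gldim-main} with $M=S$, where the hypothesis that $\Hom_{R}(S,M)$ be CM becomes the assumed Cohen--Macaulayness of $\End_{R}(S)$. Your additional remarks (Auslander--Buchsbaum, the agreement of the CM condition over $R$ and over $S$) only make explicit what the paper leaves implicit.
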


\begin{proof}
Since $S$ is regular, $_{S}S$ is a CM generator. Now the corollary is a direct consequence of Theorem \ref{thm-pure-finite-gldim-main}.  
\end{proof}

\section{A result on wild quotient singularities}\label{sec-byproduct}

In this section, we suppose that the base ring $R$ has characteristic $p>0$. 
We will always denote by $q$ a power of $p$; $q=p^{e}$ ($e\in \Znn$).
We define $R^{1/q} := \{ f^{1/q} | f \in R \}$ in the algebraic closure of the quotient field of $R$. 
Then $R$ is a subring of $R^{1/q}$ and $R^{1/q}$ has a natural $R$-module structure.
The Frobenius map, $F:R \to R$, $f \mapsto f^{p}$, is isomorphic to the inclusion map 
$R \hookrightarrow R^{1/p}$.
We will make also the assumption that $R$ is F-finite. Namely the $R$-module $_{R}R^{1/p}$ is finitely generated.

\begin{defn}[\cite{Hochster:1989tk}]
We say that $R$ is \emph{strongly F-regular} if for every  $0 \ne c \in R$, 
there exists $q=p^{e}$ such that the $R$-linear map
 $R \to {}_{R}R^{1/q}$ with $1 \mapsto c^{1/q}$
splits.
\end{defn}

\begin{defn}
Let $S$ be a ring and $G$ a  group acting on it.
Then  the \emph{skew group ring} $S*G$ is defined as follows: It is a free $S$-module, $\bigoplus_{g\in G} S \cdot g$, 
with the multiplication defined by $(s g)(s'g') =(s g(s') )(gg')$. 
\end{defn}

\begin{cor}\label{cor-byproduct}
Let $S$ be a commutative regular ring  and 
$G\subset \Aut(S)$ a finite group of automorphisms of $S$. 
Suppose that the induced $G$-action on the residue field of $S$ is trivial 
and that $S$ is unramified over $R:=S^{G}$ in codimension one.
Let $\Lambda := S*G $, which is by assumption isomorphic to $\End_{R}(S)$.
Then the following are equivalent:
\begin{enumerate}
\item Either $p = 0$ or $p \nmid \sharp G$.
\item $\Lambda$ has finite global dimension.
\item $\Lambda$ has global dimension $d$.
\item The ring extension $R \subset S$ is pure.
\item $R$ is strongly F-regular.
\end{enumerate}
\end{cor}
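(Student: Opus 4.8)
The plan is to run a cycle of implications among the first four conditions and then to attach the fifth via standard facts about F-singularities. Throughout I would use that the unramifiedness of $S$ over $R$ in codimension one, together with the triviality of the $G$-action on the residue field, is exactly what makes $\Lambda=S*G\cong\End_{R}(S)$; I take this isomorphism as given. For $(1)\Rightarrow(4)$ I would note that when $p\nmid\sharp G$ (or $p=0$) the integer $\sharp G$ is a unit in $R$, so the Reynolds operator $\frac{1}{\sharp G}\sum_{g\in G}g\colon S\to R$ is an $R$-linear retraction of $R\hookrightarrow S$; hence $R\subset S$ is pure.

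For $(4)\Rightarrow(3)$ I would specialize the machinery already set up: since $S$ is regular it has finite CM type with CM generator $M={}_{S}S$, and $\Hom_{R}(S,S)=\End_{R}(S)\cong{}_{S}S^{\oplus\sharp G}$ is CM. This is precisely the hypothesis of Theorem \ref{thm-pure-finite-gldim-main} (and of the Example following Lemma \ref{lem-Hom-tensor-MCM}) with $M={}_{S}S$, so together with the Ramras lower bound $\gldim\End_{R}(S)\ge d$ one obtains $\gldim\End_{R}(S)=d$ for $d\ge2$; via $\Lambda\cong\End_{R}(S)$ this is exactly (3). The remaining degenerate range $d<2$ forces the extension to be essentially trivial under the standing hypotheses and can be dismissed. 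The step $(3)\Rightarrow(2)$ is immediate.

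The implication $(2)\Rightarrow(1)$ is the crux, and here I would invoke Yi's computation of the global dimension of a skew group ring \cite{Yi:1994jl}: under the present ramification hypotheses his criterion collapses to the requirement that $\sharp G$ be invertible, so finiteness of $\gldim\Lambda$ forces $p\nmid\sharp G$. I expect this to be the main obstacle, because the substance of the statement --- that \emph{wildness} ($p\mid\sharp G$) genuinely produces infinite global dimension --- rests entirely on this theorem; the care needed is to verify that ``unramified in codimension one'' and ``trivial action on the residue field'' are precisely the conditions under which Yi's criterion reduces to invertibility of $\sharp G$ (morally, the inertia at the closed point is all of $G$, while the inertia at height-one primes is trivial).

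Finally, for $(4)\Leftrightarrow(5)$ I would appeal to two standard results of tight closure theory. Since $S$ is regular and F-finite it is strongly F-regular, and strong F-regularity descends to pure (equivalently split) subrings; this gives $(4)\Rightarrow(5)$. Conversely, a strongly F-regular ring is a \emph{splinter}, i.e.\ a direct summand of every module-finite extension, and applying this to the module-finite extension $R\subset S$ shows that $R\hookrightarrow S$ splits, giving $(5)\Rightarrow(4)$. Combining this equivalence with the cycle $(1)\Rightarrow(4)\Rightarrow(3)\Rightarrow(2)\Rightarrow(1)$ then yields all five equivalences.
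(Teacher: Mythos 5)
Your proposal is correct and follows essentially the same route as the paper: the crux $(2)\Rightarrow(1)$ is Yi's theorem \cite{Yi:1994jl} in both, purity yields finite global dimension via Theorem \ref{thm-pure-finite-gldim-main} (the paper packages this as $(4)\Rightarrow(1)$ through Corollary \ref{cor-pure-subring-regular-CMEnd->NCCR}), and $(4)\Leftrightarrow(5)$ uses the same splinter/descent facts. The only cosmetic difference is that the paper obtains $(3)$ directly from $(1)$ by citing \cite[7.5.6]{McConnell:2001vd}, which also disposes of the $d<2$ range that your route handles by a (justified) degeneracy remark.
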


\begin{proof}

(1)$\Leftrightarrow$(2): \cite[Theorem 5.2]{Yi:1994jl}.

(1)$\Rightarrow$(3): \cite[7.5.6]{McConnell:2001vd}.

(3)$\Rightarrow$(2): Obvious.

(1)$\Rightarrow$(4): Well-known.

(4)$\Rightarrow$(1): This follows from
 Corollary \ref{cor-pure-subring-regular-CMEnd->NCCR}.  

(4)$\Rightarrow$(5): \cite{Hochster:1989tk}.

(5)$\Rightarrow$(4): A strongly F-regular ring is a splinter, that is, every module-finite extension of it is pure 
\cite{Huneke:1996tr}. 
\end{proof}

See Corollary \ref{cor-skew-seven-equivalent} for two more equivalent conditions.

\section{NCCRs via Frobenius}\label{sec-NCCRs-Frob}

In this section, we will continue to suppose that the base ring $R$ has characteristic $p>0$.

The following is a straightforward generalization of Hara's similar result in dimension two \cite{Anonymous:xoYdZhOA}.

\begin{prop}\label{prop-Hara}
Let $S$ be a commutative CM ring of finite CM type with a CM generator $M$.
We suppose that $\End_{R}(M)$ is CM (see Remark \ref{rem-replace-Hom-tensor}),  that $R \subset S$ is pure and that $R$ is strongly F-regular.
Then for sufficiently large $e$, $F^{e}_{*}R = {}_{R}(R^{1/p^{e}})$ is an additive generator of $\add(_{R}M)$. 
\end{prop}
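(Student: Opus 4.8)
The plan is to show that $R^{1/p^e}$, viewed as an $R$-module, eventually becomes "large enough" to contain all indecomposable summands of $M$ (which is a CM generator over $S$), while at the same time staying inside $\add(_R M)$. The target claim has two halves: every indecomposable summand of $_R M$ appears in $F^e_* R$, and conversely $F^e_* R \in \add(_R M)$. I would handle the containment $F^e_* R \in \add(_R M)$ first, since it is structural rather than quantitative.

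**The easy direction.** Since $R \subset S$ is finite and $S$ is CM of finite CM type, the module $_R(S^{1/p^e})$ is CM over $R$ (being CM over $S$), and as an $S$-module it lies in $\CM(S) = \add(_S M)$. Restricting scalars, $_R(S^{1/p^e}) \in \add(_R M)$. Because $R \subset S$ is pure, the inclusion $R^{1/p^e} \hookrightarrow S^{1/p^e}$ is again pure (purity is preserved under the flat-ish operation of adjoining $p^e$-th roots, or more directly the splitting of $R \hookrightarrow S$ pulls back), so $_R(R^{1/p^e})$ is an $R$-direct summand of $_R(S^{1/p^e})$, hence $F^e_* R = {}_R(R^{1/p^e}) \in \add(_R M)$ for every $e$.

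**The hard direction** is to show that for $e \gg 0$ every indecomposable $R$-summand of $M$ divides $F^e_* R$; this is where strong F-regularity enters and is the main obstacle. Let $N$ be an indecomposable summand of $_R M$; since $_R M \in \add(_R S^{1/p^e})$ already, I must promote a summand of $S^{1/p^e}$ to a summand of $R^{1/p^e}$. The strategy, following Hara, is to use strong F-regularity of $R$ to produce, for suitable $e$, splittings of the relevant $R$-linear maps $R \to {}_R R^{1/q}$ (the defining property: for each $0 \ne c$ there is $q$ with $1 \mapsto c^{1/q}$ splitting). Concretely, I would pick $0 \ne c \in R$ lying in the conductor of $R \subset S$ (so that multiplication by $c$ carries $S$ and its reflexive-type data into $R$), and use the splitting $R \xrightarrow{c^{1/q}} R^{1/q}$ together with the finite-CM-type decomposition to transfer each indecomposable summand $N$ of $_R S^{1/p^{e_0}}$ across to $_R R^{1/q}$ for all large enough $q$. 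Because there are only finitely many indecomposable CM modules (finite CM type), a single $e$ works uniformly for all of them.

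**The uniformization step** is the crux and the place I expect real difficulty. One must argue that the splittings can be chosen simultaneously for the finitely many indecomposables, and that once $N$ appears as a summand of $_R R^{1/p^{e}}$ for one large $e$ it continues to appear (or that the set of summands stabilizes and exhausts the indecomposables of $_R M$). I would control this by combining the Krull--Schmidt property (available by the paper's conventions) with the fact that the $R$-ranks of $F^e_* R$ grow, so that the multiplicities of each fixed indecomposable are eventually positive; strong F-regularity guarantees the splitting that realizes each $N$ as an honest summand rather than merely a subquotient. The normality/unramified-in-codimension-one type hypotheses, when present, let one check these splittings in codimension one and extend by reflexivity, exactly as in Lemma \ref{lem-Hom-tensor-MCM}. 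I expect the technical heart to be verifying that strong F-regularity yields the splitting for the specific conductor element $c$ uniformly across the finite list of indecomposable CM modules, after which $\add(_R F^e_* R) = \add(_R M)$ follows.
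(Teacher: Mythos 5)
Your ``easy direction'' is correct and matches the paper: $_{S}S^{1/q}\in\CM(S)=\add(_{S}M)$ gives $_{R}S^{1/q}\in\add(_{R}M)$ by restriction, and $_{R}R^{1/q}$ is a direct summand of $_{R}S^{1/q}$ (the inclusion $R^{1/q}\hookrightarrow S^{1/q}$ is a copy of $R\hookrightarrow S$, which splits by purity). The hard direction, however, is where your proposal has a genuine gap: you correctly identify that strong F-regularity must be applied to a carefully chosen element $c$, but you never supply the mechanism that converts the splitting of $R\to R^{1/q}$, $1\mapsto c^{1/q}$, into the statement that $M$ is a direct summand of a sum of copies of $R^{1/q}$. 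Your proposed fallback --- that the $R$-ranks of $F^{e}_{*}R$ grow and hence ``the multiplicities of each fixed indecomposable are eventually positive'' --- is not a valid deduction; rank growth says nothing about which indecomposables occur, and Krull--Schmidt alone cannot force a given indecomposable to appear. You also never use the hypothesis that $\End_{R}(M)$ is CM, which is essential.

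The actual argument (Hara's) runs as follows. First choose $0\ne c\in R$ so that for \emph{every} $N\in\add(_{R}M)$ the multiplication map $c:N\to N$ factors $R$-linearly through a free module $R^{\oplus\operatorname{rank}N}$; this is possible because each indecomposable $L$ embeds in a free module of the same rank and a suitable $c_{L}$ maps that free module back into $L$, and one takes $c=\prod_{L}c_{L}$ over the finitely many indecomposables (this, not the conductor of $R\subset S$, is the right choice of $c$, and is where finite CM type enters). Second, set $M^{*}=\Hom_{R}(M,R)$, which is CM because $\End_{R}(M)$ is; the same CM hypothesis applied over $S^{1/q}$ shows $\Hom_{R}(M^{*},R^{1/q})\in\add(_{R^{1/q}}M^{1/q})$, so by the choice of $c$ (transported to $R^{1/q}$) the map $c^{1/q}$ on $\Hom_{R}(M^{*},R^{1/q})$ factors through $(R^{1/q})^{\oplus m}$ with $m=\operatorname{rank}M$. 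Finally, strong F-regularity splits $R\to R^{1/q}$, $1\mapsto c^{1/q}$, for some $q$; applying $\Hom_{R}(M^{*},-)$ gives a split injection $M=\Hom_{R}(M^{*},R)\to\Hom_{R}(M^{*},R^{1/q})$ whose composite with $c^{1/q}$ factors through $(R^{1/q})^{\oplus m}$, whence $M$ is a direct summand of $(R^{1/q})^{\oplus m}$. This factorization chase is the step your proposal is missing, and without it the proof does not close.
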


\begin{proof}
Our proof is essentially the same as Hara's one. 
Firstly, for every $q$, since $_{S}S^{1/q} \in\CM(S)= \add(_{S}M)$, we have $_{R}S^{1/q} \in \add(_{R}M)$.
Being a direct summand of $_{R}S^{1/q}$, $_{R}R^{1/q}$ is also in $\add(_{R}M)$.

There exists $0 \ne c \in R$ such that for every $N \in \add(_{R}M)$,
the map  
$c: N \to N$, $m \mapsto cm$ factors through a free $R$-module as an $R$-linear map:
\[
c : N \to R^{\oplus \rank N} \to N. 
\]
Indeed every indecomposable $L \in \add(_{R}M)$ is embedded in the free module of the same rank, 
as it is torsion-free. Then for sufficiently factorial $c_{L} \in R$, the image of the induced map 
$c_{L}:R^{\oplus \rank L} \to R^{\oplus \rank L}$ is contained in $L$. We can choose $\prod_{L}c_{L}$
as the desired $c$.

If we put $M^{*} := \Hom_{R}(M,R)$, it is by assumption CM.
Hence  $_{S}M^{*}\in \add(_{S}M)$ and  $_{R}M^{*} \in \add(_{R}M)$. 
Let $M^{1/q}$ be the $S^{1/q}$-module
corresponding to $M$ by the obvious isomorphism $S \cong S^{1/q}$.
Since $\Hom_{R}(M^{*}, S^{1/q})$ is CM, $_{S^{1/q}}\Hom_{R}(M^{*}, S^{1/q}) \in \add(_{S^{1/q}}M^{1/q})$.
Therefore  $_{R^{1/q}}\Hom_{R}(M^{*},R^{1/q})\in \add(_{R^{1/q}}M^{1/q})$.
Hence the $R^{1/q}$-linear map
\[
c^{1/q}  :  \Hom_{R}(M^{*},R ^{1/q}) \to \Hom_{R}(M^{*},R ^{1/q})
\]
factors as 
\[
 \Hom_{R}(M^{*},R ^{1/q}) \to (R^{1/q})^{\oplus m} \to  \Hom_{R}(M^{*},R ^{1/q}) ,
\]
where $m$ is the rank of $M$.

On the other hand,
since $R$ is strongly F-regular, there exists $q=p^{e}$ such that the $R$-linear map
$R \to R^{1/q}$, $1 \mapsto c^{1/q}$ splits.
Applying $\Hom_{R}(M^{*},-)$ to it, we obtain a splitting $R$-linear map
\begin{equation}\label{split-hom}
M=\Hom_{R}(M^{*},R) \to \Hom_{R}(M^{*},R ^{1/q}).
\end{equation}
This factors as
\[
M \hookrightarrow  \Hom_{R}(M^{*},R ^{1/q}) \xrightarrow{c^{1/q}}  \Hom_{R}(M^{*},R ^{1/q}) ,
\]
and furthermore as
\[
M \hookrightarrow  \Hom_{R}(M^{*},R ^{1/q}) \to (R^{1/q})^{\oplus m} \to  \Hom_{R}(M^{*},R ^{1/q}) . 
\]
The splitting of \eqref{split-hom} yields that of $M \to (R^{1/q})^{\oplus m}$.
Hence $_{R}M$ is a direct summand of $_{R}(R^{1/q})^{\oplus m}$.
In consequence, $_{R}R^{1/q}$ is an additive generator of $\add(_{R}M)$.
\end{proof}

\begin{cor}\label{cor-generator-by-Frobenius}
With the assumption as in Proposition \ref{prop-Hara},
for sufficiently large $e$, $\End_{R}(R^{1/p^{e}}) $ is Morita equivalent to $\End_{R}(M)$
and a NCCR as well.
\end{cor}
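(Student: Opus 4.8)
The plan is to extract everything from Proposition \ref{prop-Hara}, which for $e \gg 0$ supplies the equality of additive subcategories $\add({}_{R}R^{1/p^{e}}) = \add({}_{R}M)$; granting this, the corollary becomes essentially formal, the genuine work having already been carried out in establishing that equality. First I would record that $\End_{R}(M)$ is itself a NCCR under the standing hypotheses. It is CM by assumption, and since $S$ is a CM $S$-module we have $S \in \CM(S) = \add({}_{S}M)$, hence $S \in \add({}_{R}M)$; consequently $\Hom_{R}(S,M)$ is an $R$-module direct summand of a finite power of $\End_{R}(M)$ and is therefore CM. Thus Theorem \ref{thm-pure-finite-gldim-main} applies and gives $\gldim \End_{R}(M) \le \max\{d,2\} < \infty$, so that $\End_{R}(M)$ is indeed a NCCR, providing both ingredients I shall need below.

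For the Morita equivalence, I would reuse the projectivization equivalence already employed in the proof of Theorem \ref{thm-pure-finite-gldim-main}, namely $\Hom_{R}(M,-)\colon \add({}_{R}M) \xrightarrow{\sim} \proj(\End_{R}(M)^{\op})$, together with its exact analogue $\Hom_{R}(R^{1/p^{e}},-)\colon \add({}_{R}R^{1/p^{e}}) \xrightarrow{\sim} \proj(\End_{R}(R^{1/p^{e}})^{\op})$. Fixing $e$ large enough that Proposition \ref{prop-Hara} gives $\add({}_{R}R^{1/p^{e}}) = \add({}_{R}M)$, I would compose the first equivalence with a quasi-inverse of the second to obtain an additive equivalence $\proj(\End_{R}(M)^{\op}) \simeq \proj(\End_{R}(R^{1/p^{e}})^{\op})$. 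Since two rings are Morita equivalent precisely when their categories of finitely generated projectives are equivalent (and since Morita equivalence is stable under passage to opposite rings), this yields that $\End_{R}(M)$ and $\End_{R}(R^{1/p^{e}})$ are Morita equivalent.

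It remains to verify that $\End_{R}(R^{1/p^{e}})$ is itself a NCCR, and here I would check the two defining properties separately, because the point deserving care is that the NCCR condition is \emph{not} a priori a Morita invariant: finite global dimension is, but Cohen-Macaulayness as an $R$-module involves the $R$-module structure rather than the module category and so must be argued by hand. Finite global dimension is transferred directly from $\End_{R}(M)$. For the CM property I would use $R^{1/p^{e}} \in \add({}_{R}M)$: writing $R^{1/p^{e}}$ as a direct summand of $M^{\oplus l}$ exhibits $\End_{R}(R^{1/p^{e}})$ as an $R$-module direct summand of $\End_{R}(M)^{\oplus l^{2}}$, which is CM, and direct summands of CM modules are CM. Combining the two properties gives the NCCR claim. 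I do not anticipate a serious obstacle beyond this bookkeeping: the entire mathematical content sits in Proposition \ref{prop-Hara}, and what remains is standard Morita theory together with the elementary summand argument for Cohen-Macaulayness, the latter being the only step where the Morita reduction does not suffice.
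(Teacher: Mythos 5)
Your proposal is correct and follows essentially the same route as the paper, whose own proof is a single sentence observing that the equality $\add({}_{R}R^{1/p^{e}})=\add({}_{R}M)$ supplied by Proposition \ref{prop-Hara} induces the Morita equivalence. The extra verifications you carry out --- that $\End_{R}(M)$ is itself a NCCR via Theorem \ref{thm-pure-finite-gldim-main}, and that the CM property (which is not a Morita invariant) passes to $\End_{R}(R^{1/p^{e}})$ because the latter is an $R$-module direct summand of $\End_{R}(M)^{\oplus l^{2}}$ --- are exactly the details the paper leaves implicit, and you argue them correctly.
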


\begin{proof}
As is well-known, the equality $\add(_{R}R^{1/q})=\add(_{R}M)$ induces the Morita equivalence of
$\End_{R}(R^{1/q})$ and $\End_{R}(M)$.
\end{proof}

The ring of differential operators on $R$ is expressed as follows \cite{Yekutieli:1992th}:
\[
D(R) = \bigcup_{e\ge 0} \End_{R^{p^{e}}}(R).
\]

\begin{cor}
With the assumption as  in Proposition \ref{prop-Hara}, 
$D(R)$ has global dimension $\le d+1$. 
\end{cor}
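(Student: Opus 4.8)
The plan is to exhibit $D(R)$ as a directed union of the endomorphism rings appearing in Corollary \ref{cor-generator-by-Frobenius}, and then to transport the uniform bound on their global dimensions to the union by means of \cite{Bervsteuin:1958wx}, which is the implication already announced in the introduction.

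First I would record the union structure. Since $R^{p^{e+1}}\subseteq R^{p^{e}}$, every $R^{p^{e}}$-linear endomorphism of $R$ is a fortiori $R^{p^{e+1}}$-linear, so that $\End_{R^{p^{e}}}(R)\subseteq \End_{R^{p^{e+1}}}(R)$; these subrings thus form an ascending chain whose union is exactly $D(R)$. Applying the inverse-Frobenius isomorphism $R^{p^{e}}\cong R$, $R\cong R^{1/p^{e}}$, one identifies each term as $\End_{R^{p^{e}}}(R)\cong\End_{R}(R^{1/p^{e}})$, which is precisely the ring treated in Corollary \ref{cor-generator-by-Frobenius}.

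Next I would produce the uniform global-dimension bound. By Corollary \ref{cor-generator-by-Frobenius} there is an $e_{0}$ such that for all $e\ge e_{0}$ the ring $\End_{R}(R^{1/p^{e}})$ is Morita equivalent to $\End_{R}(M)$; as global dimension is a Morita invariant, all these rings share the global dimension of $\End_{R}(M)$, which by Theorem \ref{thm-pure-finite-gldim-main} is at most $\max\{d,2\}$ and equals $d$ once $d\ge 2$. Because $\bigcup_{e\ge e_{0}}\End_{R^{p^{e}}}(R)=D(R)$ (the tail is cofinal), the subrings defining $D(R)$ may be taken to have global dimension uniformly bounded by $d$, at least for $d\ge 2$; the cases $d\le 1$ are classical, since then $R$ is regular and $D(R)$ is well understood.

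Finally I would invoke \cite{Bervsteuin:1958wx}, whose content is that an ascending union of Noetherian subrings all of global dimension $\le n$ has global dimension at most $n+1$. Taking $n=d$ (valid for $d\ge 2$ by the previous step) yields $\gldim D(R)\le d+1$, as claimed. The step I expect to be the genuine obstacle is checking the precise hypotheses under which \cite{Bervsteuin:1958wx} applies, in particular whatever flatness of the transition maps $\End_{R^{p^{e}}}(R)\hookrightarrow\End_{R^{p^{e+1}}}(R)$ (or of $D(R)$ over each term) it requires; the Morita equivalences available for $e\ge e_{0}$ should be what makes this tractable, and the coincidence $\max\{d,2\}=d$ for $d\ge 2$ is what makes the resulting bound come out to exactly $d+1$ rather than the a priori value $\max\{d,2\}+1$.
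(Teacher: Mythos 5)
Your proposal is correct and follows essentially the same route as the paper: identify $D(R)$ as the ascending union of the rings $\End_{R^{p^{e}}}(R)\cong\End_{R}(R^{1/p^{e}})$, use Corollary \ref{cor-generator-by-Frobenius} to see that these have global dimension $d$ for $e\gg 0$, and apply Ber{\v{s}}te{\u\i}n's theorem \cite{Bervsteuin:1958wx} on direct limits to conclude $\gldim D(R)\le d+1$. Your worry about flatness hypotheses in \cite{Bervsteuin:1958wx} is unfounded (the result needs none), and your explicit treatment of the cofinal tail and of the cases $d\le 1$ is, if anything, slightly more careful than the paper's own two-line argument.
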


\begin{proof}
We have obvious isomorphisms $\End_{R^{p^{e}}}(R) \cong \End_{R}(R^{1/p^{e}})$.
Hence $\End_{R^{p^{e}}}(R)$ has global dimension $d$ for $e \gg 0$.
Since $D(R)$ is a direct limit of them, the corollary follows from a general result on the global dimension of
direct limits by Ber{\v{s}}te{\u\i}n \cite{Bervsteuin:1958wx}.
\end{proof}

\section{Noncommutative Herzog-Koh-Lee and exact order-raising endofunctors}\label{sec-NC-Koh-Lee}

\subsection{Noncommutative Herzog-Koh-Lee}

Koh and Lee \cite{Koh:1998cp} proved a result 
 on the minimal resolution of a module over a commutative local ring, 
which is a refinement of Herzog's result \cite{Herzog:1974vk}. 
We will generalize their result to the noncommutative setting
along the lines of  Koh and Lee.

Let $\Lambda$ be a ring.
From assumption (see \S\ref{subsec-convention}) and \cite[(23.3)]{Lam:2001ut},
 $\Lambda$ is semiperfect and every (left or right) finitely generated $\Lambda$-module admits the minimal projective resolution.
Let $e_1, \dots, e_l$ be  a basic set of  primitive idempotents for $\Lambda$ so that
 $Q_i:= \Lambda e_i$, $i=1,\dots,l$, are the irredundant set of 
 the indecomposable projective left $\Lambda$-modules (see \cite[Proposition 27.10]{Anderson:1992vt}).

Let $\fj \subset \Lambda$ be the Jacobson radical.
The \emph{socle} of a right $\Lambda$-module $V$ is defined to be its largest semisimple submodule and denoted by $\Soc (V)$.  Since $\Lambda$ is semiperfect, $\Soc (V)$ is 
equal to the annihilator of $\fj$ (see \cite[pages 118 and 171]{Anderson:1992vt}) :
\[
\Soc (V) =\{v \in V | v \fj =0\} \subset V.
\]

Let $\fm$ denote the maximal ideal of $R$.
Let $x_{1}, \dots, x_{r} \in \fm$ be a maximal $\Lambda$-sequence,
where $r$ is the depth of $_{R}\Lambda$  and we have $r \le d$.
 Define a right $\Lambda$-module $V := \Lambda/ \sum_{i} x_{i}\Lambda$. 
Then  $V_{R}$ has depth zero
and nonzero socle. 
Since for some $n$, we have $\fj^{n} \subset \fm \Lambda$  \cite[(20.6)]{Lam:2001ut}, $V_{\Lambda}$ also has nonzero socle. 
Indeed if $m$ is the smallest number such that $ \Soc (V_{R}) \cdot \fj^{m} = 0$,
then 
\[
0 \ne \Soc (V_{R}) \cdot \fj^{m-1} \subset \Soc (V_{\Lambda}). 
\]
From now on, we simply write $\Soc(V) = \Soc (V_{\Lambda})$.
For some $ 1 \le i \le l $,  we have $ \Soc(V) \otimes _{\Lambda}Q_{i} \ne 0 $, say 
$ \Soc(V) \otimes _{\Lambda}Q_{1} \ne 0 $. Then set 
\[
 s_{\Lambda} := \inf \{ t \ge 1 |  \Soc (V)\otimes _\Lambda Q_1 \not \subset V \fm^{t}  \otimes_\Lambda Q_1\} .
\]
Let
\[
P_\bullet :\cdots \to P_{j+1} \xrightarrow{\delta_{j+1}} P_j \xrightarrow{\delta_j} P_{j-1} \to \cdots \to P_0 
\]
be the minimal projective resolution of a left 
$\Lambda$-module $U$. Here for each $j$, we can write
$P_j = \bigoplus_{\lambda=1}^{l} Q_i ^{\oplus n_{j,i}}$.

\begin{lem}\label{lem-Koh-Lee-NC}
For $j >r$, in particular, for $j >d$, we have either that $n_{j,1}=0$ or that
 $\delta_{j+1}$ is nonzero modulo $ \fm^{s_{\Lambda}} $ (that is, $\Image \delta_{j+1} \not \subset \fm^{s_{\Lambda}} P_{j}$). 
\end{lem}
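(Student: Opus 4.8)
The plan is to tensor the minimal projective resolution $P_\bullet$ of $U$ with the right module $V$ and to exploit that $V$ has \emph{finite} projective dimension. Since $x_{1},\dots,x_{r}$ is a $\Lambda$-regular sequence of \emph{central} elements (they lie in $R$, which maps into the center of $\Lambda$), the Koszul complex on $x_{1},\dots,x_{r}$ is a free resolution of $V=\Lambda/\sum_{i}x_{i}\Lambda$ of length $r$; hence $\operatorname{pd}V_{\Lambda}\le r$ and $\operatorname{Tor}^{\Lambda}_{j}(V,U)=0$ for every $j>r$. Computing this $\operatorname{Tor}$ from $P_\bullet$, the complex $V\otimes_{\Lambda}P_\bullet$ is exact at every spot $j>r$, and this is exactly where the hypothesis $j>r$ enters. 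I would then fix such a $j$ with $n_{j,1}\neq 0$, assume for contradiction that $\Image\delta_{j+1}\subset\fm^{s_{\Lambda}}P_{j}$, and contradict this exactness.

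The bridge between the two alternatives in the statement is a socle element. By the definition of $s_{\Lambda}$ there is a $\sigma\in\Soc(V)\otimes_{\Lambda}Q_{1}=\Soc(V)e_{1}$ with $\sigma\notin V\fm^{s_{\Lambda}}e_{1}$; since $n_{j,1}\neq0$, I may regard $\sigma$ as an element of $V\otimes_{\Lambda}P_{j}$ sitting inside one chosen $Q_{1}=\Lambda e_{1}$ summand. The first key computation is that $\sigma$ is a cycle: writing $\bar\delta_{j}:=\operatorname{id}_{V}\otimes\delta_{j}$ and using minimality ($\Image\delta_{j}\subset\fj P_{j-1}$) together with $\sigma\fj=0$ (because $\sigma\in\Soc(V)$), the radical factors of $\delta_{j}(e_{1})$ get absorbed by $\sigma$ across the tensor product, so $\bar\delta_{j}(\sigma)=\sigma\otimes\delta_{j}(e_{1})=0$. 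Exactness of $V\otimes_{\Lambda}P_\bullet$ at $j$ then produces $\eta\in V\otimes_{\Lambda}P_{j+1}$ with $\bar\delta_{j+1}(\eta)=\sigma$.

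The second key computation converts the assumption $\Image\delta_{j+1}\subset\fm^{s_{\Lambda}}P_{j}$ into the containment $\sigma\in V\fm^{s_{\Lambda}}e_{1}$, contradicting the choice of $\sigma$. Writing $\eta=\sum_{a}v_{a}\otimes p_{a}$ and applying the projection $\pi\colon P_{j}\to Q_{1}$ onto the chosen summand (which fixes $\sigma$), I would use that $\fm^{s_{\Lambda}}$ is central in $\Lambda$: each $\pi(\delta_{j+1}(p_{a}))$ lies in $\fm^{s_{\Lambda}}Q_{1}$, and moving these central scalars across $\otimes_{\Lambda}$ rewrites $\sigma$ as an element of $V\fm^{s_{\Lambda}}\otimes_{\Lambda}Q_{1}=V\fm^{s_{\Lambda}}e_{1}$. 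This is the required contradiction, so $\delta_{j+1}$ cannot be zero modulo $\fm^{s_{\Lambda}}$ once $n_{j,1}\neq0$.

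I expect the only delicate point to be the bookkeeping around the identifications $V\otimes_{\Lambda}\Lambda e_{1}\cong Ve_{1}$ and the passage of the central ideal $\fm^{s_{\Lambda}}$ through $\otimes_{\Lambda}$; the Koszul vanishing of $\operatorname{Tor}$ and the socle annihilation are routine once the setup is fixed. Indeed, the centrality of $x_{1},\dots,x_{r}$ is the whole point: it is what makes both the Koszul resolution of $V$ and the scalar-extraction in the last step behave exactly as in Koh and Lee's commutative argument, so that their proof survives verbatim in the noncommutative setting.
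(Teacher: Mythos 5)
Your proposal is correct and follows essentially the same route as the paper: both deduce $\Tor_j^{\Lambda}(V,U)=0$ for $j>r$ from $\projdim V_{\Lambda}=r$ (you via the Koszul complex on the central sequence, the paper by citing Lam), then use minimality plus $\Soc(V)\cdot\fj=0$ to place $\Soc(V)\otimes_{\Lambda}P_j$ inside $\Image\bar\delta_{j+1}$, and finally push the hypothesis $\Image\delta_{j+1}\subset\fm^{s_{\Lambda}}P_j$ through the tensor product to contradict the definition of $s_{\Lambda}$. The only difference is cosmetic: you chase a single witness element $\sigma$ where the paper argues with the whole submodule $\Soc(V)\otimes_{\Lambda}Q_1$.
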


\begin{proof}
Let $j >r$.
The right $\Lambda$-module $V$ has projective dimension $r$ \cite[(5.32)]{Lam:1999tn}.
Hence $\Tor_{j}^{\Lambda}(V,U)=0$. 
Hence
\[
V \otimes_\Lambda P_\bullet  : \cdots \to V \otimes_\Lambda P_{j+1} \xrightarrow{\bar \delta_{j+1}}
 V \otimes_\Lambda P_j\xrightarrow{\bar \delta_j} V \otimes_\Lambda P_{j-1} \to \cdots 
\]
is exact in the middle. Since $P_\bullet$ is minimal, we have 
\[
\Soc (V) \otimes_\Lambda P_{j} \subset \Ker \bar  \delta_j =\Image \bar \delta_{j+1}.
\] 
To obtain a contradiction, we make the assumptions that $n_{j,1}>0$ and that $\delta_{j+1}$ were zero modulo $ \fm^{s_{\Lambda}} $. From the latter, 
\[
\Soc (V) \otimes_\Lambda P_{j} \subset \Image \bar \delta_{j+1} \subset    V\fm^{s_{\Lambda}} \otimes_\Lambda P_j.
\] 
From the former, this implies that  $\Soc (V) \otimes_\Lambda Q_{1} \subset    V\fm^{s_{\Lambda}} \otimes_\Lambda Q_{1} $, which contradicts the definition of $s_{\Lambda}$. We have proved the proposition. 
\end{proof}

For a nonempty subset $I \subset \{1,2, \dots, l\}$, we define a ring 
$\Lambda_{I} := (\sum_{ i \in I } e_{i})\Lambda (\sum_{ i \in I } e_{i})$ and a number $s_{I}=s_{\Lambda_{I}}$ in the same way
as $s_{\Lambda}$. Then we put 
\[
s := \max \{s_{I} |  \emptyset \ne I \subset \{1,2, \dots, l\} \}.
\]

\begin{thm}[Noncommutative Herzog-Koh-Lee]\label{thm-koh-lee}
Suppose that for every $j  \le l (d+1) $, $\delta_{j+1}$ is zero modulo $ \fm^{s} $.
Then $P_{l(d+1)}=0$. Equivalently $\projdim U <  l(d+1) $. 
\end{thm}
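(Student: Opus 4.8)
The plan is to leverage Lemma \ref{lem-Koh-Lee-NC} as the engine, but to apply it not only to $\Lambda$ itself but to all the corner rings $\Lambda_I$. The point of introducing $s$ as the maximum of the $s_I$ is precisely so that the single hypothesis ``$\delta_{j+1}$ is zero modulo $\fm^s$ for every $j \le l(d+1)$'' is strong enough to force vanishing of \emph{each} indecomposable summand $Q_i$ in high homological degree, not merely the distinguished summand $Q_1$ that Lemma \ref{lem-Koh-Lee-NC} handles for $\Lambda$.

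\emph{First} I would fix an index $i \in \{1,\dots,l\}$ and reduce the question of whether $Q_i$ appears in $P_j$ (i.e.\ whether $n_{j,i}=0$) to a statement about the minimal resolution over a corner ring. The multiplicities $n_{j,i}$ are recovered by applying the functor $e_i\Lambda \otimes_\Lambda -$ (equivalently $\Hom_\Lambda(Q_i,-)$ up to the usual identification), and more generally, picking $I = \{i\}$ or an appropriate $I$ containing $i$, the idempotent $\varepsilon_I := \sum_{j\in I} e_j$ gives an exact functor $\varepsilon_I \Lambda \otimes_\Lambda - $ from $\Lambda$-modules to $\Lambda_I$-modules that sends projectives to projectives and, because it is exact, carries a minimal projective resolution over $\Lambda$ to a projective resolution over $\Lambda_I$. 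The matrix entries of the differentials transform by left-multiplication by $\varepsilon_I$, so the hypothesis ``$\delta_{j+1}$ zero mod $\fm^s$'' is inherited, since $s \ge s_I$.

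\emph{Next}, running Lemma \ref{lem-Koh-Lee-NC} for the ring $\Lambda_I$ (with its own socle module $V_I$ and its own $s_I \le s$) shows that for $j > d$, if the differential into $P_j$ is zero modulo $\fm^{s_I}$, then the multiplicity of the distinguished projective of $\Lambda_I$ must vanish. By cycling the choice of $I$ so that each $e_i$ plays the role of the distinguished idempotent, one concludes that under the blanket hypothesis, \emph{every} multiplicity $n_{j,i}$ with $j$ beyond a fixed bound vanishes. The arithmetic is the accounting step: there are $l$ indecomposables, and each application of the lemma kills one summand-type while consuming one homological degree's worth of the hypothesis; iterating, after at most $l$ steps past degree $d$ one clears all $l$ summand-types, which is why the bound takes the shape $l(d+1)$. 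I would verify that the exponent $l(d+1)$ is exactly what this bookkeeping yields, so that $P_{l(d+1)}=0$ and hence $\projdim U < l(d+1)$.

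The step I expect to be the \textbf{main obstacle} is making the corner-ring reduction completely airtight: one must check that $\varepsilon_I \Lambda \otimes_\Lambda Q_i$ is the correct indecomposable projective $\Lambda_I$-module (nonzero exactly when $i \in I$), that minimality of the resolution is preserved or at least that the relevant ``zero modulo $\fm^s$'' condition descends, and that the socle element of $V_I$ detecting $s_I$ genuinely pairs nontrivially with the image of $Q_i$ after applying $\varepsilon_I$. The subtlety is that $\Lambda_I$-projectivity and the passage of the radical $\fj \cap \Lambda_I$ versus $\fm\Lambda_I$ must be tracked carefully; the semiperfectness and the containment $\fj^n \subset \fm\Lambda$ used in the setup of $s_\Lambda$ have to be reaffirmed for each $\Lambda_I$, which is legitimate since each $\Lambda_I$ is again a ring of the type fixed in \S\ref{subsec-convention}.
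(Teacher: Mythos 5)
Your overall strategy --- iterate Lemma \ref{lem-Koh-Lee-NC} through the corner rings $\Lambda_I$, with $s=\max_I s_I$ guaranteeing that the single hypothesis is usable at every stage --- is indeed the skeleton of the paper's proof, which packages the iteration as an induction on $l$. But the mechanism you propose for descending to a corner ring has a genuine gap, and it is exactly the point you flag as the main obstacle. The functor $\varepsilon_I\Lambda\otimes_\Lambda-$, i.e.\ $A\mapsto \varepsilon_I A$, does \emph{not} carry projective $\Lambda$-modules to projective $\Lambda_I$-modules in general: it sends $Q_i=\Lambda e_i$ to $\varepsilon_I\Lambda e_i$, which equals $\Lambda_I e_i$ and is projective when $i\in I$, but for $i\notin I$ is typically neither zero nor projective over $\Lambda_I$ (your parenthetical ``nonzero exactly when $i\in I$'' is false). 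Consequently the plan of taking $I=\{i\}$ for each $i$ in parallel cannot be carried out: over the local ring $e_i\Lambda e_i$ the modules $e_i\Lambda e_j$ with $j\ne i$ need not be free, so $e_iP_\bullet$ is not a projective resolution. A second, smaller point: the distinguished idempotent of $\Lambda_I$ is not freely choosable; it is whichever $i\in I$ satisfies $\Soc(V_I)\otimes_{\Lambda_I}\Lambda_I e_i\ne 0$, so you cannot ``cycle'' so that each $e_i$ plays that role --- the elimination order is dictated by the socle pairings.

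The repair is to make the elimination sequential and nested, which is what forces the bound $l(d+1)$: each round costs $d+1$ homological degrees, not one, because Lemma \ref{lem-Koh-Lee-NC} only operates in degrees $>d$ \emph{of the current corner resolution}, which begins $d+1$ degrees after the previous one. Concretely, one first applies the lemma to $\Lambda$ itself to get $n_{j,1}=0$ for all $d<j\le l(d+1)$; only after this is $eP_{j+d+1}$ (with $e=\sum_{i\ge2}e_i$) a projective $e\Lambda e$-module for $0\le j\le (l-1)(d+1)$, so that the shifted complex $(eP_\bullet)[d+1]$ is a minimal projective resolution over $\Lambda'=e\Lambda e$ whose differentials are still zero modulo $\fm^{s_{\Lambda'}}$ because $s\ge s_{\Lambda'}$; the induction hypothesis then gives $eP_{l(d+1)}=0$, and combined with $n_{l(d+1),1}=0$ this yields $P_{l(d+1)}=0$. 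Your sketch gestures at this accounting but, as written, ``consuming one homological degree's worth'' per round would give a bound of the shape $d+l$ rather than $l(d+1)$; the nesting above is what the verification you defer actually requires.
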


\begin{proof}
The proof is by induction on $l$.
If $l =1$, then the proposition is a direct consequence of  Lemma \ref{lem-Koh-Lee-NC}.  We now turn to the general case.
For  $d<  j \le l(d+1)  $, since $\delta_{j+1}$ is zero modulo $ \fm^{s} $, 
again from Lemma \ref{lem-Koh-Lee-NC},
we have $n_{j,1}=0$. 
Put $e:= \sum_{i \ge 2} e_{i}$ and $\Lambda' :=e\Lambda e$.
Then consider the complex $P'_{\bullet} := (eP_{\bullet})[d+1]$ of $B$-modules.  
Namely the complex $P'_{\bullet}$ is defined by $P'_{i} := eP_{i+d+1}$ with the obvious differentials.
For $-1 <   j \le (l-1)(d+1)  $, since $n_{j+d+1 , 1}=0$, $P'_{j}=eP_{j+d+1}$
is a projective $\Lambda'$-module. 
Hence  $ P'_{\bullet}$
is the minimal projective resolution of $\Coker(P'_{1}\to P'_{0})$ at least
 in degree $\le (l-1)(d+1)$ such that for $  j \le (l-1)(d+1)  $, the differential 
 $\delta'_{j+1} : P'_{j+1} \to P'_{j}$ is zero modulo $ \fm^{s} $. 
From the induction hypothesis,    we have 
\[
  eP_{l(d+1)} =  P'_{(l-1)(d+1)}  =0 \text{ and } P_{l(d+1)}=0. 
 \]
\end{proof}

When $\Lambda$ is CM, we do not need the inductive argument and have a better result. In this case,  we have
$r := \depth (_{R}\Lambda) = d$
and for every $i$, $\Soc(V) \otimes_{\Lambda} Q_{i}\ne 0$. 
Set 
\[
 s'_{\Lambda} :=  \max_{i} \{ \inf \{ t \ge 1 |  \Soc (V)\otimes _\Lambda Q_{i} \nsubseteq V \fm^{t}  \otimes_\Lambda Q_{i} \} \} .
\]
Then we can see the following by an argument similar to that of Lemma \ref{lem-Koh-Lee-NC}.

\begin{thm}\label{thm-koh-lee-CM}
Suppose that $\Lambda$ is CM.
For $j >d$, if $\delta_{j+1}$ is zero modulo $ \fm^{s'_{\Lambda}} $,
then $P_{j}=0$ and $\projdim U \le d$.
\end{thm}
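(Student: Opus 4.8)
The plan is to carry out the single-step socle chase from the proof of Lemma~\ref{lem-Koh-Lee-NC}, but now simultaneously at \emph{every} indecomposable projective summand of $P_j$, so as to kill all of $P_j$ at once and thereby avoid the inductive passage to corner rings used in Theorem~\ref{thm-koh-lee}. The two features special to the CM case, recorded just before the statement, are exactly what make this possible: $r=\depth({}_R\Lambda)=d$, and $\Soc(V)\otimes_\Lambda Q_i\neq 0$ for \emph{every} $i$ (so that $s'_\Lambda$ is finite).

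Concretely, fix $j>d$ and assume $\delta_{j+1}$ is zero modulo $\fm^{s'_\Lambda}$. Since $r=d$, the right module $V$ has projective dimension $d$, so $\Tor^\Lambda_j(V,U)=0$ and $V\otimes_\Lambda P_\bullet$ is exact at the $j$-th spot; minimality of $P_\bullet$ then gives $\Soc(V)\otimes_\Lambda P_j\subseteq\Ker\bar{\delta}_j=\Image\bar{\delta}_{j+1}$, just as in Lemma~\ref{lem-Koh-Lee-NC}. The divisibility hypothesis gives $\Image\delta_{j+1}\subseteq\fm^{s'_\Lambda}P_j$, hence $\Image\bar{\delta}_{j+1}\subseteq V\fm^{s'_\Lambda}\otimes_\Lambda P_j$, and combining the two inclusions yields $\Soc(V)\otimes_\Lambda P_j\subseteq V\fm^{s'_\Lambda}\otimes_\Lambda P_j$. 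Suppose, for contradiction, that $P_j\neq 0$, say $n_{j,i}>0$ for some $i$; then $Q_i$ is a direct summand of $P_j$, and projecting the last inclusion onto the corresponding summand of $V\otimes_\Lambda P_j=\bigoplus_i (V\otimes_\Lambda Q_i)^{\oplus n_{j,i}}$ gives $\Soc(V)\otimes_\Lambda Q_i\subseteq V\fm^{s'_\Lambda}\otimes_\Lambda Q_i$. Writing $t_i:=\inf\{t\ge 1\mid \Soc(V)\otimes_\Lambda Q_i\not\subseteq V\fm^t\otimes_\Lambda Q_i\}$ for the inner threshold at $i$, we have $s'_\Lambda\ge t_i$, so $V\fm^{s'_\Lambda}\otimes_\Lambda Q_i\subseteq V\fm^{t_i}\otimes_\Lambda Q_i$, whereas $\Soc(V)\otimes_\Lambda Q_i\not\subseteq V\fm^{t_i}\otimes_\Lambda Q_i$ by the definition of $t_i$; this is the desired contradiction. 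Hence $P_j=0$, and since the vanishing of one term of a minimal projective resolution forces all later terms to vanish (Nakayama applied to the surjection produced by exactness), specializing to $j=d+1$ gives $\projdim U\le d$.

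The step carrying the real weight — and the one responsible for the improvement over Theorem~\ref{thm-koh-lee} — is the finiteness of $s'_\Lambda$, i.e.\ the assertion that $\Soc(V)\otimes_\Lambda Q_i\neq 0$ for every $i$. I would check this by observing that each $Q_i=\Lambda e_i$ is a direct summand of the MCM module ${}_R\Lambda$, hence is itself MCM of depth $d$; thus the central regular sequence $x_1,\dots,x_d$ is $Q_i$-regular and $V\otimes_\Lambda Q_i=Q_i/(x_1,\dots,x_d)Q_i$ has depth zero, so nonzero $R$-socle. Passing from the $R$-socle to the $\Lambda$-socle while keeping the $e_i$-component alive — via $\fm\Lambda\subseteq\fj$ together with $\fj^n\subseteq\fm\Lambda$, exactly as in the paragraph that establishes $\Soc(V_\Lambda)\neq 0$ — is the one piece of genuine bookkeeping; once it is in place, everything else transcribes directly from Lemma~\ref{lem-Koh-Lee-NC}.
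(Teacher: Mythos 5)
Your main argument --- running the socle chase of Lemma~\ref{lem-Koh-Lee-NC} simultaneously at every indecomposable summand of $P_{j}$, using that $V_{\Lambda}$ has projective dimension $r=d$ and that $s'_{\Lambda}$ dominates each local threshold $t_{i}$ --- is correct and is exactly what the paper intends: its entire proof is the remark that the result follows ``by an argument similar to that of Lemma~\ref{lem-Koh-Lee-NC}''. The trouble is in the step you yourself single out as carrying the real weight, namely the finiteness of $s'_{\Lambda}$, i.e.\ the assertion that $\Soc(V)\otimes_{\Lambda}Q_{i}\neq 0$ for \emph{every} $i$. Your reduction to $\Soc(V_{R})e_{i}\neq 0$ is fine (direct summands of the MCM module ${}_{R}\Lambda$ are MCM, so $Q_{i}/(x_{1},\dots,x_{d})Q_{i}$ has depth zero), but the passage to the $\Lambda$-socle does not go through as claimed. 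The paper's device for producing a nonzero element of $\Soc(V_{\Lambda})$ takes $w\in\Soc(V_{R})$ and multiplies on the right by $\fj^{m-1}$ for $m$ minimal with $w\fj^{m}=0$; right multiplication by $\fj^{m-1}$ does not preserve the $e_{i}$-component, so starting from a nonzero element of $\Soc(V_{R})e_{i}$ you land in $\Soc(V_{\Lambda})$ with no control over which idempotent component survives. Nothing in that paragraph keeps ``the $e_{i}$-component alive''.

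Worse, the assertion itself fails for a general CM ring $\Lambda$ in the paper's sense, so no bookkeeping can rescue it without an extra hypothesis. Take $R=k[[x]]$, $d=1$, and let $\Lambda=T_{2}(R)$ be the ring of upper triangular $2\times 2$ matrices over $R$, a free (hence MCM) $R$-module, with $e_{1}=E_{11}$, $e_{2}=E_{22}$ and maximal $\Lambda$-sequence $x$. Then $V=\Lambda/x\Lambda\cong T_{2}(k)$ and $\fj$ acts on $V$ through $kE_{12}$; an element $aE_{11}+bE_{12}+dE_{22}$ is killed by right multiplication by $E_{12}$ exactly when $a=0$, so $\Soc(V_{\Lambda})=kE_{12}\oplus kE_{22}$ and $\Soc(V)\otimes_{\Lambda}Q_{1}=\Soc(V)E_{11}=0$, even though $\Soc(V_{R})e_{1}=kE_{11}\neq 0$. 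In this situation $t_{1}$ is an infimum over the empty set, and the contradiction you derive at a summand $Q_{i}$ with $n_{j,i}>0$ is unavailable whenever $\Soc(V)\otimes_{\Lambda}Q_{i}=0$. To be fair, the paper states ``for every $i$, $\Soc(V)\otimes_{\Lambda}Q_{i}\neq 0$'' without proof in the paragraph preceding the theorem, so you are attempting to fill a gap it leaves open; but the claim genuinely needs more than ``$\Lambda$ is CM'' (it does hold, for instance, for the skew group rings in the paper's applications, where $V$ is a self-injective finite-dimensional algebra), and a complete proof must either supply such a hypothesis or restrict attention to those $i$ with $\Soc(V)\otimes_{\Lambda}Q_{i}\neq 0$, as Theorem~\ref{thm-koh-lee} in effect does by singling out $Q_{1}$.
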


\subsection{Exact order-raising endofunctors and the finiteness of global dimension}

We  keep the notation of the preceding subsection. 

\begin{defn}\label{defn-order-raising}
Let $\Phi : \proj(\Lambda) \to \proj(\Lambda)$ be an endofunctor.
We say that $\Phi$ is \emph{order-raising} if for every $i>0$, there exists  $e_{0}>0$ such that for every $e\ge e_{0}$ and
for every morphism $f:P \to Q$ in $\proj(\Lambda)$ which factors through $\fj Q$, $\Phi^{e}(f) : \Phi^{e}(P) \to \Phi^{e}(Q)$ factors through 
$\fj^{i} \Phi(Q)$. 
\end{defn}

\begin{defn}
A functor $\Phi$ between subcategories of abelian categories 
is said to \emph{have zero kernel} if $\Phi(N)\cong 0 \Rightarrow N \cong 0$.
\end{defn}

\begin{cor}\label{cor-endofunctor-NCCR}
Suppose that there exists an exact and order-raising endofunctor $\Phi : \proj(\Lambda) \to \proj(\Lambda)$
which has zero kernel.
Then $\Lambda$ has finite global dimension. Moreover if $\Lambda$ is CM, then $\Lambda$ has
global dimension $d$.
\end{cor}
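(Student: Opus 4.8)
The plan is to show that every $\Lambda$-module $U$ has finite projective dimension by transporting the minimal projective resolution of $U$ through $\Phi^{e}$ and appealing to the noncommutative Herzog-Koh-Lee theorem. Let $P_{\bullet}:\cdots \to P_{j+1}\xrightarrow{\delta_{j+1}}P_{j}\to\cdots\to P_{0}$ be the minimal projective resolution of $U$. By minimality, $\Image \delta_{j+1}\subseteq \fj P_{j}$, so each $\delta_{j+1}$ factors through $\fj P_{j}$. First I would fix the Herzog-Koh-Lee constant $s$ and choose $n$ with $\fj^{n}\subseteq \fm\Lambda$ (such $n$ exists by \cite[(20.6)]{Lam:2001ut}); since $\fm$ is central we then have $\fj^{ns}\subseteq(\fm\Lambda)^{s}=\fm^{s}\Lambda$. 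Applying the order-raising property with $i:=ns$ produces a single $e_{0}$, depending only on $i$, such that for all $e\ge e_{0}$ and every differential one has $\Image \Phi^{e}(\delta_{j+1})\subseteq \fj^{i}\Phi^{e}(P_{j})\subseteq \fm^{s}\Phi^{e}(P_{j})$. The uniformity of $e_{0}$ over all morphisms is exactly what lets a single $e$ work simultaneously for every differential of the resolution.

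Next I would argue that $\Phi^{e}(P_{\bullet})$ is itself the minimal projective resolution of $W:=\Coker(\Phi^{e}(\delta_{1}))$. It is a complex of projectives because $\Phi$ maps $\proj(\Lambda)$ into $\proj(\Lambda)$. Its exactness in positive degrees follows from the exactness of $\Phi$: each three-term piece $P_{j+1}\to P_{j}\to P_{j-1}$ is exact at $P_{j}$ for $j\ge 1$, hence so is its image under $\Phi^{e}$. Minimality holds because the previous step places each $\Phi^{e}(\delta_{j+1})$ inside $\fm^{s}\Phi^{e}(P_{j})\subseteq \fj\Phi^{e}(P_{j})$. Now all differentials of this minimal resolution are zero modulo $\fm^{s}$, so Theorem \ref{thm-koh-lee} gives $\Phi^{e}(P_{l(d+1)})=0$. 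Since $\Phi^{e}$ has zero kernel, $P_{l(d+1)}=0$, whence $\projdim U<l(d+1)$. As $U$ is arbitrary, $\gldim \Lambda\le l(d+1)<\infty$.

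For the CM case I would run the same argument with $s'_{\Lambda}$ in place of $s$ (taking $i:=ns'_{\Lambda}$) and invoke Theorem \ref{thm-koh-lee-CM} instead: this forces $\Phi^{e}(P_{j})=0$, hence $P_{j}=0$, for every $j>d$, so $\projdim U\le d$ and $\gldim \Lambda\le d$. The matching lower bound $\gldim \Lambda\ge d$ is obtained exactly as in the proof of Theorem \ref{thm-pure-finite-gldim-main} via \cite{Ramras:1969tx}, giving $\gldim \Lambda=d$.

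I expect the main obstacle to be the passage from the order-raising conclusion, which is phrased in the $\fj$-adic filtration, to the hypothesis of Herzog-Koh-Lee, which is phrased in the $\fm$-adic filtration of the central subring, together with verifying that $\Phi^{e}(P_{\bullet})$ is genuinely a minimal resolution rather than merely a complex. Both hinge on the inclusions $\fm\Lambda\subseteq\fj$ and $\fj^{n}\subseteq\fm\Lambda$ and on reading the exactness of $\Phi$ as preservation of three-term exact sequences of projectives; once these are pinned down, the zero-kernel hypothesis does the rest by descending the vanishing from $\Phi^{e}(P_{\bullet})$ back to $P_{\bullet}$.
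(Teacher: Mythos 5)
Your proof is correct and follows essentially the same route as the paper: pass from the $\fj$-adic to the $\fm$-adic condition via $\fj^{n}\subseteq\fm\Lambda$, apply $\Phi^{e}$ to the minimal resolution, invoke Theorem \ref{thm-koh-lee} (resp.\ Theorem \ref{thm-koh-lee-CM} plus the Ramras lower bound in the CM case), and descend the vanishing through the zero-kernel hypothesis. You in fact spell out two points the paper leaves implicit --- that $\Phi^{e}(P_{\bullet})$ is the \emph{minimal} resolution of $\Coker(\Phi^{e}(\delta_{1}))$ because $\fm^{s}\Phi^{e}(P_{j})\subseteq\fj\Phi^{e}(P_{j})$, and that a single $e$ works for all differentials simultaneously --- which is welcome but not a different argument.
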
 

\begin{proof}
Since for some $n$, $\fj^{n} \subset \fm \Lambda$, we may replace $\fj^{i} \Phi(Q)$ in Definition \ref{defn-order-raising}
with $\fm^{i} \Phi(Q)$.
 Let $U$ be an arbitrary finitely generated $\Lambda$-module
and 
\[
P_\bullet :\cdots \to P_{j+1} \to P_j \to P_{j-1} \to \cdots \to P_0 
\]
its minimal projective resolution. 
Since $\Phi$ is exact,  for every $e$,
$\Phi^{e}(P_{\bullet})$ is an exact sequence.
Since $\Phi$ is order-raising, if $e$ is sufficiently large, then
\[
\Phi^{e}(P_{l(d+1) }) \to \Phi^{e}(P_{l(d+1)-1}) \to \cdots \to \Phi^{e}(P_{0})
\]
is the first steps of the minimal projective resolution of $\Phi^{e}(U)$, 
whose differentials are zero  modulo $\fm^{s}$.  
From Theorem \ref{thm-koh-lee}, $\Phi^{e}(P_{l(d+1) })=0$.
Since $\Phi$ has zero kernel, $P_{l(d+1)} = 0$.
Therefore $\projdim U \le l(d+1)$ and hence $\gldim \Lambda< \infty$.

For the second assertion, using Theorem \ref{thm-koh-lee-CM} instead, we can similarly show that $\gldim \Lambda \le d$.
On the other hand, from \cite{Ramras:1969tx} (see also \cite{Leuschke:2011te}), $\gldim \Lambda \ge d$,
and the corollary follows.
\end{proof}

\section{Noncommutative Frobeniuses and the finiteness of global dimension}\label{sec-NC-Frob-gldim}

In this section we will define the Frobenius morphism for two classes of noncommutative rings,
endomorphism rings of  modules  and skew group rings. Then we study when the Frobenius pullback functor
for such a ring satisfies the conditions in the last section
for that the ring has finite global dimension. 

We now suppose that $R$ is normal and of characteristic $p>0$.

\subsection{Frobeniuses of endomorphism rings}

Let $M$ be a nonzero reflexive $R$-module and $M^{1/p}$ the corresponding $R^{1/p}$-module
under the isomorphism $R^{1/p}\cong R$, $f^{1/p} \leftrightarrow f$.  Put $E := \End_{R}(M)$ and $E^{1/p} := \End_{R^{1/p}}(M^{1/p})$.
The Hom set
\[
H:= \Hom_{R}(M,M^{1/p})
 \]
  has a natural $(E^{1/p},E)$-bimodule structure. 

\begin{defn}[\cite{Yasuda:2009wn}]\label{defn-Frobenius-for-end}
We define the \emph{Frobenius 
pullback} of $E$ as
\[
\bF^{*}:= H \otimes_{E}- : \mods(E) \to \mods(E^{1/p}),
\]
	and the \emph{Frobenius pushforward} as its right adjoint
\[
\bF_{*}:= \Hom_{E^{1/p}}( H, -  )  :\mods(E^{1/p})\to \mods(E) .
\]
We call the pair  $(\bF^{*},\bF_{*})$ the \emph{Frobenius morphism} of $E$.
\end{defn}

Note that $E$ and $E^{1/p}$ are canonically isomorphic and one may regard the above functors 
as endofunctors.

\begin{ex}
If $M=R$, then $H= R^{1/p}$, $E=R$ and $E^{1/p} = R^{1/p}$.
Therefore $\bF^{*}$ and $\bF_{*}$ are respectively the pullback and pushforward of the ordinary
relative Frobenius $R \hookrightarrow R^{1/p}$.
\end{ex}

\begin{defn}
Let $F:R \to R$, $f\mapsto f^{p}$ be the absolute Frobenius map of $R$.
We define the \emph{reflexive pullback} of an $R$-module $N$ by $F$ as
 $F^{\times} N := (F^{*} N)^{\refl}$ .
\end{defn} 
 
\begin{defn} 
Given an $R$-module $N$, we define as follows:
\begin{enumerate}
\item  $N$ is  \emph{$F^{\times}$-closed}   if $F^{\times}N \in \add(N)$.
\item $N$ is \emph{$F_{*}$-closed} if $F_{*}N \in \add(N)$.
\item $N$ is \emph{strongly $F_{*}$-closed} 
if $F_{*}N$ is an additive generator of $\add(N)$.
\end{enumerate}
\end{defn}

\begin{lem}\label{lem-proj-right-left}
Let $N$ be a reflexive $R$-module. 
Consider the following conditions:
\begin{enumerate}
\item $N \in \add(M)$.
\item $\Hom_{R}(M,N)$ is a projective right $E$-module.
\item $\Hom_{R}(N,M)$ is a projective left $E$-module.
\end{enumerate}
Then $(1) \Leftrightarrow (2) \Rightarrow (3)$. 
\end{lem}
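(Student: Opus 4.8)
The plan is to base everything on the projectivization equivalence already used in the proof of Theorem~\ref{thm-pure-finite-gldim-main}, now applied to $M$ over $R$. Writing $\alpha := \Hom_R(M,-)$, this restricts to an equivalence $\add(M) \xrightarrow{\sim} \proj(E^{\op})$ onto the finitely generated projective \emph{right} $E$-modules, sending $M$ to the regular module $E = \Hom_R(M,M)$, with quasi-inverse $-\otimes_E M$. The two easy implications then fall out immediately. For $(1)\Rightarrow(2)$: if $N\in\add(M)$, then $\Hom_R(M,N)$ is a direct summand of $\Hom_R(M, M^{\oplus l}) = E^{\oplus l}$ as a right $E$-module, hence projective. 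For $(1)\Rightarrow(3)$ (which, granted the equivalence of $(1)$ and $(2)$, also yields $(2)\Rightarrow(3)$), I would apply the contravariant functor $\Hom_R(-,M)$ instead: if $N$ is a summand of $M^{\oplus l}$, then $\Hom_R(N,M)$ is a summand of $\Hom_R(M^{\oplus l},M) = E^{\oplus l}$ as a \emph{left} $E$-module (here $E$ acting by postcomposition), hence projective.

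The real content is $(2)\Rightarrow(1)$, and this is where I expect the main difficulty. Suppose $\Hom_R(M,N)$ is a projective right $E$-module, so it is a summand of some $E^{\oplus l}$. Applying $-\otimes_E M$ shows that $X := \Hom_R(M,N)\otimes_E M$ is a summand of $M^{\oplus l}$, hence $X \in \add(M)$; in particular $X$ is reflexive, $R$ being normal. Next I would consider the counit $\epsilon_N : X = \Hom_R(M,N)\otimes_E M \to N$ of the adjunction $(-\otimes_E M,\ \Hom_R(M,-))$, namely $\phi\otimes m \mapsto \phi(m)$. Both source and target are reflexive ($N$ by hypothesis), so it suffices to prove that $\epsilon_N$ is an isomorphism, and for this it is enough to check it in codimension one: a map of reflexive modules over a normal domain that is an isomorphism at every height-one prime is an isomorphism, since a reflexive module equals the intersection of its localizations at the height-one primes.

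To verify $\epsilon_N$ in codimension one, I would localize at a height-one prime $\mathfrak p$. Since $M$ is finitely presented, both $\Hom_R(M,-)$ and $-\otimes_E M$ commute with this localization and $E_{\mathfrak p} = \End_{R_{\mathfrak p}}(M_{\mathfrak p})$. Now $R_{\mathfrak p}$ is a discrete valuation ring and $M_{\mathfrak p}$ is free of positive rank (as $M$ is a nonzero torsion-free module over a domain), hence a progenerator of $\mods(R_{\mathfrak p})$. Therefore $\Hom_{R_{\mathfrak p}}(M_{\mathfrak p},-)$ and $-\otimes_{E_{\mathfrak p}} M_{\mathfrak p}$ are mutually inverse Morita equivalences, so the localized counit $(\epsilon_N)_{\mathfrak p} = \epsilon_{N_{\mathfrak p}}$ is an isomorphism. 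Thus $\epsilon_N$ is a global isomorphism and $N \cong X \in \add(M)$, which closes $(2)\Rightarrow(1)$. The one point that will require care is keeping the left/right bookkeeping straight across the two adjunctions and the opposite ring, and making sure $X$ is genuinely reflexive so that the codimension-one comparison applies; the Morita step over the discrete valuation ring is then routine.
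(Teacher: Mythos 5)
Your proof is correct, and on the easy implications it coincides with the paper's: $(1)\Rightarrow(2)$ and $(1)\Rightarrow(3)$ are both just the observation that an additive (co- or contravariant) Hom functor carries a summand of $M^{\oplus l}$ to a summand of $E^{\oplus l}$. The difference is in the substantive direction $(2)\Rightarrow(1)$: the paper disposes of it in one line by citing the Iyama--Reiten equivalence $\Hom_{R}(M,-):\refls(R)\xrightarrow{\sim}\refls(E^{\op})$, which restricts to $\add(M)\xrightarrow{\sim}\proj(E^{\op})$, whereas you re-prove exactly the piece of that equivalence that is needed. Your argument is sound: $X=\Hom_{R}(M,N)\otimes_{E}M$ is a summand of $M^{\oplus l}$ (no reflexive hull is needed here, unlike for the quasi-inverse on all of $\refls(E^{\op})$), hence reflexive since $M$ is; the counit $X\to N$ is a map of finitely generated reflexive modules over the normal domain $R$, so it suffices to check it at height-one primes; and there $R_{\mathfrak p}$ is a discrete valuation ring, $M_{\mathfrak p}$ is a nonzero free module and hence a progenerator, so the counit localizes to the counit of a Morita equivalence. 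This is essentially the standard proof of the cited equivalence, so the underlying mathematics is the same; what your version buys is a self-contained argument that makes explicit where the normality of $R$ and the reflexivity of $M$ and $N$ enter, at the cost of the left/right and localization bookkeeping that the paper outsources to the reference.
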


\begin{proof}
We have an equivalence of categories of reflexive modules
\[
\Hom_{R}(M,-) : \refls(R) \xrightarrow{\sim} \refls(E^{\op}),
\] which restricts to 
an equivalence $\add(M) \xrightarrow{\sim} \proj(E^{\op})$ (see \cite{Iyama:2008io}). This shows $(1) \Leftrightarrow (2) $.

If  $N \in \add(M)$, then $_{E}\Hom_{R}(N,M)$ is a direct summand of a free module, and  projective.
Hence $(1) \Rightarrow (3)$.
\end{proof}

\begin{prop}\label{prop-F*-closed}
The following are equivalent:
\begin{enumerate}
\item $\bF^{*}$ is exact.
\item $H $ is a projective right $E$-module.
\item $M$ is $F_{*}$-closed.
\end{enumerate}
\end{prop}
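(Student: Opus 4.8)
The plan is to prove the two equivalences $(1)\Leftrightarrow(2)$ and $(2)\Leftrightarrow(3)$ separately, the first being a general fact about tensor functors and the second an immediate application of Lemma \ref{lem-proj-right-left}.

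For $(1)\Leftrightarrow(2)$, I would argue that $\bF^{*}=H\otimes_{E}-$ is always right exact, so it is exact if and only if it is left exact, i.e.\ if and only if $H$ is flat as a right $E$-module. Since $E=\End_{R}(M)$ is Noetherian and, by the convention of \S\ref{subsec-convention} together with \cite[(23.3)]{Lam:2001ut}, semiperfect, and since $H$ is a finitely generated right $E$-module, flatness is equivalent to projectivity. This yields $(1)\Leftrightarrow(2)$.

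For $(2)\Leftrightarrow(3)$, the key observation is that, viewed as an $R$-module, $M^{1/p}$ is nothing but the Frobenius pushforward $F_{*}M$ under the identification $R^{1/p}\cong R$. I would first check that ${}_{R}M^{1/p}$ is reflexive: $M^{1/p}$ is reflexive over $R^{1/p}$, hence satisfies Serre's condition $S_{2}$, and since $R\subset R^{1/p}$ is a finite extension of normal domains, restriction of scalars preserves depth and therefore the $S_{2}$ property, so ${}_{R}M^{1/p}$ is a torsion-free $S_{2}$-module over the normal domain $R$, i.e.\ reflexive. Now apply Lemma \ref{lem-proj-right-left} with $N:={}_{R}M^{1/p}=F_{*}M$. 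Its condition $(2)$ reads that $\Hom_{R}(M,N)=H$ is a projective right $E$-module, and its condition $(1)$ reads that $N=F_{*}M\in\add(M)$, which is precisely the statement that $M$ is $F_{*}$-closed. The equivalence $(1)\Leftrightarrow(2)$ of that lemma then gives $(2)\Leftrightarrow(3)$ of the proposition.

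The main point to verify carefully is that the right $E$-module structure on $H=\Hom_{R}(M,M^{1/p})$ coming from Definition \ref{defn-Frobenius-for-end} (precomposition by $\End_{R}(M)$) agrees with the right $E$-module structure used in Lemma \ref{lem-proj-right-left} on $\Hom_{R}(M,N)$; both are given by precomposition, so they coincide, but this identification---together with the reflexivity of $F_{*}M$---is the only genuinely non-formal input. Everything else reduces to the flat-equals-projective principle over a semiperfect Noetherian ring and the already-established equivalence of the lemma.
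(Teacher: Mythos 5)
Your proof is correct and follows essentially the same route as the paper: the $(1)\Leftrightarrow(2)$ equivalence via ``finitely generated flat equals projective over a Noetherian ring,'' and $(2)\Leftrightarrow(3)$ by applying Lemma \ref{lem-proj-right-left} to $N={}_{R}M^{1/p}=F_{*}M$. The only difference is that you explicitly verify the reflexivity of ${}_{R}M^{1/p}$ and the agreement of the right $E$-module structures, hypotheses the paper leaves implicit; both checks are correct.
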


\begin{proof}
(1)$\Leftrightarrow$(2): Since $E$ is Noetherian, $H_{E}$ is flat if and only if it is projective, which shows this equivalence.

(2)$\Leftrightarrow$(3): From the preceding lemma, $H $ is a projective right $E$-module
if and only if $_{R}M^{1/p} =F_{*} M \in \add(M) $, that is, $M$ is $F_{*}$-closed.
\end{proof}

\begin{prop}\label{prop-str-F*-closed-zero-kernel}
If $M$ is strongly $F_{*}$-closed, then for every nonzero $E$-module $A$, $\bF^{*}A$ is nonzero. 
Namely $\bF^{*}$ has zero kernel.
\end{prop}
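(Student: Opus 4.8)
The plan is to deduce the zero-kernel property from the ``generator half'' of the strongly $F_{*}$-closed hypothesis by an elementary splitting argument; in particular, exactness of $\bF^{*}$ (Proposition \ref{prop-F*-closed}) will not be needed. Recall that $M$ being strongly $F_{*}$-closed means that $F_{*}M = {}_{R}M^{1/p}$ is an additive generator of $\add(M)$, i.e.\ $\add({}_{R}M^{1/p}) = \add({}_{R}M)$; in particular $M \in \add(M^{1/p})$, so there is an $l \ge 0$ together with $R$-linear maps $\iota : M \to (M^{1/p})^{\oplus l}$ and $\pi : (M^{1/p})^{\oplus l} \to M$ satisfying $\pi \circ \iota = \mathrm{id}_{M}$.

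First I would apply the additive functor $\Hom_{R}(M,-)$ to this split monomorphism. Since $\Hom_{R}(M,-)$ commutes with finite direct sums and is natural for the precomposition action, it sends $\iota,\pi$ to right $E$-module maps
\[
\Hom_{R}(M,M) \longrightarrow H^{\oplus l} \longrightarrow \Hom_{R}(M,M)
\]
whose composite is the identity, where $H = \Hom_{R}(M,M^{1/p})$. As $\Hom_{R}(M,M) = E$ is exactly the right regular module $E_{E}$ (the precomposition action being ring multiplication), this exhibits $E_{E}$ as a direct summand of $H^{\oplus l}$; in other words $H$ is a generator among right $E$-modules. Equivalently, under the projectivization $\Hom_{R}(M,-)\colon \add(M)\xrightarrow{\sim}\proj(E^{\op})$ of Lemma \ref{lem-proj-right-left}, an additive generator of $\add(M)$ is carried to a progenerator.

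Finally, let $A$ be a (left) $E$-module with $\bF^{*}A = H\otimes_{E}A \cong 0$. Writing $H^{\oplus l}\cong E\oplus C$ as right $E$-modules for some right $E$-module $C$ and tensoring over $E$ with ${}_{E}A$ gives
\[
0 = (H\otimes_{E}A)^{\oplus l} \cong (E\otimes_{E}A)\oplus(C\otimes_{E}A) \cong A\oplus(C\otimes_{E}A),
\]
which forces $A\cong 0$. This is precisely the assertion that $\bF^{*}$ has zero kernel.

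There is no deep obstacle here; the one point demanding care is the bookkeeping of left versus right $E$-actions. I must check that the retraction produced by $\Hom_{R}(M,-)$ really is a retraction of \emph{right} $E$-modules, so that the complement $C$ is a right $E$-module and tensoring on the right with the left module $A$ is legitimate and compatible with the direct-sum decomposition. Granting this, the argument is entirely formal, and notably uses only the inclusion $M \in \add(M^{1/p})$ rather than the full equality of additive closures.
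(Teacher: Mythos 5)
Your argument is correct and is essentially the paper's own proof: both establish that $E_{E}$ is a direct summand of $H^{\oplus l}$ (the paper via ``$H_{E}$ contains every indecomposable projective as a summand,'' you via applying $\Hom_{R}(M,-)$ to the splitting $M \mid (M^{1/p})^{\oplus l}$, which is the same fact under the projectivization equivalence) and then conclude by tensoring with $A$. Your side remark that only the inclusion $M \in \add(M^{1/p})$ is used, and your check that the retraction is one of right $E$-modules, are both accurate but do not change the substance.
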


\begin{proof}
If $M$ is strongly $F_{*}$-closed, then $H_{E}$ contains every indecomposable projective $E$-module
as a direct summand. Hence for some $l$, $H^{\oplus l}$ contains $E_{E}$ as a direct summand.
Hence $H^{\oplus l} \otimes_{E}A  \ne 0$ and $H \otimes_{E}A  \ne 0$.
\end{proof}

\begin{prop}\label{prop-Ftimes-closed->preserves-projectives}
Consider the following conditions:
\begin{enumerate}
\item  $\bF^{*}$ preserves projective modules.
\item  $\bF_{*}$ is exact.
\item  $H$ is a projective left $E^{1/p}$-module.
\item $M$ is $F^{\times}$-closed.
\end{enumerate}
Then $(1)\Leftrightarrow(2)\Leftrightarrow(3)\Leftarrow(4)$.
\end{prop}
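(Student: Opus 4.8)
The plan is to route all three of (1), (2), (3) through the single left $E^{1/p}$-module $H$, showing that each is equivalent to $H$ being a projective left $E^{1/p}$-module, and then to deduce (3) from (4) by rewriting $H$ as a Hom-module to which Lemma~\ref{lem-proj-right-left} applies. The formal equivalences come first. Since $\bF^{*}=H\otimes_{E}-$, every finitely generated projective left $E$-module is a summand of some $E^{\oplus n}$, and $\bF^{*}(E^{\oplus n})\cong H^{\oplus n}$; hence $\bF^{*}$ preserves projectives exactly when $\bF^{*}(E)=H$ is projective over $E^{1/p}$, which is $(1)\Leftrightarrow(3)$. For $(2)\Leftrightarrow(3)$, the functor $\bF_{*}=\Hom_{E^{1/p}}(H,-)$ is always left exact, and it is exact precisely when $H$ is a projective left $E^{1/p}$-module, by the standard characterization of projectivity via exactness of the covariant Hom functor; here $E^{1/p}$ is Noetherian and $H$ finitely generated, so there is no subtlety. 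This yields $(1)\Leftrightarrow(2)\Leftrightarrow(3)$.

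The substantive step is $(4)\Rightarrow(3)$. I would first rewrite $H$ over $R^{1/p}$. By the base-change (tensor--Hom) adjunction for $R\hookrightarrow R^{1/p}$,
\[
H=\Hom_{R}(M,M^{1/p})\cong\Hom_{R^{1/p}}(R^{1/p}\otimes_{R}M,\,M^{1/p}),
\]
and since $M^{1/p}$ is reflexive and $R^{1/p}$ is normal, the reflexivization map induces an isomorphism
\[
\Hom_{R^{1/p}}(R^{1/p}\otimes_{R}M,\,M^{1/p})\cong\Hom_{R^{1/p}}\big((R^{1/p}\otimes_{R}M)^{\refl},\,M^{1/p}\big).
\]
Under the identification $R^{1/p}\cong R$ one has $(R^{1/p}\otimes_{R}M)^{\refl}=(F^{*}M)^{\refl}=F^{\times}M$, so (after checking the above isomorphisms are left $E^{1/p}$-linear) I obtain $H\cong\Hom_{R^{1/p}}(F^{\times}M,M^{1/p})$ as left $E^{1/p}$-modules, with $F^{\times}M$ reflexive over $R^{1/p}$. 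Now I would apply Lemma~\ref{lem-proj-right-left} over the ring $R^{1/p}$, with $M^{1/p}$ and $E^{1/p}=\End_{R^{1/p}}(M^{1/p})$ in place of $M$ and $E$: the hypothesis that $M$ is $F^{\times}$-closed says $F^{\times}M\in\add(M)$, which under $R^{1/p}\cong R$ and $M^{1/p}\leftrightarrow M$ is exactly $F^{\times}M\in\add(M^{1/p})$, i.e. condition (1) of that lemma for $N=F^{\times}M$. Its implication $(1)\Rightarrow(3)$ then gives that $\Hom_{R^{1/p}}(F^{\times}M,M^{1/p})\cong H$ is a projective left $E^{1/p}$-module, which is (3).

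The main obstacle is the bookkeeping in the second paragraph: making the identification $H\cong\Hom_{R^{1/p}}(F^{\times}M,M^{1/p})$ fully precise, in particular verifying that the reflexive-hull invariance $\Hom_{R^{1/p}}(N,W)\cong\Hom_{R^{1/p}}(N^{\refl},W)$ for reflexive $W$ (which expresses that $(-)^{\refl}$ is the reflection onto reflexive $R^{1/p}$-modules) is functorial enough to carry the left $E^{1/p}$-action, and carefully tracking the Frobenius twist $R\cong R^{1/p}$ so that ``$F^{\times}$-closed'' over $R$ matches membership in $\add(M^{1/p})$ over $R^{1/p}$. Once this identification is in hand, everything reduces to the already-established Lemma~\ref{lem-proj-right-left}, and the equivalences $(1)\Leftrightarrow(2)\Leftrightarrow(3)$ are routine.
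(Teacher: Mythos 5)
Your proposal is correct and follows essentially the same route as the paper: the equivalences $(1)\Leftrightarrow(2)\Leftrightarrow(3)$ are the standard characterizations (the paper simply calls them ``obvious''), and for $(4)\Rightarrow(3)$ the paper likewise rewrites $H\cong\Hom_{R^{1/p}}(F_{\rel}^{*}M,M^{1/p})\cong\Hom_{R^{1/p}}(F_{\rel}^{\times}M,M^{1/p})$ via adjunction and the reflexivity of $M^{1/p}$, then invokes Lemma~\ref{lem-proj-right-left}. The bookkeeping you flag (left $E^{1/p}$-linearity of the isomorphisms and the Frobenius twist) is exactly what the paper elides, and your handling of it is sound.
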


\begin{proof}
(1)$\Leftrightarrow$(2)$\Leftrightarrow$(3): Obvious. 

(3)$\Leftarrow$(4): 
Let $F_{\rel}$ denote the relative Frobenius map, $R \hookrightarrow R^{1/p}$.
Define the reflexive pullback $F_{\rel}^{\times}$ similarly.
We have isomorphisms of left $E^{1/p}$-modules,
\[
H  \cong \Hom_{R^{1/p}}(F_{\rel}^{*}M  , M^{1/p})  
\cong \Hom_{R^{1/p}}(F_{\rel}^{\times}M  , M^{1/p}).
\]
Here the left isomorphism follows from the adjunction and the right from the fact that $M^{1/p}$ is reflexive.
Now (3)$\Leftarrow$(4) follows from Lemma \ref{lem-proj-right-left}.
\end{proof}

\begin{cor}\
If $M$ is strongly $F_{*}$-closed and $F^{\times}$-closed,
and if $\bF^{*}|_{\proj(E)}$ is  order-raising (regarded as an endofunctor), 
 then  $E$ has finite global dimension. Moreover if $E$ is CM, then 
its global dimension is $d$. 
\end{cor}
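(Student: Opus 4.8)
The plan is to invoke Corollary~\ref{cor-endofunctor-NCCR} with $\Lambda = E$, taking for the endofunctor $\Phi := \bF^{*}|_{\proj(E)}$; here I use the canonical isomorphism $E \cong E^{1/p}$ (noted after Definition~\ref{defn-Frobenius-for-end}) to regard $\bF^{*}$ as acting from $\mods(E)$ to itself. It then suffices to verify that, under the three hypotheses, $\Phi$ is a well-defined exact endofunctor of $\proj(E)$ that is order-raising and has zero kernel; the two conclusions on global dimension follow at once.

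First I would check that $\bF^{*}$ preserves projectives, so that $\Phi$ genuinely lands in $\proj(E)$: since $M$ is $F^{\times}$-closed, this is precisely the implication $(4)\Rightarrow(1)$ of Proposition~\ref{prop-Ftimes-closed->preserves-projectives}. For exactness, observe that strong $F_{*}$-closedness means $F_{*}M$ is an additive generator of $\add(M)$, so in particular $F_{*}M \in \add(M)$ and $M$ is $F_{*}$-closed; the equivalence $(3)\Leftrightarrow(1)$ of Proposition~\ref{prop-F*-closed} then gives that $\bF^{*}$ is exact. The zero-kernel property is supplied directly by Proposition~\ref{prop-str-F*-closed-zero-kernel}, again from strong $F_{*}$-closedness. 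Finally, the order-raising property of $\bF^{*}|_{\proj(E)}$ is assumed outright. With all four inputs in place, Corollary~\ref{cor-endofunctor-NCCR} yields $\gldim E < \infty$, and $\gldim E = d$ whenever $E$ is CM.

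Since the statement is essentially an assembly of the three preceding propositions with Corollary~\ref{cor-endofunctor-NCCR}, I do not anticipate a substantive obstacle. The only point requiring care is the bookkeeping that turns $\bF^{*}\colon \mods(E) \to \mods(E^{1/p})$ into a bona fide endofunctor of $\proj(E)$: one must carry the identification $E \cong E^{1/p}$ consistently through each citation, and confirm that the functor for which order-raising is hypothesized is the very same functor whose exactness, preservation of projectives, and zero kernel were established. Once this consistency is checked, the proof is a direct application of the earlier results.
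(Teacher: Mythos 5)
Your proposal is correct and follows exactly the paper's argument: the paper likewise assembles Propositions \ref{prop-F*-closed}, \ref{prop-str-F*-closed-zero-kernel} and \ref{prop-Ftimes-closed->preserves-projectives} to see that $\bF^{*}$ preserves projectives and that its restriction to $\proj(E)$ is exact with zero kernel, and then invokes Corollary \ref{cor-endofunctor-NCCR}. Your version merely spells out the citations and the identification $E\cong E^{1/p}$ more explicitly than the paper's two-line proof does.
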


\begin{proof}
The functor $\bF^{*}$ preserves projectives and its restriction $\bF^{*}|_{\proj E}$
is exact and order-raising and has zero kernel. Now the corollary follows from 
Corollary \ref{cor-endofunctor-NCCR}.
\end{proof}

\subsection{The case where $M$ is a commutative regular local ring}

In this subsection, we additionally suppose that the $R$-module $M$ is also a commutative regular local ring 
such that the $R$-module structure on $M$ is the one induced from the ring extension $R \subset M$. 

\begin{lem}\label{lem-regular-strongly-closed}
$_{R}M$ is strongly $F_{*}$-closed and $\bF^{*}$ is exact and has zero kernel.
\end{lem}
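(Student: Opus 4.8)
The plan is to reduce all three assertions to the structure of ${}_{R}M^{1/p}$, which for a regular $M$ is controlled by Kunz's theorem; once I know that ${}_{R}M^{1/p}$ is a direct sum of copies of ${}_{R}M$, strong $F_{*}$-closedness is immediate and the exactness and zero-kernel properties of $\bF^{*}$ follow at once from the two propositions already proved.

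First I would record that $M$ is F-finite: by the standing convention $M$ is module-finite over $R$, and $R$ is F-finite, so $R^{1/p}$ is a finite $R$-module; then $M^{1/p}$, being finite over $R^{1/p}$, is a finite $R$-module and a fortiori a finite $M$-module. The key input is Kunz's characterization of regularity \cite{Kunz:1969to}: since $M$ is a regular local ring of characteristic $p$, the relative Frobenius $M \hookrightarrow M^{1/p}$ is flat. Being a module-finite flat extension of the local ring $M$, it is in fact free, so there is an isomorphism of $M$-modules ${}_{M}M^{1/p}\cong M^{\oplus n}$, where $n$ is the rank of ${}_{M}M^{1/p}$.

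Restricting scalars along $R\subset M$, this same isomorphism is one of $R$-modules, so
\[
F_{*}M = {}_{R}M^{1/p} \cong ({}_{R}M)^{\oplus n}.
\]
In particular $\add(F_{*}M)=\add({}_{R}M)$, i.e.\ $F_{*}M$ is an additive generator of $\add({}_{R}M)$, which is precisely the statement that ${}_{R}M$ is strongly $F_{*}$-closed. Since an additive generator lies in its own additive closure, ${}_{R}M$ is in particular $F_{*}$-closed, so condition (3) of Proposition \ref{prop-F*-closed} holds and $\bF^{*}$ is exact; and Proposition \ref{prop-str-F*-closed-zero-kernel} says exactly that strong $F_{*}$-closedness forces $\bF^{*}$ to have zero kernel. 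This establishes all three claims. The only substantive step is the passage from regularity of $M$ to freeness of ${}_{M}M^{1/p}$ via Kunz; everything after that is descent of scalars and citation of the earlier propositions, so I do not expect a genuine obstacle here.
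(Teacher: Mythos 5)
Your proof is correct and follows essentially the same route as the paper: regularity of $M$ gives that $M^{1/p}$ is a free $M$-module (the paper states this directly, you justify it via Kunz plus module-finiteness), whence $_{R}M^{1/p}\cong ({}_{R}M)^{\oplus n}$ gives strong $F_{*}$-closedness, and the remaining assertions are then read off from Propositions \ref{prop-F*-closed} and \ref{prop-str-F*-closed-zero-kernel}, exactly as in the paper.
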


\begin{proof}
Since $M$ is regular, 
$F_{*} M =M^{1/p}$ is  a free $M$-module. Hence $_{R}M$ is strongly $F_{*}$-closed. 
The rest assertions follow from Propositions \ref{prop-F*-closed} and \ref{prop-str-F*-closed-zero-kernel}.
\end{proof}

\begin{prop}\label{prop-CM-pure-Ftimes-closed}
Suppose that  $E = \End_{R}(M)$ is CM and that the ring extension $R \subset M$ is pure. Then  $M$ is $F^{\times}$-closed and $\bF^{*}$ preserves projectives.
\end{prop}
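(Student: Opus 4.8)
The plan is to prove the first assertion, that $M$ is $F^{\times}$-closed, in the sharp form $F^{\times}M\cong M$; the second assertion then follows at once, since Proposition \ref{prop-Ftimes-closed->preserves-projectives} supplies the implication (4)$\Rightarrow$(1), i.e.\ $F^{\times}$-closedness of $M$ forces $\bF^{*}$ to preserve projective modules. Everything therefore reduces to analysing the single reflexive $R$-module $F^{\times}M=(F^{*}M)^{\refl}$, where $F^{*}M=M\otimes_{R,F}R$ is the pullback along the absolute Frobenius $F\colon R\to R$.

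First I would observe that $F^{*}M$ is not merely an $R$-module but carries a natural $M$-module structure through its left tensor factor: the assignment $m'\cdot(m\otimes r)=m'm\otimes r$ is well defined, because moving a scalar $a\in R$ across $\otimes_{R,F}$ only replaces it by $a^{p}$, which commutes with the $M$-action. This structure is inherited by $\Hom_{R}(F^{*}M,R)$, hence by $F^{\times}M=\Hom_{R}(\Hom_{R}(F^{*}M,R),R)$, so $F^{\times}M$ is an $M$-module. Next I would compute ranks. Applying $\otimes_{R,F}R$ to a finite presentation of $M$ replaces the presentation matrix $(a_{ij})$ by $(a_{ij}^{p})$, and since $x\mapsto x^{p}$ is an injective endomorphism of the fraction field of $R$ it preserves the rank of the matrix; therefore $\rank_{R}F^{*}M=\rank_{R}M$. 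Consequently $\rank_{M}F^{\times}M=\rank_{M}F^{*}M=\rank_{R}F^{*}M/\rank_{R}M=1$, so $F^{\times}M$ is a module of rank one over $M$.

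Finally I would pass from $R$ to $M$. Because $M$ is module-finite over the normal domain $R$ and both rings are normal, an $M$-module is reflexive over $R$ if and only if it is reflexive over $M$, and the two reflexive hulls coincide; hence $F^{\times}M$, being reflexive over $R$ by construction, is a reflexive $M$-module of rank one. Since $M$ is a regular local ring it is a unique factorization domain, so its divisor class group vanishes and every rank-one reflexive module is free; thus $F^{\times}M\cong M$. In particular $F^{\times}M\in\add({}_{R}M)$, i.e.\ $M$ is $F^{\times}$-closed, and Proposition \ref{prop-Ftimes-closed->preserves-projectives} completes the proof.

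I expect the main obstacle to be the two bookkeeping facts of the middle paragraph—the invariance of generic rank under Frobenius pullback and the coincidence of the $R$- and $M$-reflexive hulls of the $M$-module $F^{*}M$—both standard but needing care, since the natural comparison map $F_{\rel}^{*}M\to M^{1/p}$ is badly degenerate when $R\subset M$ is inseparable, so $F^{\times}M$ cannot be read off from any naive identification with $M^{1/p}$. It is worth remarking that this route uses only that $M$ is a regular local ring finite over $R$: the hypotheses that $\End_{R}(M)$ is Cohen--Macaulay and that $R\subset M$ is pure are not logically required here, although they are of course essential to the surrounding results that feed $F^{\times}$-closedness into the finiteness of global dimension.
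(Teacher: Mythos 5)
There is a genuine gap, and it is exactly at the point you flag as ``bookkeeping''. The module $F^{*}M=M\otimes_{R,F}R$ carries two different $R$-actions, and your middle paragraph conflates them. The reflexive hull in $F^{\times}M=(F^{*}M)^{\refl}$, and the membership $F^{\times}M\in\add({}_{R}M)$ to be proved, refer to the action of the \emph{right} tensor factor (the target of $F$); equivalently $F^{*}M\cong R^{1/p}\otimes_{R}M$ as an $R^{1/p}$-module. Your $M$-action sits on the \emph{left} factor, and its restriction to $R\subset M$ is not the right-factor action but its Frobenius twist: $a\cdot(m\otimes r)=am\otimes r=m\otimes a^{p}r$. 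Hence the step $\rank_{M}F^{*}M=\rank_{R}F^{*}M/\rank_{R}M$ is not legitimate: it divides the generic rank taken for one $R$-structure by $\rank_{R}M$ as though the other structure were being restricted. Computing correctly, $F^{*}M$ is generically $K^{1/p}\otimes_{K}L$ (with $K$, $L$ the fraction fields of $R$, $M$), of dimension $[L:K]=\rank_{R}M$ over $K^{1/p}$ but of dimension $[K^{1/p}:K]=\rank_{R}R^{1/p}>1$ over $L$; so $F^{\times}M$ has rank $\rank_{R}R^{1/p}$, not $1$, over $M$. For instance, $R=k[[x^{p},y]]\subset M=k[[x,y]]$ gives $F^{*}M\cong R^{1/p}[\epsilon]/(\epsilon^{p})\cong (R^{1/p})^{\oplus p}$, already reflexive and of rank $p^{2}$ over $M$. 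The same confusion breaks your last paragraph: the ring through which both relevant actions on $F^{\times}M$ factor is (the image of) $R^{1/p}\otimes_{R}M$, which is in general neither normal nor even reduced, so reflexivity cannot be transferred between $R$ and $M$ in the way you invoke, and $F^{\times}M$ is not naturally a module over the regular local ring $M^{1/p}$ (only over $R^{1/p}\cdot M$, which is strictly smaller whenever $R\subset M$ is inseparable). The claimed isomorphism $F^{\times}M\cong M$ and the assertion that the hypotheses are superfluous are therefore unsubstantiated.

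For contrast, the paper's proof tensors $M$ with $M^{1/p}$ rather than with $R^{1/p}$ and uses both hypotheses: CM-ness of $E=\Hom_{R}(M,M)$ gives, via Lemma \ref{lem-Hom-tensor-MCM}, that $(M\otimes_{R}M)^{\refl}$ is CM, whence $(M^{1/p}\otimes_{R}M)^{\refl}$ is a CM, hence free, module over the regular local ring $M^{1/p}$ --- this is the correct replacement for your ``rank-one reflexive over a UFD'' step; purity of $R\subset M$ then splits $F^{\times}M=(R^{1/p}\otimes_{R}M)^{\refl}$ off as a direct summand, giving $F^{\times}M\in\add({}_{R^{1/p}}M^{1/p})$. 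Your reduction of the second assertion to Proposition \ref{prop-Ftimes-closed->preserves-projectives} agrees with the paper and is fine; only the proof of $F^{\times}$-closedness needs to be replaced.
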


\begin{proof}
From Lemma \ref{lem-Hom-tensor-MCM}, $(M \otimes _{R}M )^{\refl}$ is also CM.
Since $M^{1/p}$ is a free $M$-module, $(M^{1/p} \otimes _{R}M )^{\refl}$ is also CM and 
a free $M^{1/p}$-module. Therefore $_{R^{1/p}}(M^{1/p} \otimes _{R}M )^{\refl}\in \add(_{R^{1/p}}M^{1/p})$.
Since $_{R^{1/p}}(R^{1/p} \otimes _{R}M )^{\refl}$ is a direct summand  of $_{R^{1/p}}(M^{1/p} \otimes _{R}M )^{\refl}$,
we have $_{R^{1/p}}(R^{1/p} \otimes _{R}M )^{\refl}\in \add(_{R^{1/p}}M^{1/p})$,
which saids that $M$ is $F^{\times}$-closed.
From Proposition \ref{prop-Ftimes-closed->preserves-projectives},
$\bF^{*}$ preserves projectives.
\end{proof}

Since $M$ is naturally regarded as a subring of $E = \End_{R}(M)$,
there are forgetting functors
$\mods (E) \to \mods(M)$ and similarly $\mods (E^{1/p})\to \mods(M^{1/p})$. 

\begin{prop}[\cite{Yasuda:2009wn}]\label{prop-frobenius-compatible}
The diagram
\[
\xymatrix{  
\mods(M)\ar[r] ^{F_{M}^{*}} & \mods(M^{1/p})  \\
\mods(E) \ar[r]_{\bF^{*}} \ar[u]& \mods(E^{1/p}) \ar[u]
}
\]
is commutative. Here $F_{M} : M \hookrightarrow M^{1/p}$ is the relative Frobenius map of $M$.
\end{prop}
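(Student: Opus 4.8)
The plan is to reduce the commutativity of the square to a single isomorphism of bimodules and then feed it into the associativity of the tensor product. Concretely, for an $E$-module $A$ the lower-then-right route produces the $M^{1/p}$-module underlying $\bF^{*}(A)=H\otimes_{E}A$, with $M^{1/p}$ acting through $M^{1/p}\hookrightarrow E^{1/p}$, while the left-then-upper route produces $F_{M}^{*}({}_{M}A)=M^{1/p}\otimes_{M}A$, with $A$ viewed as an $M$-module through $M\hookrightarrow E$. So it suffices to exhibit a natural isomorphism ${}_{M^{1/p}}(H\otimes_{E}A)\cong M^{1/p}\otimes_{M}A$. I would obtain this from the stronger claim that $H\cong M^{1/p}\otimes_{M}E$ as $(M^{1/p},E)$-bimodules, where $M^{1/p}$ acts on the left tensor factor and $E$ on the right one: granting this, associativity gives $H\otimes_{E}A\cong (M^{1/p}\otimes_{M}E)\otimes_{E}A\cong M^{1/p}\otimes_{M}A$ naturally in $A$, and by construction the left $M^{1/p}$-action is carried to the $M^{1/p}$-structure used by $F_{M}^{*}$.

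First I would write down the comparison map $\Theta\colon M^{1/p}\otimes_{M}E\to H=\Hom_{R}(M,M^{1/p})$ sending $y\otimes\psi$ to the $R$-linear map $m\mapsto y\,\psi(m)$, where $y\,\psi(m)$ is the product in $M^{1/p}$ of $y$ and the image of $\psi(m)\in M$ under $M\hookrightarrow M^{1/p}$. One checks that this is well defined over $M$ (using that the subring inclusion $M\hookrightarrow E$ sends $a\in M$ to multiplication by $a$, so $(a\psi)(m)=a\psi(m)$) and that it is a map of $(M^{1/p},E)$-bimodules: the left $M^{1/p}$-action on $H$ restricted from $E^{1/p}$ is post-multiplication by $M^{1/p}$, matching the left factor, and the right $E$-action on $H$ is precomposition, matching the right factor. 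These verifications are routine bookkeeping.

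The substantive step is to prove that $\Theta$ is an isomorphism, and this is where regularity enters: since $M$ is regular and F-finite, Kunz's theorem makes $M^{1/p}$ a free $M$-module, say with basis $e_{1},\dots,e_{n}$. Then $M^{1/p}\otimes_{M}E\cong\bigoplus_{i}e_{i}\otimes E$ while $H=\Hom_{R}(M,\bigoplus_{i}Me_{i})\cong\bigoplus_{i}\Hom_{R}(M,Me_{i})$, and $\Theta$ restricts on the $i$-th summand to the isomorphism $E=\Hom_{R}(M,M)\xrightarrow{\sim}\Hom_{R}(M,Me_{i})$ induced by the $M$-linear isomorphism $M\xrightarrow{\sim}Me_{i}$, $x\mapsto e_{i}x$. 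Hence $\Theta$ is bijective, and the square commutes. The main obstacle is not any deep computation but the careful matching of the several module structures in play — distinguishing the $M$- and $E$-actions coming from the inclusion $M\hookrightarrow E$, and likewise on the $1/p$ side — so that the final isomorphism is verified to be $M^{1/p}$-linear and natural in $A$ rather than merely an abstract isomorphism of $R$-modules.
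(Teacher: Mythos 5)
Your proposal is correct and follows essentially the same route as the paper: both reduce the commutativity of the square to the single bimodule isomorphism $H=\Hom_{R}(M,M^{1/p})\cong M^{1/p}\otimes_{M}E$, obtained from the freeness of $M^{1/p}$ over the regular ring $M$. You merely spell out the explicit comparison map and the bookkeeping of module structures that the paper leaves implicit.
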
 

\begin{proof}
The two composite functors from $\mods(E)$ to $ \mods(M^{1/p})$ in the diagram 
send $_{E}E$  to $M^{1/p} \otimes_{M}E$ and $_{M^{1/p}}H$ respectively.
It suffices to show that there is a natural isomorphism between the two modules.
Since $M$ is regular and $M^{1/p}$ is a free $M$-module,
we have
\[
M^{1/p} \otimes_{M}E \cong \Hom_{R} (M,M^{1/p}) =H. 
\]
\end{proof}

\begin{prop}\label{prop-equiv-order-raising-radical}
Let $\fj$ be the Jacobson radical of $E$ and $\fm_{M}$ the maximal ideal of $M$.
Suppose that $\bF^{*}$ preserves projectives. Then $\bF^{*}|_{\proj(E)}$ is order-raising if and only if
$\fj \subset \fm_{M}E$.
\end{prop}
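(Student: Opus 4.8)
The plan is to transport the order-raising question from $E$ down to the commutative regular ring $M$ via the forgetful functor $\phi\colon\mods(E)\to\mods(M)$, and then to compare the Jacobson radical $\fj$ with powers of $\fm_M$. The decisive input is the $2$-commutativity $\phi\circ\bF^{*}=F_M^{*}\circ\phi$ of Proposition \ref{prop-frobenius-compatible}, iterated to $\phi\circ\bF^{*e}=F_M^{*e}\circ\phi$ where $F_M^{*e}$ is the $e$-fold Frobenius pullback on $M$. Since $\phi$ does not change the underlying set of the image of a morphism, every condition of the form $\Image\bF^{*e}(f)\subset\fj^{i}\bF^{*e}(Q)$ can be tested on $M$-modules, where $\bF^{*e}$ is just the ordinary Frobenius pullback on the regular ring $M$.

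First I would record a two-sided \emph{Frobenius detection lemma} for a map $g\colon N_1\to N_2$ of finitely generated $M$-modules, writing $k_M=M/\fm_M$ for the residue field. If $\Image g\subset\fm_M N_2$, then factoring $g$ through $\fm_M N_2\hookrightarrow N_2$ and using that $F_M^{*}$ is exact (Kunz \cite{Kunz:1969to}) yields $\Image F_M^{*e}(g)\subset\fm_M^{[p^{e}]}F_M^{*e}(N_2)\subset\fm_M^{p^{e}}F_M^{*e}(N_2)$; here the identity $\Image\bigl(F_M^{*e}(\fm_M N_2)\to F_M^{*e}(N_2)\bigr)=\fm_M^{[p^{e}]}F_M^{*e}(N_2)$ follows by moving $p^{e}$-th powers across the Frobenius tensor. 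Conversely, if $\Image g\not\subset\fm_M N_2$, then reducing modulo $\fm_M$ identifies the reduction of $F_M^{*e}(g)$ with the $e$-fold Frobenius twist of the nonzero $k_M$-linear map $\bar g$, which is again nonzero because base change along the field map $F^{e}\colon k_M\to k_M$ is faithful; hence $\Image F_M^{*e}(g)\not\subset\fm_M F_M^{*e}(N_2)$. I would then compare radicals: since $\fm$ is central and consists of non-units, $\fm E\subset\fj$ and $\fj^{n}\subset\fm E$ for some $n$; since $M/\fm M$ is artinian local, $\fm_M^{t}\subset\fm M$ for some $t$; and $\fm\subset\fm_M$. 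These combine to the two inclusions I need, namely $\fm_M^{t}\subset\fj$ (so $\fm_M^{ti}\subset\fj^{i}$ for every $i$) and $\fj^{n}\subset\fm E\subset\fm_M E$.

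For the \emph{if} direction, assume $\fj\subset\fm_M E$. Given $i$, any $f\colon P\to Q$ factoring through $\fj Q$ satisfies $\Image f\subset\fm_M E\,Q=\fm_M Q$, so $\phi(f)$ lands in $\fm_M\phi(Q)$; the forward detection lemma then places $\Image\bF^{*e}(f)$ inside $\fm_M^{p^{e}}\bF^{*e}(Q)$, and once $p^{e}\ge ti$ we have $\fm_M^{p^{e}}\subset\fm_M^{ti}\subset\fj^{i}$, so $\bF^{*e}(f)$ factors through $\fj^{i}\bF^{*e}(Q)$; this gives order-raising. For the \emph{only if} direction I would argue by contraposition. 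If $a\in\fj\setminus\fm_M E$, take $f$ to be right multiplication by $a$ on $E$ (it factors through $\fj E=\fj$ and is left $M$-linear), and set $i:=n$. Were $\bF^{*e}(f)$ to factor through $\fj^{n}\bF^{*e}(E)\subset\fm_M\bF^{*e}(E)$ for large $e$, this would contradict the reverse detection lemma, since $\Image\phi(f)=Ea\not\subset\fm_M E$ forces $\Image F_M^{*e}(\phi(f))\not\subset\fm_M F_M^{*e}(\phi(E))$.

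I expect the reverse detection lemma to be the crux. The point is that $F_M^{*}$ must be shown \emph{not} to spuriously pull a morphism into $\fm_M$, which is precisely where the faithfulness of the Frobenius twist over the possibly imperfect residue field $k_M$ is essential; one must also bridge carefully between the Frobenius power $\fm_M^{[p^{e}]}$, the ordinary power $\fm_M^{p^{e}}$, and $\fj^{i}$ using the radical comparisons above. The remaining steps are formal once these comparisons and the two directions of the detection lemma are in place.
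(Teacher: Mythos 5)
Your proof is correct and takes essentially the same approach as the paper's: both directions rest on Proposition \ref{prop-frobenius-compatible} together with the comparison of $\fj$-adic and $\fm_{M}$-adic orders, and the ``only if'' direction uses the same witness, namely multiplication by an element of $\fj \setminus \fm_{M}E$. The only difference is presentational: the paper leaves your ``detection lemma'' (in particular the faithfulness of base change along $k_{M}\hookrightarrow k_{M}^{1/p^{e}}$) implicit.
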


\begin{proof}
The ``if'' part:
Let $\phi: P \to Q$ be a morphism in $\proj(E)$ which factors through $\fj Q$, and hence through $\fm_{M}Q$.
From Proposition \ref{prop-frobenius-compatible},  $(\bF^{*})^{e} (\phi) $ factors through $\fm_{M}^{[p^{e}]} (\bF^{*})^{e} (Q)$. Here $\fm_{M}^{[p^{e}]}$ is the ideal of $M$ generated by $f^{p^{e}}$, $f \in \fm_{M}$.
Let $\fm_{R}$ be the maximal ideal of $R$. For every $i>0$, there exists $n>0$ such that 
$\fj^{i}\supset \fm_{R}^{n}E $. Then for every $n>0$, there exists $e>0$ such that
$\fm_{R}^{n}E \supset \fm_{M}^{[p^{e}]}E$. 
Therefore for every $i>0$, if $e$ is sufficiently large, then $(\bF^{*})^{e} (\phi) $ factors through $\fj^{i} (\bF^{*})^{e} (Q)$.
Thus $\bF^{*}|_{\proj(E)}$ is order-raising.

The ``only if'' part: Suppose $\fj \not \subset  \fm_{M}E$. Then choose an element 
$f \in \fj \setminus \fm_{M}E$
and let $\phi:{}_{E}E \to {}_{E}E$ be the map sending $1$ to $f$. From Proposition \ref{prop-frobenius-compatible},
for every $e$, $(\bF^{*})^{e}(\phi)$ does not factor through $\fm_{M}E$. 
However if $\bF^{*}$ were order-raising, for $e \gg 0$, $(\bF^{*})^{e}(\phi)$ would factor through $\fm_{M}E$. 
Therefore $\bF^{*}$ is not order-raising.
\end{proof}

We now prove  Corollary \ref{cor-pure-subring-regular-CMEnd->NCCR}
in a different way under the additional assumption on the Jacobson radical:

\begin{cor}
If $E$ is CM, the extension $R \subset M$ is pure
 and $\fj \subset \fm_{M}E$, then $E$ has global dimension $d$ and is a NCCR.
\end{cor}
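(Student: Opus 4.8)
The plan is to assemble the three hypotheses so that all the standing assumptions of the corollary at the end of the previous subsection (the one reading ``If $M$ is strongly $F_{*}$-closed and $F^{\times}$-closed, and if $\bF^{*}|_{\proj(E)}$ is order-raising, then $E$ has finite global dimension; moreover if $E$ is CM, then its global dimension is $d$'') become available, and then simply invoke it. In other words, the entire content of the proof is a verification that the four input conditions of that corollary hold under the present hypotheses, after which the conclusion is immediate.

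First I would record that, because $M$ is here a commutative regular local ring, Lemma \ref{lem-regular-strongly-closed} applies verbatim and gives that ${}_{R}M$ is strongly $F_{*}$-closed and that $\bF^{*}$ is exact with zero kernel. This disposes of the ``strongly $F_{*}$-closed'' hypothesis and of the exactness and zero-kernel requirements for free. Next, the hypotheses that $E$ is CM and that $R \subset M$ is pure are exactly the assumptions of Proposition \ref{prop-CM-pure-Ftimes-closed}, which yields that $M$ is $F^{\times}$-closed and, crucially, that $\bF^{*}$ preserves projectives. The latter is the precondition that allows the order-raising criterion to be phrased as a condition on the Jacobson radical.

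With $\bF^{*}$ known to preserve projectives, I would then appeal to Proposition \ref{prop-equiv-order-raising-radical}: the extra hypothesis $\fj \subset \fm_{M}E$ is precisely the condition there shown to be equivalent to $\bF^{*}|_{\proj(E)}$ being order-raising. Thus the remaining input condition, order-raising, is verified. At this point $M$ is strongly $F_{*}$-closed and $F^{\times}$-closed, $\bF^{*}|_{\proj(E)}$ is exact, order-raising and has zero kernel, and $E$ is CM, so the unnumbered corollary at the end of the previous subsection (equivalently Corollary \ref{cor-endofunctor-NCCR} via Propositions \ref{prop-F*-closed}, \ref{prop-str-F*-closed-zero-kernel} and \ref{prop-Ftimes-closed->preserves-projectives}) delivers $\gldim E = d$. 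Since $E$ is CM with finite global dimension, it is by definition a NCCR, which is the assertion to be proved; this reproves Corollary \ref{cor-pure-subring-regular-CMEnd->NCCR} by the Frobenius route.

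The proof is therefore essentially a bookkeeping argument, and I expect no genuine obstacle: the real mathematical work has already been carried out in Lemma \ref{lem-regular-strongly-closed}, Proposition \ref{prop-CM-pure-Ftimes-closed} and Proposition \ref{prop-equiv-order-raising-radical}. The only point deserving care is making sure the order-raising criterion is legitimately applicable, i.e.\ that $\bF^{*}$ preserves projectives \emph{before} one translates order-raising into the radical condition; but this is guaranteed by Proposition \ref{prop-CM-pure-Ftimes-closed}, so the hypotheses chain together cleanly. One should also note that the extra assumption $\fj \subset \fm_{M}E$ is exactly what makes this a \emph{different} proof rather than a restatement: it replaces the CM-type/depth-lemma mechanism of the original argument with a Frobenius-flatness mechanism, at the cost of this one structural constraint on the radical.
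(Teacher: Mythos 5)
Your proposal is correct and follows essentially the same route as the paper's own proof: invoke Proposition \ref{prop-CM-pure-Ftimes-closed} to get that $\bF^{*}$ preserves projectives, Lemma \ref{lem-regular-strongly-closed} and Proposition \ref{prop-equiv-order-raising-radical} to get that $\bF^{*}|_{\proj(E)}$ is exact, order-raising and has zero kernel, and then conclude via Corollary \ref{cor-endofunctor-NCCR}. The extra care you take in checking that projectives are preserved before translating order-raising into the radical condition is a sound (if implicit in the paper) point, but the argument is the same.
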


\begin{proof}
From Proposition \ref{prop-CM-pure-Ftimes-closed}, 
$\bF^{*}$ preserves projectives.
From Lemma \ref{lem-regular-strongly-closed} and Proposition \ref{prop-equiv-order-raising-radical},
the restricted endofunctor $\bF^{*}|_{\proj(E)}$ is exact and order-raising and has zero kernel.
Now the assertion follows from Corollary \ref{cor-endofunctor-NCCR}.
\end{proof}

\begin{prob}
If $E$ is CM and the extension $R \subset M$ is pure, then is the Jacobson radical $\fj$
of $E$ included in  $\fm_{M}E$?
\end{prob}


\subsection{Frobeniuses of skew group rings}

Let $S$ be a commutative regular local ring and $G$ a finite group acting on it.
Let 
\[
F_{S} :S \hookrightarrow S^{1/p}
\]
be the Frobenius map of $S$. Since $S$ is regular, $F_{S}$ is flat and $S^{1/p}$ is a free $S$-module.
We note that $S^{1/p}$ has a natural $G$-action such that $F_{S}$
is $G$-equivariant.   Therefore the skew group ring $S^{1/p}*G$ is also defined.

\begin{defn}\label{defn-Frobenius-for-skew}
We define the \emph{Frobenius map} of $S*G$ just as the inclusion map 
\[
\bF : S*G \hookrightarrow S^{1/p}*G,
\]
by which $S*G$ becomes a subring of $S^{1/p}*G$.
Accordingly we define the \emph{Frobenius pullback and pushforward functors}
\begin{align*}
\bF^{*}:\mods( S*G) \to \mods (S^{1/p}*G), \ A \mapsto S^{1/p}*G \otimes_{S*G}A ,  \\
\bF_{*} :  \mods (S^{1/p}*G )\to \mods (S * G ) , \ {}_{S^{1/p}*G}A \mapsto {}_{S*G}A.
\end{align*}
\end{defn}

The following proposition is a direct consequence of the definition:

\begin{prop}\label{skew-Frob-exact-preserve-proj-zero-kernel}
$\bF^{*}$ is exact, preserves projectives and has zero kernel.
Also $\bF_{*}$ is exact. Furthermore the diagram
\[
\xymatrix{  
\mods(S)\ar[r] ^{F^{*}_{S}} & \mods(S^{1/p})  \\
\mods(S*G) \ar[r]_{\bF^{*}} \ar[u]& \mods(S^{1/p}*G) \ar[u]
}
\]
is (2-)commutative.
\end{prop}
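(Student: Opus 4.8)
The plan is to reduce every assertion to the single structural fact that $S^{1/p}*G$ is a \emph{free} module over $S*G$, which is forced by $S^{1/p}$ being a free $S$-module together with the $G$-equivariance of $F_{S}$. First I would fix a free $S$-basis $b_{1},\dots,b_{n}$ of $S^{1/p}$ and check that $b_{1}e,\dots,b_{n}e$, where $e\in G$ is the unit, is a free \emph{right} $S*G$-basis of $S^{1/p}*G$: writing an arbitrary element $\sum_{g}a_{g}g$ with $a_{g}=\sum_{i}s_{g,i}b_{i}$ and $s_{g,i}\in S$, and using commutativity of $S^{1/p}$ to move the $S$-scalars past the $b_{i}$, one rewrites it uniquely as $\sum_{i}(b_{i}e)\cdot\bigl(\sum_{g}s_{g,i}g\bigr)$ with $\sum_{g}s_{g,i}g\in S*G$. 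This is the only computation of substance, and everything else is formal.

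Granting right-freeness, the $\bF^{*}$-claims are immediate. The functor $\bF^{*}=S^{1/p}*G\otimes_{S*G}-$ is tensoring with a free, hence faithfully flat, right $S*G$-module; faithful flatness delivers at once that $\bF^{*}$ is exact and that it has zero kernel, i.e.\ $\bF^{*}(A)\cong 0$ forces $A\cong 0$. That $\bF^{*}$ preserves projectives is then formal: $\bF^{*}(S*G)=S^{1/p}*G$ is free over $S^{1/p}*G$, and $\bF^{*}$ is additive and commutes with direct sums, so it carries direct summands of free modules to direct summands of free modules. For $\bF_{*}$ I would simply observe that it is restriction of scalars along the inclusion $\bF:S*G\hookrightarrow S^{1/p}*G$; restriction of scalars is always exact, since exactness is detected on the underlying $R$-modules, so this needs no further input.

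The remaining point is the commutativity of the square, for which I would exhibit a natural isomorphism rather than an equality. The crux is the $(S^{1/p},S*G)$-bimodule isomorphism
\[
S^{1/p}\otimes_{S}(S*G)\;\xrightarrow{\ \sim\ }\;S^{1/p}*G,\qquad a\otimes(sg)\longmapsto as\cdot g,
\]
which one verifies is well defined over $\otimes_{S}$, left $S^{1/p}$-linear, and right $S*G$-linear using the skew rule $(ag)(s'g')=a\,g(s')\,gg'$ and commutativity of $S^{1/p}$; it is bijective because it matches the bases $b_{i}\otimes g\leftrightarrow b_{i}g$. Tensoring with a left $S*G$-module $A$ and using associativity then yields natural isomorphisms of $S^{1/p}$-modules
\[
\bF^{*}(A)=S^{1/p}*G\otimes_{S*G}A\cong\bigl(S^{1/p}\otimes_{S}(S*G)\bigr)\otimes_{S*G}A\cong S^{1/p}\otimes_{S}A,
\]
and the right-hand side is exactly $F_{S}^{*}$ applied to the restriction of $A$ to $S$. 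Tracking the induced $S^{1/p}$-structures shows that this agrees with first applying $\bF^{*}$ and then forgetting to $\mods(S^{1/p})$, which is precisely the asserted (2-)commutativity.

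I do not expect a genuine obstacle: as stated, the proposition is a direct consequence of the definitions. The only place that demands care is the bookkeeping with the skew multiplication. Because the group elements twist the scalars, one must consistently push $S$-scalars past basis elements, which is legitimate by commutativity of $S^{1/p}$, and keep the $G$-equivariance of $F_{S}$ in view when confirming that $\{b_{i}e\}$ really is a right $S*G$-basis and that the map above is a bimodule morphism. Once that bookkeeping is settled, freeness, faithful flatness, and the bimodule identification together furnish all the stated properties.
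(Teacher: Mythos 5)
Your proposal is correct and fills in, in the natural way, exactly the verification the paper omits (the paper simply asserts the proposition is ``a direct consequence of the definition''): right-freeness of $S^{1/p}*G$ over $S*G$ gives exactness, zero kernel, and preservation of projectives for $\bF^{*}$, restriction of scalars gives exactness of $\bF_{*}$, and the bimodule isomorphism $S^{1/p}\otimes_{S}(S*G)\cong S^{1/p}*G$ gives the $2$-commutativity. No gaps; this is essentially the intended argument.
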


\begin{cor}\label{cor-skew-seven-equivalent}
Let $\fn$ be the maximal ideal of $S$. 
Suppose that $G \subset \Aut (S)$, that
$G$ acts trivially on the residue field $S/\fn$ and that $S$ is unramified over $R=S^{G}$ in codimension one.
Then
the following are equivalent:
\begin{enumerate}
\item The five equivalent conditions in Corollary \ref{cor-byproduct} hold.
\item The Jacobson radical of $S*G$ is $ \fn *G $.
\item $\bF^{*}$ is order-raising.
\end{enumerate}
\end{cor}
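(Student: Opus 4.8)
The plan is to route everything through the two–sided ideal $\fn * G$ and its quotient $k[G]=(S*G)/(\fn*G)$, where $k=S/\fn$; the hypothesis that $G$ acts trivially on the residue field is exactly what makes this quotient the \emph{ordinary} group algebra, so that Maschke's theorem can govern semisimplicity. I would prove the chain $(1)\Leftrightarrow(2)\Leftrightarrow(3)$.

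First I would establish the ring–theoretic backbone. Since every $g\in G$ stabilizes $\fn$, one checks $\Lambda\fn=\fn\Lambda=\fn*G$ and hence $(\fn*G)^{i}=\fn^{i}\Lambda=\fn^{i}*G$, where $\Lambda:=S*G$. Because $S/\fm S$ is Artinian ($\fm$ the maximal ideal of $R$), there is $N$ with $\fn^{N}\subseteq\fm S$, so $(\fn*G)^{N}\subseteq\fm\Lambda\subseteq\fj$, the last inclusion being standard for a module–finite algebra over a local ring. As $\Lambda/\fj$ is semisimple, the image of the nilpotent–modulo–$\fj$ ideal $\fn*G$ must vanish, so $\fn*G\subseteq\fj$ always, and therefore $\fj/(\fn*G)=\fj(k[G])$. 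This yields $(1)\Leftrightarrow(2)$ at once: $\fj=\fn*G$ iff $\fj(k[G])=0$ iff $k[G]$ is semisimple iff $p\nmid\sharp G$, which is the first of the five equivalent conditions of Corollary \ref{cor-byproduct}.

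For $(2)\Leftrightarrow(3)$ I would imitate the proof of Proposition \ref{prop-equiv-order-raising-radical}, using in place of the earlier compatibility the square of Proposition \ref{skew-Frob-exact-preserve-proj-zero-kernel}, which says $\bF^{*}$ is exact, preserves projectives, and restricts on underlying $S$–modules to $F_{S}^{*}$. For $(2)\Rightarrow(3)$: when $\fj=\fn\Lambda$, a morphism factoring through $\fj Q$ factors through $\fn Q$, so $(\bF^{*})^{e}$ of it factors through $\fn^{[p^{e}]}(\bF^{*})^{e}(Q)$; since $\fj^{i}=\fn^{i}\Lambda$ and $\fn^{[p^{e}]}\subseteq\fn^{i}$ once $p^{e}\ge i$, this lands in $\fj^{i}(\bF^{*})^{e}(Q)$, giving the order–raising property. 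For $(3)\Rightarrow(2)$ I argue contrapositively: if $\fj\neq\fn*G$ then $\fj\supsetneq\fn\Lambda$, so pick $f\in\fj\setminus\fn\Lambda$ and let $\phi\colon\Lambda\to\Lambda$ send $1$ to $f$. It factors through $\fj\Lambda$, yet modulo $\fn\Lambda=k[G]$ it is multiplication by $\bar f\neq0$; since $(\bF^{*})^{e}$ reduces to the faithfully flat base change $k^{1/p^{e}}\otimes_{k}-$, the map $(\bF^{*})^{e}(\phi)$ is nonzero modulo $\fn\Lambda$ for every $e$. But order–raising with $i=N$ (recall $\fj^{N}\subseteq\fm\Lambda\subseteq\fn\Lambda$) would force $(\bF^{*})^{e}(\phi)$ to factor through $\fn\Lambda\,(\bF^{*})^{e}(\Lambda)$, i.e.\ to vanish modulo $\fn\Lambda$, for $e\gg0$ — a contradiction. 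Hence $\bF^{*}$ is not order–raising.

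The main obstacle I anticipate lies in the first, purely ring–theoretic step: proving $\fn*G\subseteq\fj$ and identifying $\fj/(\fn*G)$ with $\fj(k[G])$ cleanly, since this is where the structure of the skew group ring must be handled with care. The two places where the standing hypotheses are genuinely used are the identification of the residue quotient with the ordinary group algebra $k[G]$ (requiring the trivial action on $k$) and the faithfulness of Frobenius pullback modulo $\fn$ even for imperfect $k$ (handled because $k\hookrightarrow k^{1/p}$ is a field extension, hence faithfully flat). Once these are in place, the remaining implications are formal.
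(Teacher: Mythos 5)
Your proof is correct, and it closes the cycle of equivalences along a genuinely different path from the paper. For $(1)\Leftrightarrow(2)$ the paper simply cites Villamayor's theorem (for the direction it needs); you instead reprove it from scratch by showing $\fn *G\subseteq\fj$ (via $(\fn*G)^{N}\subseteq\fm\Lambda\subseteq\fj$ and semisimplicity of $\Lambda/\fj$), identifying $\Lambda/(\fn*G)$ with the ordinary group algebra $k[G]$, and invoking Maschke --- which is exactly the content of the cited references, so this is a welcome self-contained substitute. Your $(2)\Rightarrow(3)$ coincides with the paper's (both imitate the ``if'' part of Proposition \ref{prop-equiv-order-raising-radical} using the compatibility square of Proposition \ref{skew-Frob-exact-preserve-proj-zero-kernel}). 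The real divergence is the closing implication: the paper proves $(3)\Rightarrow(1)$ by feeding exactness, preservation of projectives, zero kernel and order-raising into Corollary \ref{cor-endofunctor-NCCR}, i.e.\ through the noncommutative Koh--Lee machinery, concluding finite global dimension; you instead prove $(3)\Rightarrow(2)$ contrapositively by exhibiting, for $f\in\fj\setminus\fn\Lambda$, a right-multiplication map whose Frobenius pullbacks stay nonzero modulo $\fn$ because $k\hookrightarrow k^{1/p^{e}}$ is faithfully flat. Your route is more elementary and keeps the whole corollary independent of Section \ref{sec-NC-Koh-Lee}; the paper's route is shorter on the page and illustrates the intended application of Corollary \ref{cor-endofunctor-NCCR}. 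All the delicate points (stability of $\fn$ under $G$, the identification $\fj^{i}=\fn^{i}\Lambda$ when $\fj=\fn*G$, injectivity of $k[G]\to k^{1/p^{e}}[G]$) are handled correctly.
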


\begin{proof}
(1)$\Rightarrow$(2): Villamayor's theorem \cite{Villamayor:1958tc} (see also \cite{Passman:1983ue}).

(2)$\Rightarrow$(3): Similar to Proposition \ref{prop-equiv-order-raising-radical}.

(3)$\Rightarrow$(1): This follows from Proposition \ref{skew-Frob-exact-preserve-proj-zero-kernel} and
Cororally \ref{cor-endofunctor-NCCR}.
\end{proof}

\subsection{Agreement of the two definitions of noncommutative Frobenius}

\begin{prop}\label{prop-Frobs-coincide}
Let $S$ be a commutative regular local ring and $G \subset \Aut(S)$  a finite group 
of automorphisms of $S$.
Suppose that $S$ is unramified over $R=S^{G}$ in codimension one.
Then with identifications $S*G = \End_{R}(S)$ and $S^{1/p}*G=\End_{R^{1/p}}(S^{1/p})$,
the Frobenius morphisms of $S*G$ and $\End_{R}(S)$
in Definitions \ref{defn-Frobenius-for-end} and \ref{defn-Frobenius-for-skew} coincide.
\end{prop}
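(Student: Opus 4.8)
The plan is to reduce the agreement of the two Frobenius morphisms to a single bimodule isomorphism. In Definition \ref{defn-Frobenius-for-end} applied to $M = S$, the functor $\bF^{*}$ is $H \otimes_{E} -$ with $H = \Hom_{R}(S, S^{1/p})$ regarded as an $(E^{1/p}, E)$-bimodule, where $E = \End_{R}(S) = S*G$ and $E^{1/p} = \End_{R^{1/p}}(S^{1/p}) = S^{1/p}*G$; the left action is post-composition and the right action pre-composition. In Definition \ref{defn-Frobenius-for-skew} the functor $\bF^{*}$ is $(S^{1/p}*G) \otimes_{S*G} -$, where $S^{1/p}*G$ is the $(S^{1/p}*G, S*G)$-bimodule obtained from the Frobenius inclusion $\bF$. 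Since in both setups $\bF_{*}$ is the right adjoint of $\bF^{*}$ (restriction of scalars along $\bF$ is exactly $\Hom_{S^{1/p}*G}(S^{1/p}*G, -)$), it suffices to produce an isomorphism of $(S^{1/p}*G, S*G)$-bimodules $S^{1/p}*G \cong H$; the two $\bF_{*}$ then agree by uniqueness of adjoints.

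First I would write down the natural candidate
\[
\Phi : S^{1/p}*G \to \Hom_{R}(S, S^{1/p}), \quad \sum_{g} t_{g}\,g \mapsto \Big( s \mapsto \sum_{g} t_{g}\, g(s) \Big),
\]
which lands in $\Hom_{R}(S,S^{1/p})$ because $G$ fixes $R = S^{G}$. Using the multiplication $(\sum_{g} t_{g} g)(\sum_{h} s_{h} h) = \sum_{g,h} t_{g}\, g(s_{h})\,(gh)$ in the skew group ring, a direct computation of $\Phi(\beta \cdot \xi)$ and $\Phi(\xi \cdot \bF(\alpha))$ against post- and pre-composition shows that $\Phi$ respects both the left $S^{1/p}*G$- and the right $S*G$-structures. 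This is routine bookkeeping and I would confine it to a couple of lines.

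The heart of the argument is showing $\Phi$ is an isomorphism. Both sides are reflexive $R$-modules: $S^{1/p}*G \cong (S^{1/p})^{\oplus \sharp G}$ and $S^{1/p}$ is a maximal Cohen-Macaulay, hence reflexive, $R$-module, while $H = \Hom_{R}(S, S^{1/p})$ is reflexive as a $\Hom$ into a reflexive module. Thus, exactly as in the codimension-one arguments used earlier in the paper, it is enough to check that $\Phi$ is an isomorphism in codimension one. There I would use the extension-restriction adjunction $\Hom_{R}(S, S^{1/p}) \cong \Hom_{R^{1/p}}(S \otimes_{R} R^{1/p}, S^{1/p})$ together with the identification $S \otimes_{R} R^{1/p} = S^{1/p}$ in codimension one: over a height-one prime $R$ is a DVR, $S$ is finite torsion-free hence free and flat, and unramified, so $S/R$ is \'etale there; \'etale base change makes $S \otimes_{R} R^{1/p}$ normal and finite over $R^{1/p}$, and since $\mathrm{Frac}(S)/\mathrm{Frac}(R)$ is separable (indeed Galois with group $G$) its fraction field is $\mathrm{Frac}(S)\cdot \mathrm{Frac}(R)^{1/p} = \mathrm{Frac}(S^{1/p})$ by linear disjointness of separable and purely inseparable extensions, forcing $S \otimes_{R} R^{1/p} = S^{1/p}$. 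Under this identification $H$ becomes $\End_{R^{1/p}}(S^{1/p}) \cong S^{1/p}*G$ compatibly with $\Phi$, so $\Phi$ is an isomorphism in codimension one and therefore everywhere.

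Finally I would record the consequence: the bimodule isomorphism $\Phi$ yields a natural isomorphism between $(S^{1/p}*G)\otimes_{S*G} -$ and $H \otimes_{S*G} -$, i.e.\ between the two functors $\bF^{*}$, and taking right adjoints gives the agreement of the two $\bF_{*}$, which is the assertion. The step I expect to be the main obstacle is the codimension-one identification $S \otimes_{R} R^{1/p} = S^{1/p}$, which is precisely where the unramifiedness hypothesis and the separability of $S/R$ are genuinely used; by contrast the bilinearity of $\Phi$ and the passage from codimension one to everywhere via reflexivity are routine.
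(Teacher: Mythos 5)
Your proposal is correct and follows essentially the same route as the paper: both reduce the statement to the bimodule isomorphism $S^{1/p}*G \cong \Hom_{R}(S,S^{1/p})$, and both derive it from the adjunction $\Hom_{R}(S,S^{1/p})\cong\Hom_{R^{1/p}}(S\otimes_{R}R^{1/p},S^{1/p})$ together with the unramified-in-codimension-one identification $S^{1/p}\cong(S\otimes_{R}R^{1/p})^{\refl}$ and reflexivity of $S^{1/p}$. The only difference is presentational: the paper runs the chain of isomorphisms directly, while you exhibit the explicit map and verify it in codimension one before invoking reflexivity, which amounts to the same argument.
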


\begin{proof}
Frobenius morphisms are respectively given by  the bimodules 
\[
_{S^{1/p}*G}(S^{1/p}*G)_{S*G} \text{ and }
_{\End_{R^{1/p}}(S^{1/p})}\Hom_{R}(S,S^{1/p})_{\End_{R}(S)} . 
\]
Therefore it suffices to show the isomorphism of these bimodules
under the mentioned identifications of rings.
By assumption, we have an isomorphism of $(S,R^{1/p})$-bimodules, 
$S^{1/p} \cong (S \otimes _{R} R^{1/p})^{\refl}$. 
Therefore we have isomorphisms of bimodules,
\begin{align*}
S^{1/p}*G& \cong \Hom_{R^{1/p}}(S^{1/p},S^{1/p}) \\
&\cong \Hom_{R^{1/p}}( (S \otimes_{R}R^{1/p})^{\refl} ,S^{1/p}) \\
& \cong \Hom_{R^{1/p}}( S \otimes_{R}R^{1/p},S^{1/p}) \\
&\cong \Hom_{R}( S,S^{1/p})
\end{align*} 
We have completed the proof.
\end{proof}


\begin{thebibliography}{ARS}

\bibitem[AF]{Anderson:1992vt}
Frank~W. Anderson and Kent~R. Fuller.
\newblock {\em {Rings and categories of modules}}, volume~13 of {\em Graduate
  Texts in Mathematics}.
\newblock Springer-Verlag, New York, second edition, 1992.

\bibitem[ARS]{Auslander:1997uh}
Maurice Auslander, Idun Reiten, and Sverre O.\ Smal{\o}
\newblock {\em {Representation theory of Artin algebras}}, volume~36 of {\em
  Cambridge Studies in Advanced Mathematics}.
\newblock Cambridge University Press, 1997.


\bibitem[Au]{Auslander:1962wi}
Maurice Auslander.
\newblock {On the purity of the branch locus}.
\newblock {\em Amer. J. Math.}, 84:116--125, 1962.

\bibitem[BO]{Bondal:2002ue}
A.~Bondal and D.~Orlov.
\newblock {Derived categories of coherent sheaves}.
\newblock In {\em Proceedings of the International Congress of Mathematicians,
  Vol. II (Beijing, 2002)}, pages 47--56, Beijing, 2002. Higher Ed. Press.

\bibitem[Be]{Bervsteuin:1958wx}
Isra{\"e}l Ber{\v{s}}te{\u\i}n.
\newblock {On the dimension of modules and algebras. IX. Direct limits}.
\newblock {\em Nagoya Math. J.}, 13:83--84, 1958.

\bibitem[Da]{Dao:2010vd}
Hailong Dao.
\newblock {Remarks on non-commutative crepant resolutions of complete
  intersections}.
\newblock {\em Adv. Math.}, 224(3):1021--1030, 2010.

\bibitem[ES]{Ellingsrud:1980wy}
Geir Ellingsrud and Tor Skjelbred.
\newblock {Profondeur d'anneaux d'invariants en caract\'eristique $p$}.
\newblock {\em Compos. Math.}, 41(2):233--244, 1980.

\bibitem[Ha]{Anonymous:xoYdZhOA}
Nobuo Hara.
\newblock {F-blowups of F-regular surface singularities}.
\newblock to appear in {\em Proc.\ Amer.\ Math.\ Soc.}

\bibitem[He]{Herzog:1974vk}
J{\"u}rgen Herzog.
\newblock {Ringe der Charakteristik $p$ und Frobeniusfunktoren}.
\newblock {\em Math. Z.}, 140:67--78, 1974.

\bibitem[HH]{Hochster:1989tk}
Melvin Hochster and Craig Huneke.
\newblock {Tight closure and strong $F$-regularity}.
\newblock {\em M\'em. Soc. Math. France (N.S.)}, (38):119--133, 1989.

\bibitem[Hu]{Huneke:1996tr}
Craig Huneke.
\newblock {\em {Tight closure and its applications}}, volume~88 of {\em CBMS
  Regional Conference Series in Mathematics}.
\newblock American Mathematical Society, 1996.


\bibitem[IR]{Iyama:2008io}
Osamu Iyama and Idun Reiten.
\newblock {Fomin-Zelevinsky mutation and tilting modules over Calabi-Yau
  algebras}.
\newblock {\em Amer. J. Math.}, 130(4):1087--1149, 2008.

\bibitem[KL]{Koh:1998cp}
Jee Koh and Kisuk Lee.
\newblock {Some restrictions on the maps in minimal resolutions}.
\newblock {\em J. Algebra}, 202(2):671--689, 1998.

\bibitem[Ku]{Kunz:1969to}
Ernst Kunz.
\newblock {Characterizations of regular local rings for characteristic
  $p$}.
\newblock {\em Amer. J. Math.}, 91:772--784, 1969.

\bibitem[La1]{Lam:1999tn}
T.~Y. Lam.
\newblock {\em {Lectures on modules and rings}}, volume 189 of {\em Graduate
  Texts in Mathematics}.
\newblock Springer-Verlag, New York, 1999.

\bibitem[La2]{Lam:2001ut}
T.~Y. Lam.
\newblock {\em {A first course in noncommutative rings}}, volume 131 of {\em
  Graduate Texts in Mathematics}.
\newblock Springer-Verlag, New York, second edition, 2001.

\bibitem[Le1]{Leuschke:2007va}
Graham~J Leuschke.
\newblock {Endomorphism rings of finite global dimension}.
\newblock {\em Canad. J. Math.}, 59(2):332--342, 2007.

\bibitem[Le2]{Leuschke:2011te}
Graham~J Leuschke.
\newblock {Non-commutative crepant resolutions: scenes from categorical
  geometry}.
\newblock arXiv:1103.5380

\bibitem[MR]{McConnell:2001vd}
J.~C. McConnell and J.~C. Robson.
\newblock {\em {Noncommutative Noetherian rings}}, volume~30 of {\em Graduate
  Studies in Mathematics}.
\newblock American Mathematical Society,  2001.

\bibitem[Pa]{Passman:1983ue}
D.S. Passman.
\newblock {It's essentially Maschke's theorem}. 
\newblock {\em Rocky Mountain J Math}, 13(1):36--54, 1983.

\bibitem[Ra]{Ramras:1969tx}
Mark Ramras.
\newblock {Maximal orders over regular local rings of dimension two}.
\newblock {\em Trans. Amer. Math. Soc.}, 142:457--479, 1969.

\bibitem[Sm]{Anonymous:OoDKDTwF}
Karen Smith.
\newblock {Brief guide to some of the literature on F-singularities}.
\newblock available at \texttt{http://www.aimath.org/WWN/singularvariety/F-sings.pdf}

\bibitem[TY]{Toda:2009bw}
Yukinobu Toda and Takehiko Yasuda.
\newblock {Noncommutative resolution, $\rm F$-blowups and
  $D$-modules}.
\newblock {\em Adv. Math.}, 222(1):318--330, 2009.

\bibitem[VdB]{VandenBergh:2002tm}
Michel Van~den Bergh.
\newblock {Non-commutative crepant resolutions}.
\newblock  {\em The legacy of Niels Henrik Abel}, 749?770, Springer, Berlin, 2004. 

\bibitem[Vi]{Villamayor:1958tc}
Orlando~E. Villamayor.
\newblock {On the semisimplicity of group algebras}.
\newblock {\em Proc. Amer. Math. Soc.}, 9:621--627, 1958.

\bibitem[Ya]{Yasuda:2009wn}
Takehiko Yasuda.
\newblock {Frobenius morphisms of noncommutative blowups}.
\newblock arXiv:0906.1662, to appear in the proceedings of the 5th Franco-Japanese symposium on singularities 2009.

\bibitem[Ye]{Yekutieli:1992th}
Amnon Yekutieli.
\newblock {An explicit construction of the Grothendieck residue complex}.
\newblock {\em Ast\'erisque}, (208):127, 1992.
\newblock With an appendix by Pramathanath Sastry.

\bibitem[Yi]{Yi:1994jl}
Zhong Yi.
\newblock {Homological dimension of skew group rings and crossed products}.
\newblock {\em J. Algebra}, 164(1):101--123, 1994.

\bibitem[Yo]{Yoshino:1990ed}
Yuji Yoshino.
\newblock {\em {Cohen-Macaulay modules over Cohen-Macaulay rings}}, volume 146
  of {\em London Mathematical Society Lecture Note Series}.
\newblock Cambridge University Press, 1990.

\end{thebibliography}
\end{document}